\documentclass{amsart}
\usepackage{xypic,tikz}

\usepackage{amssymb,
amsmath,
amsthm,
graphicx,
amscd,
multind,
eufrak,
hyperref,
}
\theoremstyle{plain}
\newtheorem{thm}{Theorem}[section]
\newtheorem{lm}[thm]{Lemma}
\newtheorem{cor}[thm]{Corollary}
\newtheorem{prop}[thm]{Proposition}
\newtheorem{lem}[thm]{Lemma}
\theoremstyle{definition}
\newtheorem{de}[thm]{Definition}
\newtheorem{ex}[thm]{Example}
\newtheorem{re}[thm]{Remark}

\newcommand{\RR}{{\mathbb R}}
\newcommand{\QQ}{{\mathbb Q}}
\newcommand{\ZZ}{{\mathbb Z}}
\newcommand{\ZZz}{{\mathbb Z}_{\geq 0}}
\newcommand{\NN}{{\mathbb N}}
\newcommand{\PP}{{\mathbb P}}
\newcommand{\pafg}[2][]{\frac{\partial #1}{\partial #2}}
{\begin{figure} \begin{center}}%
{\end{center} \end{figure}}

\def\sgn{{\rm sgn}\,}

\newcommand{\Sym}{\operatorname{Sym}}
\newcommand{\rk}{\operatorname{rk}}
\newcommand{\Gr}{\operatorname{Gr}\nolimits}
\newcommand{\bX}{\mathbf{X}}
\newcommand{\bY}{\mathbf{Y}}

\newcommand{\la}{\langle}
\newcommand{\ra}{\rangle}
\newcommand{\Wedge}{\bigwedge\nolimits}

\newcommand{\GL}{\operatorname{GL}\nolimits}

\newcommand{\biw}{{\bigwedge}}
\newcommand{\pv}{{Pl\"{u}cker variety}}
\newcommand{\pvs}{{Pl\"{u}cker varieties}}
\newcommand{\plg}{\operatorname{\mathbf{X}}}  
\newcommand{\X}{\operatorname{X}} 

\newcommand{\Pf}{\operatorname{Pf}}   
\newcommand{\dirwed}{\biw^{\frac{\infty}{2}}V_{\infty}}
\newcommand{\Xinf}{\X_{\infty}}
\newcommand{\Yinf}{Y_{\infty}}
\newcommand{\Ginf}{G_{\infty}}

\newcommand{\Projd}{\pi}
\newcommand{\rem}{\ar@{^{(}->}[r]}
\newcommand{\dem}{\ar@{^{(}->}[d]}
\newcommand{\lpr}{\ar@{->>}[l]}
\newcommand{\upr}{\ar@{->>}[u]}



\newcommand{\p}{p}

\newcommand{\Mat}{{\operatorname{Mat}}}
\newcommand{\Matp}{{(\Mat_{\NN,\NN})^{\p}}}   
\newcommand{\ma}{M}                        

\newcommand{\Matfin}{{(\Mat_{N_1,N_2})^{\p}}}

\newcommand{\Colfin}{\Mat_{N_1,n}}
\newcommand{\Colp}{\Mat_{\NN,n}}              

\newcommand{\Rowfin}{\Mat_{m,N_2}}
\newcommand{\Rowp}{\Mat_{m,\NN}}              

\newcommand{\Fin}{K^d}

\newcommand{\Tot}{A}                        
\newcommand{\x}{x}                          
\newcommand{\xma}{x_{\mathrm{ma}}}          
\newcommand{\xcol}{x_{\mathrm{col}}}          
\newcommand{\xrow}{x_{\mathrm{row}}}          

\newcommand{\Totfin}{\Tot_{\mathrm{fin}}}

\newcommand{\GLn}{\GL_\NN}                
\newcommand{\GLp}{\GL_\NN}                 
\newcommand{\GLprod}{\GLn\times\GLp}

\begin{document}

\title[Pl\"ucker varieties and higher secants of Sato's Grassmannian]{Pl\"ucker varieties and higher secants\\
of Sato's Grassmannian}

\begin{abstract}
Every Grassmannian, in its Pl\"ucker embedding, is defined by quadratic
polynomials. We prove a vast, qualitative, generalisation of this
fact to what we call {\em Pl\"ucker varieties}. A Pl\"ucker variety
is in fact a family of varieties in exterior powers of vector spaces
that, like the Grassmannian, is functorial in the vector space and
behaves well under duals. A special case of our result says that for
each fixed natural number $k$, the $k$-th secant variety of {\em
any} Pl\"ucker-embedded Grassmannian is defined in bounded degree
independent of the Grassmannian. Our approach is to take the limit of
a Pl\"ucker variety in the dual of a highly symmetric space known as
the {\em infinite wedge}, and to prove that up to symmetry the limit
is defined by finitely many polynomial equations. For this we prove
the auxilliary result that for every natural number $p$ the space
of $p$-tuples of infinite-by-infinite matrices is Noetherian modulo
row and column operations. Our results have algorithmic counterparts:
every bounded Pl\"ucker variety has a polynomial-time membership test,
and the same holds for Zariski-closed, basis-independent properties of
$p$-tuples of matrices.
\end{abstract}

\author[J.~Draisma]{Jan Draisma}
\address[Jan Draisma]{
Department of Mathematics and Computer Science\\
Technische Universiteit Eindhoven\\
P.O. Box 513, 5600 MB Eindhoven, The Netherlands;
and Vrije Universiteit and Centrum voor Wiskunde en Informatica, Amsterdam,
The Netherlands}
\thanks{Both authors are supported by the first author's
Vidi grant from
the Netherlands Organisation for Scientific Research (NWO)}
\email{j.draisma@tue.nl}

\author[R.H.~Eggermont]{Rob H. Eggermont}
\address[Rob H. Eggermont]{
Department of Mathematics and Computer Science\\
Technische Universiteit Eindhoven\\
P.O. Box 513, 5600 MB Eindhoven, The Netherlands}
\email{r.h.eggermont@tue.nl}

\maketitle


\section{Introduction and main results} \label{sec:intro}
To motivate our results, we first recall Grassmannians in their
Pl\"ucker embeddings.  For a natural number $p$ and a vector space $V$
over a field $K$, the Grassmannian of $p$-dimensional subspaces of $V$
lives in the projective space associated to the $p$-th exterior power
$\biw^p V$ of $V$. Let $\Gr(p,V) \subseteq \biw^p V$ denote the affine
cone over that Grassmannian. It consists of all {\em pure} tensors, i.e.,
those of the form $v_1 \wedge \cdots \wedge v_p$ with each $v_i \in V$.

As $p$ and $V$ vary, the varieties $\Gr(p,V)$ satisfy two fundamental
axioms. First, if $f: V \to W$ is a linear map, then the induced linear
map $\biw^pf: \biw^pV \to \biw^pW$ maps $\Gr(p,V)$ into $\Gr(p,W)$.
In particular, $\Gr(p,V)$ is stable under linear automorphisms of $V$.
Second, if $V$ has dimension $p+n$ with $p,n \geq 0$, then the natural
linear isomorphism $\biw^pV \to \biw^nV^*$ (natural, that is, up to a
scalar) maps $\Gr(p,V)$ into $\Gr(n,V^*)$. Indeed, its projectivisation
maps a point in the first Grassmannian, representing a $p$-dimensional
subspace $U$ of $V$, to the point in the second Grassmannian that
represents the annihilator $U^0$ of $U$ in $V^*$.

In this paper, we consider a general family $\{\plg_p(V) \subseteq
\biw^p V\}_{p,V}$ of closed subvarieties of exterior powers satisfying
the same two axioms. We call such a family a {\em \pv{}}; see
Section~\ref{sec:fincase} for a formal definition.  Thus a \pv{} is
not a single variety but rather a rule $\plg$ that assigns to a number
$p$ and a finite-dimensional $K$-vector space $V$ a closed subvariety
$\plg_p(V) \subseteq \Wedge^p V$, subject to the axioms above. A \pv{}
is called {\em bounded} if $\plg_2(V) \subsetneq \Wedge^2 V$ for at least
some finite-dimensional vector space $V$ (and hence, as we will see,
for all $V$ of sufficiently high dimension).

To avoid the anomaly that the Zariski topology becomes discrete, we
will assume throughout that the ground field $K$ of our vector spaces
and varieties is infinite. Our main theorem is then as follows.

\begin{thm}[Main Theorem] \label{thm:main1}
For any bounded Pl\"ucker variety $\plg$ there exists a $p_0 \in \ZZz$
and a finite-dimensional vector space $V_0$ such that all instances
$\plg_p(V)$ of $\plg$ are defined set-theoretically by polynomial
equations obtained from those of $\plg_{p_0}(V_0)$ by pulling back
along sequences of linear maps of the two types above. In particular,
$\plg_p(V)$ is defined set-theoretically by equations of bounded degree.
\end{thm}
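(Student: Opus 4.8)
The plan is to pass to a suitable limit object and exploit its symmetry together with a Noetherianity statement. Concretely, I would first observe that a Pl\"ucker variety $\plg$ is determined by its ``stable'' behaviour: because of the two axioms, the inclusions $V \hookrightarrow V \oplus K$ and the duality isomorphisms organise the varieties $\plg_p(V)$ into a single inverse/direct system, and in the limit one obtains a closed subset $\Xinf$ of the dual of Sato's infinite wedge $\dirwed$. The duality axiom is what makes the two ``halves'' of the infinite wedge interchangeable, so that $\Xinf$ is stable under a large group $\Ginf$ acting on $\dirwed$. The reduction I would carry out is: to prove the theorem it suffices to show that $\Xinf$ is cut out, inside the ambient space, by finitely many $\Ginf$-orbits of equations; pulling these finitely many equations back down the system recovers set-theoretic defining equations for every $\plg_p(V)$, and since they all descend from a single $\plg_{p_0}(V_0)$ they have bounded degree.

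Next I would set up the bridge to matrices. The coordinate ring of (an affine chart of) $\dirwed$ can be presented in terms of an infinite matrix of variables; more precisely, a point of the big cell of Sato's Grassmannian is the row span of a matrix of the shape $(I \mid M)$ with $M$ an infinite-by-infinite matrix, and Pl\"ucker coordinates become minors of this matrix. Under this dictionary, a $\Ginf$-stable closed subset of the infinite wedge corresponds to a closed subset of the space of infinite matrices that is stable under row and column operations; and a bounded Pl\"ucker variety, which by definition is a proper subvariety already in $\Wedge^2$, corresponds (after restricting the $2\times 2$ minors that vanish) to a closed condition on matrices that lies inside a proper ``bounded-rank-like'' locus. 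The role of boundedness is to guarantee that we land inside a Noetherian piece rather than in the full, non-Noetherian matrix space.

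**The key input** is then the auxiliary theorem advertised in the abstract: for every $p$, the space $\Matp$ of $p$-tuples of $\NN\times\NN$ matrices is Noetherian modulo simultaneous row and column operations, i.e.\ every descending chain of $\GLprod$-stable closed subsets stabilises, equivalently every such subset is cut out by finitely many orbits of equations. Granting this (with $p=1$ for the plain Grassmannian case, and general $p$ needed for the full Pl\"ucker-variety generality via an argument that encodes the extra structure into several matrices), the stable closed set corresponding to $\Xinf$ is defined by finitely many $\GLprod$-orbits of equations; translating back through the minors-of-$(I\mid M)$ dictionary turns these into finitely many $\Ginf$-orbits of Pl\"ucker-type equations on $\dirwed$. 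I would then check the (mostly formal) compatibility: an equation holding on $\Xinf$ restricts, under the maps of the system, to an equation holding on each finite $\plg_p(V)$, and conversely a point of some $\plg_p(V)$ not killed by these equations would lift to a point of $\Xinf$ outside the common zero set, a contradiction. Choosing $V_0$ large enough to see all finitely many orbit representatives and $p_0$ accordingly yields the statement, with the degree bound immediate.

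**The main obstacle** is the Noetherianity statement for $p$-tuples of infinite matrices modulo row and column operations: the space $\Mat_{\NN,\NN}$ is emphatically \emph{not} Noetherian without taking the quotient by $\GLprod$, and even with the group action one must argue carefully, presumably by stratifying according to the ranks of the matrices (and, for $p>1$, of suitable combinations), using the classification of $\GLprod$-orbits in bounded-rank loci, and inducting on rank while controlling how lower strata sit in the closure of higher ones. A secondary, more bookkeeping-type difficulty is making the limit construction and the descent of equations precise enough that ``set-theoretically defined by pullbacks'' is literally what one gets; this requires care with the duality axiom (to get the full $\Ginf$ rather than only the parabolic coming from the inclusions) and with the passage between projective Pl\"ucker coordinates and the affine matrix chart.
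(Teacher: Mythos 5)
Your high-level strategy is the right one and matches the paper: pass to the limit $\Xinf$ in the dual infinite wedge, reduce the theorem to showing $\Xinf$ is cut out by finitely many $\Ginf$-orbits of equations, and invoke Noetherianity of $p$-tuples of infinite matrices modulo row and column operations (Theorem~\ref{thm:main3}). You also correctly flag the descent back to finite instances as mostly bookkeeping (the paper carries this out in Section~\ref{sec:backfin}, including the refinement that the group elements can be taken in $G_{n,p}$, which is needed for the algorithmic Theorem~\ref{thm:main2}).

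However, there is a genuine gap in your proposed ``bridge to matrices.'' You identify the matrix space via the big cell of \emph{Sato's Grassmannian} (points as row spans of $(I\mid M)$, Pl\"ucker coordinates as minors). That dictionary only describes an affine chart of the Grassmannian itself, and the limit $\Xinf$ of a general bounded Pl\"ucker variety --- say a secant variety --- is not contained in Sato's Grassmannian at all, so your sentence that ``a $\Ginf$-stable closed subset of the infinite wedge corresponds to a closed subset of the space of infinite matrices'' does not hold. Boundedness cannot be used in the way you sketch (``restricting the $2\times2$ minors that vanish''); its actual role is to place $\Xinf$ inside the \emph{Pfaffian variety} $\Yinf^{r,r}$ cut out by the $\Ginf$-orbits of $\Pf_{r+1}$ and $\Pf^{\star}_{r+1}$, where $r$ is the rank of the bounded Pl\"ucker variety. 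The crucial intermediate step you are missing is a proof that $\Yinf^{r,s}$ is $\Ginf$-Noetherian. The paper does this by a double induction on $(r,s)$, writing $\Yinf^{r,s}=\Yinf^{r-1,s}\cup\Yinf^{r,s-1}\cup\Ginf Z$ where $Z$ is the locus on which both $\Pf_r$ and $\Pf^\star_s$ are nonzero. On $Z$ a Pfaffian recursion (Lemmas~\ref{lm:pfrec} and~\ref{lm:pfrecstar}) together with the derivations $\partial_{kl}$ allows one to solve for every coordinate $x_I$ as a rational function of a designated finite-plus-matrix collection of ``good'' coordinates, whose ambient space is precisely of the form $\Matp\times\Colp\times\Rowp\times K^d$ acted on by $\GLprod$. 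Without this intermediate structure --- the Pfaffian recursion and the good-coordinate embedding --- there is no functioning dictionary from $\Xinf$ to matrix tuples, and Theorem~\ref{thm:main3} cannot be brought to bear.
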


As we will see below, the class of bounded Pl\"ucker varieties is
closed under taking secant varieties. So a direct consequence of our
main theorem is the following.

\begin{cor}
For any $k$, there exists a $d$ such that for any natural number $p$
and any finite-dimensional $K$-vector space $V$, the $k$-th secant variety of
$\Gr(p,V)$ inside $\Wedge^p V$ is defined set-theoretically by polynomials
of degree at most $d$.
\end{cor}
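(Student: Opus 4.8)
The plan is to deduce the corollary from the Main Theorem by exhibiting the family of $k$-th secant varieties of Grassmannians as a single bounded Pl\"ucker variety, and then to track degrees through the two kinds of pullback that appear in Theorem~\ref{thm:main1}. So fix $k$, and for a number $p$ and a finite-dimensional $K$-vector space $V$ let $\plg_p(V) \subseteq \Wedge^p V$ be the affine cone over the $k$-th secant variety of $\Gr(p,V)$, i.e.\ the Zariski closure of the set of tensors $\omega_1 + \cdots + \omega_k$ with each $\omega_i \in \Gr(p,V)$. The first step is to verify that $\plg = \{\plg_p(V)\}_{p,V}$ satisfies the two Pl\"ucker axioms. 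For the first axiom, if $f \colon V \to W$ is linear then $\Wedge^p f$ is linear and maps $\Gr(p,V)$ into $\Gr(p,W)$ (a pure tensor is sent to a pure tensor or to $0 \in \Gr(p,W)$), so it maps the set $S$ of sums of $k$ pure tensors in $\Wedge^p V$ into the corresponding set $T$ in $\Wedge^p W$; by continuity $\Wedge^p f(\plg_p(V)) = \Wedge^p f(\overline{S}) \subseteq \overline{\Wedge^p f(S)} \subseteq \overline{T} = \plg_p(W)$. The second axiom is identical: the natural isomorphism $\Wedge^p V \to \Wedge^n V^*$ (for $\dim V = p+n$) is linear and sends $\Gr(p,V)$ into $\Gr(n,V^*)$, hence sends sums of $k$ pure tensors to sums of $k$ pure tensors, hence $\plg_p(V)$ onto $\plg_n(V^*)$.

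Second, I would check that $\plg$ is bounded. When $p = 2$ the space $\Wedge^2 V$ consists of alternating $2$-tensors, $\Gr(2,V)$ is the set of those of rank at most $2$, and the cone over its $k$-th secant variety is exactly the set of alternating $2$-tensors of rank at most $2k$; this is a proper closed subvariety of $\Wedge^2 V$ as soon as $\dim V > 2k$ (for instance it is cut out by the sub-Pfaffians of size $2k+2$). Hence $\plg_2(V) \subsetneq \Wedge^2 V$ for such $V$, so $\plg$ is a bounded Pl\"ucker variety. Alternatively, one may simply invoke the fact recorded earlier that the class of bounded Pl\"ucker varieties is closed under taking secant varieties, applied to the Grassmannian itself.

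Third, apply Theorem~\ref{thm:main1} to $\plg$: there exist $p_0$ and a finite-dimensional $V_0$ so that every $\plg_p(V)$ is cut out set-theoretically by the pullbacks, along sequences of maps of the two types above, of a fixed finite set of generators of the ideal of $\plg_{p_0}(V_0)$ (finite because the coordinate ring of $\Wedge^{p_0} V_0$ is Noetherian). Let $d$ be the largest degree among those finitely many generators. Since $\Wedge^p f$ and the duality isomorphisms are all linear, pulling back a homogeneous polynomial of degree $e$ along any composition of them gives a homogeneous polynomial of degree $e$ (or $0$); so every equation produced this way has degree at most $d$. Therefore, for all $p$ and all finite-dimensional $V$, the $k$-th secant variety of $\Gr(p,V)$ is defined set-theoretically by polynomials of degree at most $d$, with $d$ depending only on $k$; this is the assertion of the corollary.

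I expect essentially all of the difficulty to be hidden in Theorem~\ref{thm:main1}, which we take as given. The only genuinely new points in the corollary are the (easy) compatibility of the secant construction with \emph{non-injective} linear maps---one must allow pure tensors to collapse to $0$ and ranks to drop so that $\Wedge^p f$ really lands in the $k$-th secant cone---and the observation that taking Zariski closures commutes appropriately with continuous maps, so that the set-theoretic defining equations survive the passage to the closure. Neither is deep, so once the Main Theorem and the closure-under-secants statement are in place the corollary is a short deduction, and the key thing to get right is the degree bookkeeping in the last step.
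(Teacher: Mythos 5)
Your proof is correct and follows essentially the same route as the paper: observe that the $k$-th secant variety of the Grassmannian is a bounded Pl\"ucker variety (the paper records this via Example~\ref{ex:constructions} and the remark that bounded Pl\"ucker varieties are closed under joins/secants) and then invoke Theorem~\ref{thm:main1}, noting that pullback along linear maps preserves degree. The paper leaves the verification of the Pl\"ucker axioms and the boundedness/rank computation implicit, whereas you spell them out carefully, but there is no substantive difference in approach.
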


For instance, consider the Grassmannian itself. It is well known that
$\Gr(p,V)$ is defined by certain equations of degree two called Pl\"ucker
relations. But in fact, up to coordinate changes a {\em single} Pl\"ucker
relation suffices. Indeed, a set of defining equations for $\Gr(p,V)$
can be found simply by taking pullbacks of the Klein quadric defining
$\Gr(2,K^4)$ \cite{kprs}.  In general, this set does not generate
the full ideal of the Grassmannian. But it does show that one can test
membership of $\Gr(p,V)$ using a single type of equation; our result
above generalises this statement to Pl\"ucker varieties. It has the
following algorithmic consequence.

\begin{thm} \label{thm:main2}
For any bounded Pl\"ucker variety $\plg$ there exists a polynomial-time
algorithm that on input $d,p \in \ZZz$ and $\omega \in \biw^p K^d$
tests whether $\omega \in \plg_p(K^d)$.
\end{thm}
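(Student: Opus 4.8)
The plan is to turn the finiteness in Theorem~\ref{thm:main1} into an explicit, polynomially sized system of equations cutting out $\plg_p(K^d)$, and then simply to test $\omega$ against it. The first observation is that the equations produced by the Main Theorem have \emph{bounded degree}: fix $p_0$, $V_0$ and polynomials $f_1,\dots,f_m$ on $\biw^{p_0}V_0$ cutting out $\plg_{p_0}(V_0)$, of degree at most $\delta$. Each of the two admissible kinds of linear map induces on exterior powers a \emph{linear} substitution --- the duality isomorphism $\biw^a W\to\biw^{\dim W-a}W^*$ is linear, and $\biw^a g$ is linear in the exterior-power argument for fixed $g$ --- so every pullback $f_i\circ F$ along a composition $F$ of such maps is again of degree at most $\delta$. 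By Theorem~\ref{thm:main1} these pullbacks cut out $\plg_p(K^d)$ set-theoretically, so if $J\subseteq K[\biw^pK^d]_{\le\delta}$ denotes their $K$-linear span in the space of polynomials of degree at most $\delta$, then $\plg_p(K^d)$ is exactly the common zero locus of $J$. The point is that $K[\biw^pK^d]_{\le\delta}$ has dimension $\binom{\binom dp+\delta}{\delta}$, which is polynomial in $\binom dp$ and hence in the size $N$ of the input, since $\omega$ is given by its $\binom dp$ coordinates. (The cases $p\le1$ are trivial, as $\plg_p(K^d)$ is then $\{0\}$, $K$, or $\biw^pK^d$; and by first applying the explicit duality isomorphism, computable in time $\mathrm{poly}(N)$, we may assume $2p\le d$, which together with $\binom dp\le N$ forces $p=O(\log N)$ and $d=O(N)$.)

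It therefore suffices to compute a spanning set of $J$ in time $\mathrm{poly}(N)$ and then to evaluate each of its (polynomially many) members at $\omega$, accepting iff all of them vanish. To compute such a spanning set we use that the compositions $F$ occurring in Theorem~\ref{thm:main1} can be taken of bounded length, between spaces whose dimension stays polynomial in $N$, and organised into boundedly many combinatorial shapes, within each of which $F=F_{\vec\phi}$ depends polynomially on the scalar parameters $\vec\phi$ (the entries of the linear maps occurring in it); this has to be read off the limit construction underlying the Main Theorem, in which the relevant equations already live in finite-dimensional pieces of controlled size. Granting it, for each shape the assignment $\vec\phi\mapsto f_i\circ F_{\vec\phi}$ is an algebraically parametrised family of vectors in the fixed, $\mathrm{poly}(N)$-dimensional space $K[\biw^pK^d]_{\le\delta}$, and the span of such a family is already attained at finitely many parameter points: at each stage the locus of $\vec\phi$ whose value lies in the span accumulated so far is either everything or a proper subvariety of $\mathrm{poly}(N)$-bounded degree, which a $\vec\phi$ sampled from a large enough finite subset of $K$ escapes with high probability, and at most $\dim K[\biw^pK^d]_{\le\delta}=\mathrm{poly}(N)$ such enlargements occur. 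Each value $f_i\circ F_{\vec\phi}$ is computable in time $\mathrm{poly}(N)$: compose the boundedly many explicit matrices (minors of the sampled maps for the first kind, the explicit signed coordinate permutation for the second) and substitute the resulting linear forms into the fixed polynomial $f_i$.

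Assembling this, the algorithm is: reduce to $2\le p$ and $2p\le d$; for every shape and every $i$, sample enough random parameter points, collecting all the polynomials $f_i\circ F_{\vec\phi}$ into a list $S\subseteq K[\biw^pK^d]_{\le\delta}$ which with high probability spans $J$; output ``$\omega\in\plg_p(K^d)$'' iff $g(\omega)=0$ for all $g\in S$. Correctness is immediate from Theorem~\ref{thm:main1}, the error is one-sided (every member of $S$ vanishes on $\plg_p(K^d)$ automatically), and every step runs in time polynomial in $N$. The hard part is the structural input flagged above: one must extract from the proof of the Main Theorem that its sequences of linear maps can be taken of bounded length, through spaces of polynomially bounded dimension, and with boundedly many parameters of low degree --- for a Pl\"ucker variety this should follow from the combinatorial nature of the limiting equations, but it needs to be verified. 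A secondary point is derandomisation: randomness enters only in the span computation, and removing it amounts to polynomial identity testing for the explicit family $\vec\phi\mapsto f_i(F_{\vec\phi}(\omega))$, which one settles ad hoc or sidesteps entirely if the limiting equations turn out to pull back along purely combinatorial, parameter-free maps.
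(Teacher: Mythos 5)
Your proposal correctly reduces everything to one structural claim, which you flag as ``the hard part'': that the sequences of linear maps in Theorem~\ref{thm:main1} can be taken of bounded length, through spaces of polynomially bounded dimension, with controllable parametrisation. This is indeed the crux, and in the paper it is precisely the content of Lemma~\ref{lem:onestep} and Corollary~\ref{cor:boundedg}: they show that once $n_0,p_0$ are fixed as in Corollary~\ref{cor:pvlimit}, the witness group element certifying $\omega\notin X_{n,p}$ (with $n+p=d$) can always be normalised to lie in $G_{n,p}$ itself, i.e.\ to act on the $d$-dimensional space $V_{n,p}$, rather than in some much larger $G_{n',p'}$. The proof of Lemma~\ref{lem:onestep} is a concrete linear-algebra argument (decompose $g$, peel off one dimension at a time using wedging and contraction) and does not fall out of ``the combinatorial nature of the limiting equations'' for free; it is an extra step that your proposal leaves open. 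Without it, your bound on the size of the intermediate exterior powers, and hence on the number of parameters $\vec\phi$, is not established, and the ``boundedly many combinatorial shapes'' claim is not justified.

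Granting Corollary~\ref{cor:boundedg}, your approach would work but is more elaborate than necessary. The paper does not compute a spanning set of the span $J$ of pullbacks at all: since $\omega$ itself is the input, one simply picks a single random (or symbolic) isomorphism $g\colon K^d\to V_{n,p}^*$, computes $\Projd_{n_0,p_0}\Wedge^p g(\omega)$, and evaluates the fixed polynomials $f_1,\dots,f_N$ there. Corollary~\ref{cor:boundedg} guarantees that a generic $g$ yields the correct answer; derandomisation replaces the random entries of $g$ by indeterminates $g_{ij}$, and since the $f_k$ are fixed, the symbolic evaluation stays polynomially bounded. Your spanning-set construction and the accompanying random escape from subvarieties are thus a detour: one never needs a basis of $J$, only one well-chosen pullback of the given data. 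So: the high-level strategy (reduce to evaluating bounded-degree pullbacks of finitely many fixed equations) is right, but the crucial ``bounded length'' step is a nontrivial lemma you have not proved, and the span computation can be dispensed with entirely.
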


Here $\omega$ is given in a non-sparse encoding, and polynomial-time
refers to the number of arithmetic operations over the field generated by
the entries of $\omega$ (this field, unlike $K$, may be finite). Moreover,
if that field is the rational numbers or a more general number field,
then even the bit-complexity of the algorithm is polynomial.

\begin{re}
We stress that in Theorem~\ref{thm:main2} the Pl\"ucker variety $X$
is fixed (for instance, equal to {\em the third secant variety of the
Grassmannian}) and that the proof of the theorem is non-constructive in
the sense that the $p_0$ and $V_0$ from Theorem~\ref{thm:main1} have
to be built into the algorithm. We do not claim that $p_0$ and $V_0$
themselves can be found efficiently.
\end{re}

This paper is motivated by two main goals. The first is to understand
varieties built up from Grassmannians by operations such as joins, secant
varieties, and tangential varieties. All of these are examples of bounded
Pl\"ucker varieties. For instance, if $\plg,\bY$ are bounded Pl\"ucker
varieties, then the rule $\plg+\bY$ that assigns to $p,V$ the Zariski
closure of $\{x+y \mid x \in \plg_p(V), y \in \bY_p(V)\} \subseteq
\biw^pV$ is again a bounded Pl\"ucker variety, called the {\em join}
of $\plg$ and $\bY$, to which our theorem applies. In the special case
where $\bY=\plg$, the join is called the {\em secant variety} of $\plg$,
and higher secant varieties are obtained by repeatedly taking the join
with $\plg$. Quite a bit is known about the {\em dimensions} of these
secant varieties \cite{Catalisano05,Baur06}, but almost nothing is known
about their defining equations.


The second goal is to develop an exterior-power analogue of Snowden's
theory of $\Delta$-varieties \cite{Snowden13}. That theory concerns
varieties (or schemes) of ordinary tensors, rather than alternating
tensors.  For ordinary tensors, the analogues of our results are
established in \cite{Draisma11d}. Also, for ordinary tensors, many
more concrete results are known on equations for the first few higher
secant varieties \cite{Strassen83,Landsberg04,Raicu10,Qi13}. For {\em
symmetric tensors}, also quite a lot is known about equations for secant
varieties (see, e.g., \cite{Buczynska13,Landsberg11}), but we do not yet
know whether a symmetric counterpart to our Theorems~\ref{thm:main1}
and~\ref{thm:main2} exists.

The proofs of both theorems are non-constructive. In particular, we do not
find new equations for secant or tangential varieties of Grassmannians
other than pullbacks of Pfaffians. Finding explicit equations is an
art that involves sophisticated techniques from representation theory
\cite{Landsberg11,Manivel14}.  Instead, we will establish the fundamental
fact that up to symmetry, finitely many equations suffice.

The key notion in our approach is {\em Noetherianity up to symmetry}.
A topolo\-gical
space on which a group $G$ acts by means of homeomorphisms is said to
be $G$-Noetherian (or equivariantly Noetherian if $G$ is clear from
the context) if every descending chain of closed, $G$-stable subsets
stabilises. We refer to~\cite[Chapter 2]{Conca14} for a gentle
introduction. There is currently a surge of activity on related
stabilisation issues in algebraic geometry and its applications,
e.g. in algebraic statistics \cite{Hillar09,Draisma08b,Draisma12f},
invariant theory~\cite{Howard09}, representation theory~\cite{Church12},
and commutative algebra~\cite{Sam12,SamSnow12,DEKL13}. The following new
example of this phenomenon will play a fundamental role in the proofs
of our theorems, but is likely to be useful in other applications. Let
$\Mat_{\NN,\NN}$ denote the (uncountably dimensional) space of all
$\NN \times \NN$-matrices over $K$. Similarly, for $n,m \in \ZZz$ define
$\Mat_{\NN,n}$ and $\Mat_{m,\NN}$. Consider the group $\GL_\NN:=\bigcup_{n
\in \NN} \GL_n$ of all invertible matrices having zeroes almost
everywhere outside the diagonal (i.e. everywhere outside the diagonal
except in a finite number of positions) and ones almost everywhere on
the diagonal. One copy of this group acts by left multiplication on
$\Mat_{\NN, \NN}$ and $\Mat_{\NN,n}$ and trivially on $\Mat_{m,\NN}$,
and one copy acts by right multiplication on $\Mat_{\NN,\NN}$ and
$\Mat_{m,\NN}$ and trivially on $ \Mat_{\NN,n}$. For any $p,d \in \ZZz$,
consider the Cartesian product
\[
\Tot_{p,n,m,d}:=\Matp \times \Mat_{\NN,n} \times \Mat_{m,\NN} \times K^d,
\] equipped with the Zariski topology in which closed sets are given by
polynomials in the entries of the $p+2$ matrices and the coordinates
on the latter $K^d$. Let $\GL_\NN \times \GL_\NN$ act diagonally, and
trivially on $K^d$.

\begin{thm} \label{thm:main3}
For any $p,n,m,d \in \ZZz$, the topological space
$\Tot_{p,n,m,d}$ is equivariantly Noetherian with respect to
$\GL_\NN \times \GL_\NN$. In other words, every $\GL_\NN \times
\GL_\NN$-stable closed subset can be characterised as the common zero
set of finitely many $\GL_\NN \times \GL_\NN$-orbits of polynomial equations.
\end{thm}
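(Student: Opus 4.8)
The plan is to prove this by induction on $p$, the number of infinite-by-infinite matrices, after assembling a small toolkit of transfer principles for equivariant Noetherianity. Namely: (T1) a $G$-equivariant surjection $\phi\colon X\to Y$ — or, more generally, one equivariant along a surjection of groups $\widetilde G\twoheadrightarrow G$ — carries $\widetilde G$-Noetherianity of $X$ to $G$-Noetherianity of $Y$, since $W\mapsto\phi^{-1}(W)$ embeds the $G$-stable closed subsets of $Y$ into the $\widetilde G$-stable closed subsets of $X$; (T2) localization: if $X_0\subseteq X$ is closed and $G$-stable with $X_0$ $G$-Noetherian, and the complement $X\setminus X_0$ is a single $G$-orbit of a principal open set $U$ with stabilizer $P$, then $X$ is $G$-Noetherian as soon as $U$ is $P$-Noetherian, because any $G$-stable closed $Z\subseteq X$ is determined by the pair $(Z\cap X_0,\,Z\cap U)$ and the product of two posets with the descending chain condition again has it; (T3) a product of a $G$-Noetherian space and an $H$-Noetherian space is $(G\times H)$-Noetherian (the equivariant Hilbert basis theorem — this requires a short separate Noetherian-induction argument), and $K^d$ is Noetherian by the ordinary Hilbert basis theorem. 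The basic inputs are the one-matrix cases: for $M\in\Mat_{\NN,\NN}$ one has $\overline{\GL_\NN\,M\,\GL_\NN}=\{N:\rk N\le\rk M\}$, proved by projecting to finite sub-blocks where finitary row and column operations already realize every matrix of the relevant rank, so the $\GL_\NN\times\GL_\NN$-stable closed subsets of $\Mat_{\NN,\NN}$ form the chain $\{0\}\subsetneq\{\rk\le1\}\subsetneq\{\rk\le2\}\subsetneq\cdots$ and are Noetherian; likewise $\Mat_{\NN,n}$ and $\Mat_{m,\NN}$ have only finitely many equivariantly closed subsets.

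The base case $p=0$ then follows from (T3), since the two copies of $\GL_\NN$ act on the separate factors $\Mat_{\NN,n}$ and $\Mat_{m,\NN}$ and trivially on $K^d$. For the inductive step, fix $p\ge1$ and write a point as $(M_1,\dots,M_p,C,R,a)$. The locus $X_0:=\{M_1=0\}$ is a closed $\GL_\NN^2$-stable subset isomorphic, equivariantly, to $\Tot_{p-1,n,m,d}$, hence $G$-Noetherian by induction. Its complement $\{M_1\ne0\}=\bigcup_{i,j}\{(M_1)_{ij}\ne0\}$ is a single $\GL_\NN^2$-orbit of the principal open $U:=\{(M_1)_{11}\ne0\}$, because $\GL_\NN\times\GL_\NN$ contains all row and column transpositions; so by (T2) it remains to prove that $U$ is $P$-Noetherian for $P=\operatorname{Stab}_{\GL_\NN^2}(U)$. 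On $U$ I would use the invertible pivot: writing $M_1=\left(\begin{smallmatrix}\alpha&\rho\\ \gamma&D\end{smallmatrix}\right)$ in block form relative to $K^\NN=Ke_1\oplus\langle e_2,e_3,\dots\rangle$, the Schur-complement substitution $D\mapsto D':=D-\gamma\rho/\alpha$ is an automorphism of $U$ which one checks is $P$-equivariant and under which $D'$ transforms purely by the $\langle e_2,e_3,\dots\rangle$-part of the group; consequently $\{D'=0\}$ is a closed $P$-stable subset of $U$ on which $M_1=\left(\begin{smallmatrix}\alpha\\ \gamma\end{smallmatrix}\right)\!\left(\begin{smallmatrix}1&\rho/\alpha\end{smallmatrix}\right)$ is pinned down by the scalar $\alpha$, the row $\rho$ and the column $\gamma$. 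Thus $\{D'=0\}$ is equivariantly a space with only $p-1$ infinite matrices together with some extra columns, rows and finite coordinates, hence $P$-Noetherian by the induction on $p$ and the toolkit. The complement $U\setminus\{D'=0\}$ is again a single orbit of the principal open $\{(D')_{22}\ne0\}$, and iterating produces a nested sequence $U=U_1\supseteq(\text{peel }\{D'=0\})\rightsquigarrow U_2\rightsquigarrow U_3\rightsquigarrow\cdots$ in which every stage $U_k$ carries a closed $P_k$-stable subspace that is a $(p-1)$-matrix space handled by the induction; this successive peeling is exactly the rank stratification of $M_1$ by the locally closed loci $\{\rk M_1=r\}$, carried out through localizations so that finitary operations suffice.

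The hard part will be that this iteration never terminates: at every finite stage $U_k$ one is still left with a genuinely infinite matrix $D^{(k)}$, equivalently the determinantal loci $\{\rk M_1\le r\}$, although closed, are not exhaustive in $\Tot_{p,n,m,d}$ but merely Zariski-dense, so the behaviour of $M_1$ "in the infinite-rank directions" has to be controlled. The core of the proof is therefore to promote \emph{levelwise} stabilization to \emph{global} stabilization: given a descending chain of $\GL_\NN^2$-stable closed subsets, the inductive hypothesis makes it stabilize after intersecting with each finite-rank locus $\{\rk M_1\le r\}$ and after restricting to each stage $U_k$ of the peeling, and one must argue — via a compactness/diagonalization step that uses the orbit-closure structure above to recover a closed stable set from its finite-rank approximations — that the chain then stabilizes in $\Tot_{p,n,m,d}$ itself. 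Making this passage work, i.e. handling the interaction between the finite-rank strata and the limit, is the genuinely delicate point; it is also the reason the statement must allow arbitrary $n$, $m$ and $d$, since each reduction step trades one infinite matrix for additional columns, rows and finite coordinates, so the induction can only be closed if the theorem is established for all of these parameters simultaneously.
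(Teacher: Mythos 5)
Your outline has two sound ingredients that the paper also uses: an induction that reduces the number $p$ of infinite matrices, and the realization that a low-rank matrix can be traded, via a rank factorization $M=CR$, for extra columns, rows and finite coordinates. But the gap you flag in your last paragraph is real and cannot be closed with the stratification you chose, because the rank of $M_1$ alone is the wrong invariant.

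Concretely, for $p\ge 2$ the set $\{(M_1,\dots,M_p,C,R,a)\ :\ M_1=M_2\}$ is a proper, closed, $\GL_\NN\times\GL_\NN$-stable subset of $\Tot_{p,n,m,d}$ that contains points with $\rk M_1$ arbitrarily large. A descending chain of stable closed subsets can therefore have essential content entirely outside every locus $\{\rk M_1\le r\}$, which your iterated Schur-complement peeling never touches. No ``compactness/diagonalization'' step can remedy this: the problem is not a missing passage to the limit but that the strata you peel off fail to exhaust the lattice of stable closed subsets.

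The paper's fix is to use the rank of the \emph{tuple}, $\rk(M_1,\dots,M_p):=\min_{(c_1:\cdots:c_p)\in\PP^{p-1}}\rk(c_1M_1+\cdots+c_pM_p)$; in the example above this is $0$. With that invariant your factorization idea becomes Lemma~\ref{lm:finrk}: the locus $\{\rk(\xma)\le r\}$ is the image of $\Tot_{p-1,n+r,m+r,d+p^2}$ under the equivariant map that forms $p$ linear combinations of $p-1$ arbitrary infinite matrices and one low-rank matrix $C_2R_2$, with $p^2$ extra finite coordinates recording the coefficients. This handles the \emph{entire} bounded-rank locus in one inductive step; there is no infinite peeling. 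The complement is controlled by a genuine dichotomy, Proposition~\ref{prop:highrank-denseorbit}: for every finite truncation $\Totfin^{N_1,N_2}$ there is an $r$ such that any $x$ with $\rk(\xma)\ge r$ and $\xcol,\xrow$ of full rank has its $\GL_\NN\times\GL_\NN$-orbit projecting densely onto that truncation. Hence (Corollary~\ref{cor:dichot}) a stable closed subset that meets the high-rank, full-rank stratum is the full cylinder over its projection to $K^d$. The descending-chain argument then combines the bounded-rank induction with this dichotomy and ordinary Noetherianity of $K^d$ — and this is what replaces your missing step.

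A smaller caution: your T3 (a product of a $G$-Noetherian and an $H$-Noetherian space is $(G\times H)$-Noetherian) is not a formal consequence of the definitions and would need a separate argument; the paper avoids it by running a single lexicographic induction on $(p,n,m,d)$ starting from the point $\Tot_{0,0,0,0}$, so that adding columns, rows, or finite coordinates is absorbed into the same induction. Your T2, on the other hand, is fine and is essentially \cite[Lemma 5.4]{Draisma11d}, which the paper does use (in Section~\ref{sec:noeth}).
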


We do not know whether the corresponding ideal-theoretic statement
also holds, i.e., whether each $\GL_\NN \times \GL_\NN$-stable ideal
in the coordinate ring of the variety in the theorem is generated by
finitely many orbits of polynomials. We return to this question in
Section~\ref{sec:disc}.

The remainder of this paper is organised as follows. In
Section~\ref{sec:fincase}, we give a formal definition of \pvs{} and
discuss the boundedness condition.  In Section~\ref{sec:infwedge}
we introduce the {\em infinite wedge} (or rather, its charge-zero
part), and in Section~\ref{sec:limit} we construct, for any \pv{},
a limit object in the space dual to the infinite wedge. In the case
of the Grassmannian, this limit is known as (the charge-zero part
of) {\em Sato's Grassmannian} \cite{Sato83,Segal85,Alvarez98}. For
a general bounded \pv{}, the limit lies in the variety defined by
certain Pfaffians, which we describe in Section~\ref{sec:pfaff}. Then,
in Section~\ref{sec:noeth} we show that these {\em Pfaffian varieties}
are equivariantly Noetherian with respect to a group that preserves
the limit of any \pv{}. In particular, this shows that the limit
is defined by finitely many equations under that group. Finally, in
Section~\ref{sec:backfin}, we go back to finite-dimensional instances
of a \pv{} and complete the proof of Theorems~\ref{thm:main1}
and~\ref{thm:main2}. The Noetherianity in Section~\ref{sec:noeth}
is proved using the auxilliary Theorem~\ref{thm:main3}, whose proof we
defer to Section~\ref{sec:mxtuples}. Finally, in Section~\ref{sec:disc},
we discuss a number of open questions.

\section{Pl\"ucker varieties and boundedness}\label{sec:fincase}
Throughout this paper, we work over an infinite field $K$, and we use
the convention $\NN = \{1,2,3,\ldots\}$ and $[n]:=\{1,\ldots,n\}$ for
$n \in \ZZz$. By a (finite-dimensional) variety we mean a Zariski-closed
subset of a finite-dimensional vector space over $K$. Sometimes we will
stress the closedness and say {\em closed subvariety}.

If $V$ is a vector space with basis $v_1,\ldots,v_m$, then $\Wedge^p
V$ has a basis consisting of the vectors $v_I:=v_{i_1} \wedge \cdots
\wedge v_{i_p}$ where $I=\{i_1<\ldots<i_p\}$ runs over all $p$-subsets of
$[m]$. We will call this the {\em standard basis} of $\Wedge^p
V$ relative to the given basis $\{v_i\}_i$.  We always identify $(\biw^p
V)^*$ with $\biw^p (V^*)$ via the map from the latter space to the
former that sends $x_1 \wedge \cdots \wedge x_p$ to the linear function
determined by
\[ v_1\wedge \cdots \wedge v_p \mapsto \sum_{\pi \in S_p} \sgn(\pi) \prod_{i=1}^p
x_i(v_{\pi(i)}). \]
Given a vector space $V$ of dimension $p+n$ with $p,n \in
\ZZz$, and choosing an isomorphism $\psi: \biw^{p+n}V \to K$,
we obtain an isomorphism $\star: \biw^pV \to \biw^nV^*$ defined by
$\star(\omega)(\omega') = \psi(\omega \wedge \omega')$. The map $\star$
is well-defined up to choice of $\psi$. We call such a map a \emph{Hodge
dual}. Classically, Hodge duals go one step further in identifying $\biw^n
V^*$ with $\biw^n V$ by means of a symmetric bilinear form on $V$,
but we will not do this.  Note that if $\star: \biw^pV \to \biw^nV^*$
is a Hodge dual, then so is its inverse $\star^{-1}:
\biw^nV^* \to \biw^pV$ and its dual $\star^* : \biw^n V \to
\biw^p V^*$.

\begin{de} \label{de:pv}
A \emph{\pv{}} is a sequence $\plg =
(\plg_p)_{p\in\ZZz}$ of functors from the category of
finite-dimensional vector spaces to the category of
varieties satisfying the following axioms:
\begin{enumerate}
\item For all vector spaces $V$ and for all $p \in \ZZz$,
the variety $\plg_p(V)$ is a closed subvariety of $\biw^pV$.
\item For all $p \in \ZZz$ and for all linear maps $\varphi: V
\to W$, the map $\plg_p(\varphi): \plg_p(V) \to \plg_p(W)$ is the restriction of $\biw^p\varphi$ to $\plg_p(V)$.
\item If $V$ is a vector space of dimension $n+p$ with
$n,p\in\ZZz$, and $\star: \biw^pV \to \biw^nV^*$ is a Hodge
dual, then $\star$ maps $\plg_p(V)$ into $\plg_n(V^*)$.
\end{enumerate}
Given a \pv{} $\plg$, a variety of the form $\plg_p(V)$
for a specific choice of $p$ and $V$ is called an {\em
instance} of the \pv{} $\plg$.
\end{de}

\begin{ex} \label{ex:constructions}
The following constructions give a rich source of \pvs.
\begin{enumerate}
\item $\plg_p(V):=\Wedge^p V$, $\plg_p(V):=\emptyset$, $\plg_p(V):=\{0\}$
are \pvs{}.
\item $\plg_p(V):=\Gr(p,V)$ is the (cone over the) Grassmannian; we will
see that this is the smallest non-zero \pv{}.
\item Given \pvs{} $\plg$ and $\bY$, the rules $\plg \cap \bY$ and $\plg
\cup \bY$ defined in the obvious manner are \pvs{}; and
\item Similarly, the {\em join} $\plg+\bY$ and
the {\em tangential variety} $\tau \plg$ defined by
\begin{align*} &(\plg+\bY)_p(V):=\overline{\{x+y \mid x \in \plg_p(V), y
\in \bY_p(V)\}} \text{ and}\\
&(\tau \plg)_p(V):=\overline{\{x \mid x \in \ell \text{ for some line } \ell
\text{ tangent to } \bX_p(V) \text{ at a smooth
point}\}}
\end{align*}
are \pvs{}.
\hfill $\diamondsuit$
\end{enumerate}
\end{ex}

The second axiom implies that for any \pv{}, any $p \in \ZZz$ and any $V$,
the instance $\plg_p(V)$ is stable under the action of $\GL(V)$. Moreover,
it is stable under multiplication with scalars: if $\star: \biw^pV
\to \biw^nV^*$ is a Hodge dual, then for any scalar $t \in K$, we
have $t\plg_p(V) = (t\star^{-1})\star \plg_p(V) \subseteq \plg_p(V)$,
because $t\star^{-1}$ is a Hodge dual, as well. Hence, provided that it is
non-empty, $\plg_p(X)$ is the affine cone over a projective variety. To
avoid having to deal with rational maps, we work with the cone rather
than the projective variety.

When combined, the axioms for \pvs{} give further maps connecting
instances of $\plg$. The following lemmas extract two fundamental types
of such maps.

\begin{lem}[Tensoring] \label{lm:augmentation}
Let $W$ be a finite-dimensional vector space, let $V$ be a codimension-one
subspace of $W$, and write $W=V \oplus \langle w \rangle$. Then for any
\pv{} $\plg$ and any $p \in \ZZz$ with $p \leq \dim V$ the map
\[ \biw^p V \to \biw^{p+1} W,\quad \omega \mapsto \omega \wedge w  \]
maps $\plg_p(V)$ into $\plg_{p+1}(W)$.
\end{lem}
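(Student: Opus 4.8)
The plan is to exploit the fact that wedging with $w$ can be written, up to an irrelevant scalar, as a composition of maps of the two kinds that Definition~\ref{de:pv} controls. Write $n:=\dim V$, so $\dim W=n+1$ and $p\le n$. Let $\pr:W\to V$ be the projection with kernel $\langle w\rangle$, and let $\pr^*:V^*\to W^*$ be its dual, a linear inclusion. Fix Hodge duals $\star_V:\biw^pV\to\biw^{n-p}V^*$ and $\star_W:\biw^{p+1}W\to\biw^{n-p}W^*$. First I would consider the composite
\[
\plg_p(V)\xrightarrow{\star_V}\plg_{n-p}(V^*)\xrightarrow{\biw^{n-p}\pr^*}\plg_{n-p}(W^*)\xrightarrow{\star_W^{-1}}\plg_{p+1}(W),
\]
observing that every arrow lands where written: the first because $\star_V$ is a Hodge dual (and $\dim V=p+(n-p)$), so the third axiom of Definition~\ref{de:pv} applies; the second because $\biw^{n-p}\pr^*$ is induced by the linear map $\pr^*$, so the second axiom applies; and the third because $\star_W^{-1}:\biw^{n-p}W^*\to\biw^{p+1}((W^*)^*)=\biw^{p+1}W$ is a Hodge dual of the space $W^*$ (with $\dim W^*=(n-p)+(p+1)$), so the third axiom applies again. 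Hence this composite maps $\plg_p(V)$ into $\plg_{p+1}(W)$, and it only remains to see what it does to a given $\omega$.

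The one genuine computation is to check that the composite sends $\omega$ to $(-1)^{n-p}\,\omega\wedge w$; equivalently, that $\biw^{n-p}\pr^*(\star_V(\omega))=(-1)^{n-p}\,\star_W(\omega\wedge w)$ as elements of $\biw^{n-p}W^*=(\biw^{n-p}W)^*$. Both sides being linear in $\omega$, I would pair them with the standard basis of $\biw^{n-p}W$ arising from a basis $v_1,\dots,v_n$ of $V$ extended by $w$, after normalising $\psi_V(v_1\wedge\cdots\wedge v_n)=1$ and $\psi_W(v_1\wedge\cdots\wedge v_n\wedge w)=1$. Against a basis vector having $w$ as a factor both sides vanish, the left one because functionals in $\pr^*(V^*)$ annihilate $w$, the right one because a product repeating $w$ is zero; and against a basis vector $v_J$ with $J\subseteq[n]$ both sides equal $\star_V(\omega)(v_J)=\psi_V(\omega\wedge v_J)$, the sign $(-1)^{n-p}$ on the right coming from commuting $w$ past $v_J$. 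The only point needing attention is this bookkeeping of signs together with the identification $(\biw^kU)^*\cong\biw^k(U^*)$ fixed above; there is no conceptual difficulty, and the final sign is anyway irrelevant because $\plg_{p+1}(W)$, like every instance of $\plg$, is stable under scalar multiplication. (Conceptually the lemma just records that $\omega\mapsto\omega\wedge w$ is the inclusion $\biw^pV\otimes\langle w\rangle\hookrightarrow\biw^{p+1}W$, whence the name ``tensoring''.)
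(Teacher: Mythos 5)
Your proof is correct and takes essentially the same route as the paper: both factor $\omega\mapsto\omega\wedge w$ (up to a nonzero scalar) as a Hodge dual, followed by the exterior power of the inclusion $\pr^*:V^*\hookrightarrow W^*$ (which is the paper's $\varphi$, extension by zero on $\langle w\rangle$), followed by an inverse Hodge dual. The paper leaves the scalar and sign check implicit by choosing ``suitable'' Hodge duals, whereas you carry it out explicitly; the content is the same.
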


\begin{proof}
Set $n:=\dim V-p$. The map in the lemma is the composition
\begin{equation} \label{eq:littlediag}
\xymatrix{
\Wedge^p V \ar[r]^{\star_1} &
\Wedge^n V^* \ar[r]^{\biw^n \varphi} &
\Wedge^n W^* \ar[r]^{\star_2} &
\Wedge^{p+1} W
}
\end{equation}
where $\star_1,\star_2$ are suitable Hodge duals and $\varphi: V^* \to W^*$
is the map that extends a linear function by zero on $\langle w \rangle$.
\end{proof}

\begin{lem}[Contraction] \label{lm:contraction}
In the setting of the previous lemma, let $\iota$ denote the
embedding $V \to W$, so that $\iota^*:W^* \to V^*$ is
restriction of linear functions. Then
the linear map determined by
\begin{align*}
\biw^{p+1} W^* &\to \biw^p V^*\\
x_1 \wedge \cdots \wedge x_{p+1} &\mapsto
\sum_{i=1}^{p+1} (-1)^{p+1-i} x_i(w) \cdot
(\Wedge^p \iota^*)(x_1 \wedge \cdots \widehat{x_i} \cdots
\wedge x_{p+1})
\end{align*}
maps $\plg_{p+1}(W^*)$ into $\plg_p(V^*)$.
\end{lem}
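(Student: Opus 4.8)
\emph{Proof plan.} Write $c$ for the linear map displayed in the lemma, and let $t\colon\biw^p V\to\biw^{p+1}W,\ \omega\mapsto\omega\wedge w$ be the map of Lemma~\ref{lm:augmentation}. The plan has three steps: (i) show that $c$ is exactly the transpose $t^{*}$, with respect to the identifications $(\biw^k U^*)^*=\biw^k U$ fixed in Section~\ref{sec:fincase}; (ii) transpose the factorization of $t$ produced in the proof of Lemma~\ref{lm:augmentation} to get a factorization of $c$ into a Hodge dual, an exterior power of a linear map, and a Hodge dual; and (iii) conclude that $c$ maps $\plg_{p+1}(W^*)$ into $\plg_p(V^*)$ by applying axioms (3), (2), (3) of Definition~\ref{de:pv} to the three factors.

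For step (i) I would compare, for $x_1,\dots,x_{p+1}\in W^*$ and $u_1,\dots,u_p\in V$, the scalars $\langle c(x_1\wedge\cdots\wedge x_{p+1}),\,u_1\wedge\cdots\wedge u_p\rangle$ and $\langle x_1\wedge\cdots\wedge x_{p+1},\,u_1\wedge\cdots\wedge u_p\wedge w\rangle$, using that the pairing of Section~\ref{sec:fincase} is $\langle x_1\wedge\cdots\wedge x_q,\,z_1\wedge\cdots\wedge z_q\rangle=\det(x_i(z_j))_{i,j}$. Expanding the $(p{+}1)\times(p{+}1)$ determinant in the second expression along the last column (with entries $x_i(w)$) gives $\sum_{i=1}^{p+1}(-1)^{i+p+1}x_i(w)\det\big(x_{i'}(u_j)\big)_{i'\neq i,\,j\leq p}$; since $(-1)^{i+p+1}=(-1)^{p+1-i}$ and $x_{i'}(u_j)=(\iota^* x_{i'})(u_j)$ for $u_j\in V$, this equals $\langle c(x_1\wedge\cdots\wedge x_{p+1}),\,u_1\wedge\cdots\wedge u_p\rangle$. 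As decomposable tensors span and the pairing is perfect, $c=t^{*}$.

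For step (ii): the proof of Lemma~\ref{lm:augmentation} realizes $t$, with $n:=\dim V-p\geq 0$, as a composition $\star_2\circ(\biw^n\varphi)\circ\star_1$ in which $\star_1\colon\biw^p V\to\biw^n V^*$ and $\star_2\colon\biw^n W^*\to\biw^{p+1}W$ are Hodge duals and $\varphi\colon V^*\to W^*$ extends functionals by zero along $\langle w\rangle$. Transposing, and using $(\biw^n\varphi)^*=\biw^n(\varphi^*)$, the fact recorded in Section~\ref{sec:fincase} that the transpose of a Hodge dual is again a Hodge dual, and that $\varphi^*\colon W\to V$ is the projection $\pi$ with kernel $\langle w\rangle$, I obtain
\[
c \;=\; t^{*} \;=\; \star_1^{*}\circ(\biw^n\pi)\circ\star_2^{*},
\]
where $\star_2^{*}\colon\biw^{p+1}W^*\to\biw^n W$ is a Hodge dual for $W^*$ and $\star_1^{*}\colon\biw^n V\to\biw^p V^*$ is a Hodge dual for $V$. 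Step (iii) is then immediate: axiom (3) sends $\plg_{p+1}(W^*)$ into $\plg_n(W)$ under $\star_2^{*}$, axiom (2) sends $\plg_n(W)$ into $\plg_n(V)$ under $\biw^n\pi$, and axiom (3) sends $\plg_n(V)$ into $\plg_p(V^*)$ under $\star_1^{*}$; composing yields the claim.

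The only genuine work is the determinant-expansion identity of step (i) together with the bookkeeping needed to keep the identifications $(\biw^k U^*)^*=\biw^k U$ compatible with the chosen pairing throughout the transposition; conceptually there is nothing beyond Lemma~\ref{lm:augmentation}. I would also quickly check the extreme cases $p=0$ (where both $c$ and $t^{*}$ are the functional $x\mapsto x(w)$ on $W^*$) and $p=\dim V$ (where all the exterior powers involved are one-dimensional), to be sure no degeneracy is hidden.
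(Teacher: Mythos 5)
Your proposal is correct and follows essentially the same route as the paper: first identify the displayed map as the transpose of the tensoring map from Lemma~\ref{lm:augmentation} via the determinant pairing, then dualize the factorization in Diagram~\eqref{eq:littlediag} and apply the Pl\"ucker variety axioms to each factor. The paper states this much more tersely (one sentence for the pairing check, one for dualizing the diagram), but the content and the sign bookkeeping you carry out are exactly what that sentence compresses.
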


\begin{proof}
First, we claim that this map is the dual of the map in the previous
lemma. Indeed, evaluating $x_1 \wedge \cdots \wedge x_{p+1}$ on $v_1
\wedge \cdots \wedge v_p \wedge w$ yields the same as evaluating the
right-hand side above on $v_1 \wedge \cdots \wedge v_p$.  Now the
desired result follows by taking the dual maps in Diagram
\eqref{eq:littlediag}.
\end{proof}

Here is a first illustration of how a single instance of a
\pv{} may determine all of it.

\begin{lem} Let $\plg$ be a \pv{}. Then  $\plg_0(K) = \{0\}$ if and only
if $\plg_p(V) = \{0\}$ for all $p$ and $V$.
\end{lem}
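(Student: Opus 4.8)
The plan is to prove both implications, the forward one being trivial and the backward one requiring us to propagate the single vanishing $\plg_0(K)=\{0\}$ to all instances $\plg_p(V)$ using the two axioms. The ``if'' direction is immediate: if $\plg_p(V)=\{0\}$ for all $p,V$, then in particular $\plg_0(K)=\{0\}$. So the content is in the ``only if'' direction, and I would organize it around two reductions, using Lemmas~\ref{lm:augmentation} and~\ref{lm:contraction} as the workhorses.

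\emph{Step 1: from $\plg_0(K)=\{0\}$ to $\plg_p(V)=\{0\}$ when $\dim V = p$.} If $\dim V = p$, then $\biw^p V$ is one-dimensional, and a Hodge dual gives an isomorphism $\star:\biw^pV\to\biw^0 V^* = K$ carrying $\plg_p(V)$ into $\plg_0(V^*)$. Since $\plg_0$ is a functor from vector spaces to varieties with $\plg_0(V^*)$ a closed $\GL(V^*)$-stable cone in $\biw^0 V^* \cong K$, and since the one-dimensional case $\plg_0(K)=\{0\}$ is given, functoriality (axiom (2)) applied to any isomorphism $K\to V^*$ forces $\plg_0(V^*)=\{0\}$. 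Hence $\plg_p(V)=\{0\}$ whenever $\dim V = p$. (One should note $\plg_p(V)$ is nonempty here: it contains $0$ because it is a cone, so ``$\subseteq\{0\}$'' really gives ``$=\{0\}$''.)

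\emph{Step 2: from $\dim V = p$ to arbitrary $\dim V \geq p$.} Given $V$ with $\dim V = p + n$, $n \geq 0$, I want to show every $\omega \in \plg_p(V)$ is zero. The idea is to contract $\omega$ down to the equal-dimension case. Pick a codimension-one subspace $V' \subset V$; Lemma~\ref{lm:contraction} (suitably dualized, or applied after a Hodge dual) lets us push $\plg_p(V)$ into $\plg_{p-1}(V')$ — more directly, I would instead embed: actually the cleanest route is to use Lemma~\ref{lm:augmentation} in reverse. Let me argue by downward induction on $n = \dim V - p$. The base case $n=0$ is Step~1. For the inductive step, I would use the contraction map of Lemma~\ref{lm:contraction}: writing $V^* = U \oplus \langle x\rangle$ with $U$ of codimension one, the map $\biw^p V^* \to \biw^{p-1} U$ carries $\plg_p(V^*)$ into $\plg_{p-1}(U)$, where $\dim U - (p-1) = (p+n-1) - (p-1) = n$; that doesn't drop $n$. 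The map that drops $n$ is the Hodge-dual-composed contraction sending $\plg_p(V)$ into $\plg_{p+1}(W)$ — no, that raises $p$. Reconsidering: the right move is that for $\dim V = p+n > p$, by axiom (3) the Hodge dual sends $\plg_p(V)$ into $\plg_n(V^*)$ with $n < p + n = \dim V^*$, so I may as well assume $p \leq \dim V / 2$ after possibly dualizing, and then apply the contraction $\biw^p(V^*) \to \biw^{p-1}(U)$ repeatedly to reduce $p$ to $0$ while keeping $\dim$ large, landing in some $\plg_0(U')$, which is $\{0\}$ by Step~1's functoriality argument. Tracking that a nonzero $\omega$ stays nonzero under a well-chosen contraction is the key point: given $0 \neq \omega \in \plg_p(V^*) \subseteq \biw^p V^*$, there is a basis vector $x$ and a decomposition in which the component of $\omega$ involving $x$ is nonzero, so the contraction along $x$ is nonzero; this is elementary linear algebra about exterior powers.

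\emph{Main obstacle.} The only real subtlety is the bookkeeping in Step~2: ensuring that when I apply Lemma~\ref{lm:contraction} I can always choose the hyperplane $U$ so that a given nonzero $\omega$ has nonzero image, and that the dimension/degree indices line up so that the induction actually terminates at a $\plg_0$. This is purely combinatorial-linear-algebraic and poses no genuine difficulty, but it needs to be stated carefully rather than waved through. Everything else — the functoriality argument pinning down $\plg_0$ on all vector spaces from its value on $K$, and the cone property guaranteeing $0$ always lies in the instance — is routine.
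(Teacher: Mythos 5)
Your conclusion is reached, but by a more circuitous route than the paper's, and your Step~2 write-up contains false starts that are never fully cleaned up. The paper argues the contrapositive in three short moves: given $0\neq\omega\in\plg_p(V)$, pick a basis of $V$ and a $p$-subset $I$ of indices on which $\omega$ has a nonzero coordinate, project $V$ onto the coordinate subspace $W$ spanned by those $p$ basis vectors (one application of axiom~(2), landing a nonzero vector in $\plg_p(W)$ with $\dim W=p$), then apply one Hodge dual $\biw^p W\to\biw^0 W^*\cong K$ and one more instance of $\biw^0$-functoriality to reach a nonzero element of $\plg_0(K)$. You instead propose $p$ iterated applications of Lemma~\ref{lm:contraction} to drive the degree from $p$ down to $0$; that does work, since a nonzero alternating tensor always has a nonzero contraction along a suitably chosen vector, but each contraction is itself a composite of two Hodge duals and a pushforward (see its proof), so you invoke the axioms roughly $3p$ times where the paper uses them three times total. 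Moreover, the preliminary Hodge dualization to arrange $p\le\dim V/2$ is a red herring: your own bookkeeping shows that contraction fixes $n=\dim V-p$ and lowers $p$, so the induction should be on $p$, not on $n$, and no dualization is needed at the outset. Two smaller imprecisions: in Step~1 there is in general no \emph{isomorphism} $K\to V^*$ (only when $\dim V=1$); what the functoriality argument actually needs is a surjection $V^*\to K$ (whose $\biw^0$ is an isomorphism carrying $\plg_0(V^*)$ into $\plg_0(K)=\{0\}$) together with an injection $K\to V^*$ (to supply $0\in\plg_0(V^*)$). And the parenthetical ``it contains $0$ because it is a cone'' is circular, since a priori $\plg_p(V)$ could be empty; nonemptiness follows from the hypothesis by pushing $0\in\plg_0(K)$ forward through injections and Hodge duals, a point the paper's own proof is equally silent on.
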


\begin{proof}
The implication $\Leftarrow$ is immediate. For the
implication $\Rightarrow$,
pick a non-zero vector $\omega \in \plg_p(V)$, let $v_1,\ldots,v_p$ be a
basis of $V$, and assume that the coefficient in $\omega$ of the standard
basis vector $v_{i_1} \wedge \cdots \wedge v_{i_p}$ is non-zero. Set
$W:=\la v_{i_1},\ldots,v_{i_p} \ra$ and let $\pi:V \to W$ be the
projection along the remaining basis vectors. Then $\omega':=(\Wedge^p
\pi) \omega$ is a non-zero element of $\plg_p(W)$. Next, apply
$\star:\Wedge^p W \to \Wedge^0 W^*$ to $\omega'$ to obtain a non-zero
$\omega'' \in \plg_0(W^*)$. Finally, the zeroth exterior power of any
linear map $W^* \to K$ is an isomorphism and maps $\omega''$ to a non-zero
element of $\plg_0(K)$.
\end{proof}

It follows that if $\plg_0(V) = \{0\}$ for some $V$, then the full \pv{}
is zero. Moreover, since the only closed subvarieties of $V$ that are
$\GL(V)$-stable are $V$ and $\{0\}$, we find that if $\plg_1(V) \neq V$
for some $V$, then $\plg_1(V) = \{0\}$ and consequently, the full \pv{} is
zero. So only \pvs{} with $\plg_1(V)=V$ for all $V$ are of interest to us.

Next, $\GL(V)$ has exactly $\lfloor \frac{\dim V}{2} \rfloor$ orbits
on $\biw^2 V$. Indeed, any $\omega$ in this space is of the form $v_1
\wedge v_2 + \ldots + v_{2r-1} \wedge v_{2r}$ with $v_1,\ldots,v_{2r}$
linearly independent, so that $2r \leq \dim V$. The number $r$ is called
the {\em rank} of $\omega$, denoted $\rk \omega$. It is half the rank
of the skew-symmetric matrix $((x_i \wedge x_j)(\omega))_{ij}$ where $x_i,x_j$
range over a basis of $V^*$. For $r$ in this range, define
\[ Y^r(V):=\{ \omega \mid \rk(\omega) \leq r \}. \]
As we assume that $K$ is infinite, the $Y^r(V)$ are the only
Zariski-closed, $\GL(V)$-stable subsets of $\Wedge^2 V$.

\begin{lem}
Let $\plg$ be a \pv{}. Suppose that there exists a vector space $V$ such
that $\plg_2(V) = Y^r(V) \neq \biw^2V$. Then for all vector spaces $W$,
we have $\plg_2(W) = Y^r(W)$.
\end{lem}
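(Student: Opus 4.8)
The plan is to reduce the statement to a single numerical invariant. For a finite-dimensional vector space $U$, let $\rho(U)$ be the maximal rank occurring among the elements of $\plg_2(U)$. Since $\plg_2(U)$ is Zariski-closed and $\GL(U)$-stable by the second axiom, the classification of such subsets of $\biw^2 U$ recalled just before the lemma gives $\plg_2(U)=Y^{\rho(U)}(U)$. Hence it suffices to prove that $\rho(U)=r$ whenever $\dim U\ge 2r$, and that $\plg_2(U)=\biw^2 U$ (equivalently $Y^r(U)=\biw^2 U$) whenever $\dim U<2r$; this will yield $\plg_2(W)=Y^r(W)$ for every $W$.

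First I would note that $\rho(U)$ depends only on $\dim U$: any isomorphism $U\to U'$ induces a rank-preserving isomorphism $\biw^2 U\to\biw^2 U'$ carrying $\plg_2(U)$ onto $\plg_2(U')$ by functoriality, so I may write $\rho_m:=\rho(U)$ for $\dim U=m$. Then I would establish two monotonicity estimates, both valid for all $m\le m'$. First, $\rho_m\le\rho_{m'}$: an inclusion of an $m$-dimensional space into an $m'$-dimensional one embeds $\plg_2$ of the former into $\plg_2$ of the latter and preserves ranks. Second, $\min(\rho_{m'},\lfloor m/2\rfloor)\le\rho_m$: choose $\omega'\in\plg_2(U')$ of rank $\rho_{m'}$, write $\omega'=\sum_{i=1}^{\rho_{m'}}u_{2i-1}\wedge u_{2i}$ with the $u_j$ linearly independent, set $t:=\min(\rho_{m'},\lfloor m/2\rfloor)$, and apply $\biw^2\pi$ for a linear map $\pi$ that fixes $u_1,\dots,u_{2t}$ and kills $u_{2t+1},\dots,u_{2\rho_{m'}}$; by functoriality the resulting rank-$t$ bivector lies in $\plg_2$ of a $2t$-dimensional space, and since $2t\le m$ the first estimate gives $\rho_m\ge t$.

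Finally I would bring in the hypothesis. Writing $m_0:=\dim V$, from $\plg_2(V)=Y^r(V)\subsetneq\biw^2 V$ we get $\rho_{m_0}=r$ and $r<\lfloor m_0/2\rfloor$. Applying the two estimates with $m_0$ in the appropriate slot pins down $\rho_m$ for every $m$, using that a bivector on an $m$-dimensional space has rank at most $\lfloor m/2\rfloor$: if $m\ge m_0$, the second estimate forces $\min(\rho_m,\lfloor m_0/2\rfloor)\le r<\lfloor m_0/2\rfloor$, hence $\rho_m\le r$, while the first gives $\rho_m\ge r$; if $2r\le m\le m_0$, the first gives $\rho_m\le r$ and the second gives $\rho_m\ge\min(r,\lfloor m/2\rfloor)=r$; and if $m<2r$, the second gives $\rho_m\ge\lfloor m/2\rfloor$, so $\plg_2(U)=\biw^2 U=Y^r(U)$. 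In every case $\plg_2(U)=Y^r(U)$, which is the assertion for an arbitrary $W$.

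The only point requiring a little care is the second estimate: one must check that the auxiliary projection $\pi$ can indeed be prescribed to act as claimed on the chosen vectors — which is fine, since $u_1,\dots,u_{2\rho_{m'}}$ are linearly independent and extend to a basis — and that $2t\le m$ so that the first estimate applies. Everything else is bookkeeping with the rank function and the two axioms of a Pl\"ucker variety.
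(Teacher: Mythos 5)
Your proof is correct and rests on the same two observations as the paper's — functoriality along a map $W\to V$ separating linearly independent vectors rules out ranks above $r$, and functoriality along a map $V\to W$ produces every rank at most $r$ — just repackaged through the invariant $\rho_m$ with two monotonicity estimates and a case split. The paper's version is shorter (directly exhibit a rank-$(r+1)$ element, project to $V$ for a contradiction; then pull back any rank-$\le r$ element from $V$), but the content is the same.
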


\begin{proof}
Suppose that $\plg_2(W)$ contains $\omega$s with rank strictly
exceeding $r$. Since it is closed and $\GL(W)$-stable, it contains an $\omega$ of rank
equal to $r+1$. Write $\omega = w_1\wedge w_2 + \ldots + w_{2r+1}\wedge
w_{2r+2}$.  Note that $V$ has dimension at least $2r+2$, because
$Y^r(V) \neq \biw^2V$. Let $\varphi: W \to V$ be a linear map that maps
$w_1,\ldots,w_{2r+2}$ to linearly independent elements. Then
$\biw^2\varphi(\omega)$ has rank $r+1$. This gives a contradiction, since
$\biw^2\varphi(\omega) \in \plg_2(V)$. We conclude $\plg_2(W) \subseteq
Y^r(W)$.

Conversely, let $\omega \in Y^r(W)$ and write $\omega = w_1\wedge
w_2+\ldots + w_{2r'-1}\wedge w_{2r'}$ for some $r' \leq r$, with
$w_1,\ldots,w_{2r'}$ linearly independent.  Let $v_1,\ldots,v_{2r'} \in
V$ be linearly independent, and let $\varphi: V \to W$ be a linear map that
maps $v_i$ to $w_i$. We have $v_1\wedge v_2 + \ldots + v_{2r'-1}\wedge
v_{2r'} \in \plg_2(V)$, and its image under $\biw^2\varphi$ is $\omega$,
hence $\omega \in \plg_2(W)$. We conclude $\plg_2(W) = Y^r(W)$.
\end{proof}

Dually, define $Y^{r,\star}(V) := \star Y^r(V^*) \subseteq \biw^{\dim
V -2}V$. Note that $Y^{r,\star}(V)$ is independent of choice of Hodge
dual. By taking Hodge duals, we get matching statements in $\biw^{\dim V
-2}V$ for each $V$.

\begin{lem}\label{lem:starY} The only closed $\GL(V)$-stable
subvarieties of $\biw^{\dim V -2}V$ are the varieties
$Y^{r,\star}(V)$. Moreover, if $\plg$ is a \pv{}, and if $\plg_{\dim V -2}(V) = Y^{r,\star}(V) \neq \biw^{\dim V - 2}V$ for some $V$, then $\plg_{\dim W -2}(W) = Y^{r,\star}(W)$ for all $W$.
\end{lem}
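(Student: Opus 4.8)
The plan is to reduce Lemma~\ref{lem:starY} to the two lemmas about $\Wedge^2$ that precede it, using the Hodge dual as a dictionary. Set $m:=\dim V$ and write $\star: \biw^2 V^* \to \biw^{m-2} V$ for a Hodge dual; by the remarks above it is well-defined up to a scalar, and its inverse $\star^{-1}$ is again a Hodge dual. Since $\star$ is a $\GL(V)$-equivariant linear isomorphism (for the natural action of $\GL(V)$ on $V^*$ and on $\biw^{m-2}V$), it carries closed $\GL(V)$-stable subsets of $\biw^2 V^*$ bijectively onto closed $\GL(V)$-stable subsets of $\biw^{m-2}V$. By the lemma two items back (applied to $V^*$ in place of $V$), the former are exactly the varieties $Y^r(V^*)$ for $0 \le r \le \lfloor m/2 \rfloor$, and since $Y^{r,\star}(V) = \star Y^r(V^*)$ this immediately gives the first assertion.

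For the second assertion I would again transport everything across $\star$. Suppose $\plg_{m-2}(V) = Y^{r,\star}(V) \neq \biw^{m-2}V$ for some $V$ with $m = \dim V$. Applying $\star^{-1}$ (a Hodge dual $\biw^{m-2}V \to \biw^2 V^*$), Axiom~(3) of Definition~\ref{de:pv} shows that $\star^{-1}$ maps $\plg_{m-2}(V)$ into $\plg_2(V^*)$; conversely $\star$ maps $\plg_2(V^*)$ into $\plg_{m-2}(V)$. Hence $\star$ restricts to a bijection $\plg_2(V^*) \to \plg_{m-2}(V) = Y^{r,\star}(V) = \star Y^r(V^*)$, so $\plg_2(V^*) = Y^r(V^*)$, and this is $\neq \biw^2 V^*$ because $Y^{r,\star}(V)\ne\biw^{m-2}V$. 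Now the lemma two items back applies: $\plg_2(U) = Y^r(U)$ for \emph{all} vector spaces $U$. Taking $U = W^*$ for an arbitrary $W$ and transporting back along the Hodge dual $\star_W: \biw^2 W^* \to \biw^{\dim W -2}W$ — again using Axiom~(3) in both directions — gives $\plg_{\dim W -2}(W) = \star_W \plg_2(W^*) = \star_W Y^r(W^*) = Y^{r,\star}(W)$, as desired.

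The only genuinely delicate point is the bookkeeping of which maps are Hodge duals and that Axiom~(3) can be applied in both directions: one needs that $\star^{-1}$ and $\star^*$ are Hodge duals whenever $\star$ is, which is exactly what was recorded in the paragraph defining Hodge duals in Section~\ref{sec:fincase}. Everything else is a routine consequence of $\GL(V)$-equivariance of $\star$ and the already-established classification of $\GL$-stable subvarieties of $\biw^2$. So I do not expect any real obstacle; the proof is essentially a formal ``pull back along Hodge dual'' argument, and I would keep it to a few lines in the final write-up.
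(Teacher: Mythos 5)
Your argument is correct and is exactly what the paper intends: the paper dispatches this lemma with the single phrase ``By taking Hodge duals, we get matching statements,'' and your write-up simply fills in the routine transport-along-$\star$ bookkeeping. One tiny point worth being aware of, though it does not affect anything: $\star$ is $\GL(V)$-equivariant only up to the determinant character (since $\psi(g\omega\wedge g\omega')=\det(g)\,\psi(\omega\wedge\omega')$), but because $\GL(V)$-stable closed subsets are automatically cones this scalar twist is harmless and the bijection on stable closed subsets still holds.
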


\begin{de}
A \pv{} $\plg$ is called {\em bounded} if there exists some $V$ for which
$\plg_2(V) \neq \biw^2 V$. In this case, there exists a unique $r$ such
that $\plg_2(V)=Y^r(V)$ for all $V$, called the {\em rank} of the \pv{}.
\end{de}

By the above, the rank also satisfies $\plg_{\dim V-2}(V)=Y^{r,\star}(V)$
for all $V$ of dimension at least $2$. Note that the Grassmannian
is a bounded \pv{} of rank $1$, and that the constructions in
Example~\ref{ex:constructions} all preserve the class of bounded Pl\"ucker
varieties. For example, the rank of the join $\plg + \bY$ is at most
(and in fact equal to) the sum of the ranks of $\plg$ and $\bY$, and
the rank of the tangential variety $\tau \plg$, being contained in the
secant variety $\plg + \plg$, is at most twice the rank of $\plg$. This
shows that all \pvs{} of direct interest to us are bounded.

\section{The infinite wedge and its dual}
\label{sec:infwedge}

In this section we introduce the infinite wedge. We start with a
countable-dimensional vector space
\[ V_\infty=\langle \ldots,x_{-3},x_{-2},x_{-1},x_1,x_2,x_3,\ldots \rangle \]
in which the $x_i, i \in -\NN \cup \NN$ are a basis. Note that
we skip $0$, which makes the set-up symmetric around zero. In
the literature on the infinite wedge, this symmetry is achieved
by labelling with half-integers, and in {\em decreasing} order
\cite{Bloch00,Rios13}. Formulas from representation theory and integrable
systems depend on this convention. But we will not need any of those
formulas, so we take the liberty to simplify the notation and label with
$-\NN \cup \NN$ instead.

For any $n \in \ZZz$
(for ``negative'') and $p \in \ZZz$ (for ``positive'') let $V_{n,p}$
be the $(n+p)$-dimensional subspace
\[ V_{n,p}:= \langle x_{-n}, \ldots, x_{-2}, x_{-1}, x_1,
x_2, \ldots, x_p \rangle. \]
We arrange the exterior powers $\biw^p V_{n,p}$ into a
two-dimensional commutative diagram as follows.
\begin{equation} \label{eq:dirwedge}
\xymatrix{
\Wedge^0 V_{0,0} \rem \dem &
\Wedge^1 V_{0,1} \rem \dem &
\Wedge^2 V_{0,2} \rem \dem &
\ldots\\
\Wedge^0 V_{1,0} \rem \dem &
\Wedge^1 V_{1,1} \rem \dem &
\Wedge^2 V_{1,2} \rem \dem &
\ldots\\
\Wedge^0 V_{2,0} \rem \dem &
\Wedge^1 V_{2,1} \rem \dem &
\Wedge^2 V_{2,2} \rem \dem &
\ldots\\
\vdots & \vdots & \vdots &
}
\end{equation}
Here the vertical maps $\Wedge^p V_{n,p} \to \Wedge^p V_{n+1,p}$ are just
the $p$-th exterior powers of the embeddings $V_{n,p} \to V_{n+1,p}$,
and the horizontal maps $\Wedge^p V_{n,p} \to \Wedge^{p+1} V_{n,p+1}$
are given by $\omega \mapsto \omega \wedge x_{p+1}$. Note that both of
these maps are injective. We will always identify $\Wedge^p V_{n,p}$
with a subspace of $\Wedge^{p'} V_{n',p'}$ for any $n' \geq n$ and $p'
\geq p$ by means of the appropriate sequence of these maps.

If $x_I=x_{i_1} \wedge \cdots \wedge x_{i_p}$ is a standard basis
element of $\Wedge^p V_{n,p}$, then the subset $I=\{i_1<\ldots<i_p\}
\subseteq \{-n,\ldots,-1,1,\ldots,p\}$ has the property that the number
of negative elements of $I$ equals $p$ minus the number of positive
elements of $I$. Under the vertical map this property is preserved. Under
the horizontal map, $x_I$ is identified with $x_{I'}$ with $I'=I \cup
\{p+1\}$, and $I'$ again has the property that its number of negative
elements equals $p+1$ minus its number of positive elements.

\begin{de}
The {\em infinite wedge} is defined as
\[ \dirwed := \lim_{\substack{\longrightarrow \\ p,n}} \biw^p V_{n,p} =
\bigcup_{p,n} \biw^p V_{n,p}, \]
where the limit is taken of the directed system above. It comes with a
standard basis consisting of elements $x_I=x_{i_1} \wedge
x_{i_2} \wedge \cdots$ where $I=\{i_1<i_2<\ldots\}
\subseteq -\NN \cup \NN$ has the property that $i_k=k$ for $k \gg 0$;
this element is the image in $\dirwed$ of $x_{i_1} \wedge
\cdots \wedge x_{i_p}
\in \Wedge^p(V_{n,p})$ for any choice of $n \geq -i_1$ and
of $p$ such that $i_k=k$ for $k \geq p$.
\end{de}

In fact, in the existing literature on the infinite wedge, this limit
is called the {\em charge zero} part of the infinite wedge. The
full infinite wedge then arises by allowing $k-i_k$ to be any constant for
$k \gg 0$, and this constant is called the {\em charge} of $x_I$. We
will restrict ourselves to the charge-zero part.

The basis vectors $x_I$ of $\dirwed$ are in one-to-one correspondence with
the set of all {\em Young diagrams}, and this will be a useful visual
aid later on. This correspondence is well known; see for instance the
discussion of {\em Maya diagrams} and partitions in~\cite{Rios13}. To
find the Young diagram corresponding to $I=\{i_1<i_2<\ldots\}$, proceed
as follows. Draw the first quadrant $\RR_{\geq 0}^2$ with horizontal
axis subdivided into unit intervals labelled by $\NN$ and vertical
axis subdivided into unit intervals labelled by $-\NN$. Subdivide the
quadrant into diagonal strips labelled by $-\NN \cup \NN$ running from
the corresponding (horizontal or vertical) intervals in northeasterly
direction. Now draw the lattice path that starts high north on the
vertical axis and goes south in a strip corresponding to an $i \not
\in I$ and east in a strip corresponding to an $i \in I$. The fact that
$i_k=k$ for $k \gg 0$ ensures that the path ends up on the horizontal
axis. The region below the lattice path is a Young diagram, which
uniquely determines the lattice path and the set $I$. For an example
see Figure~\ref{fig:young}.

\begin{figure}
\begin{center}
\includegraphics[scale=.7]{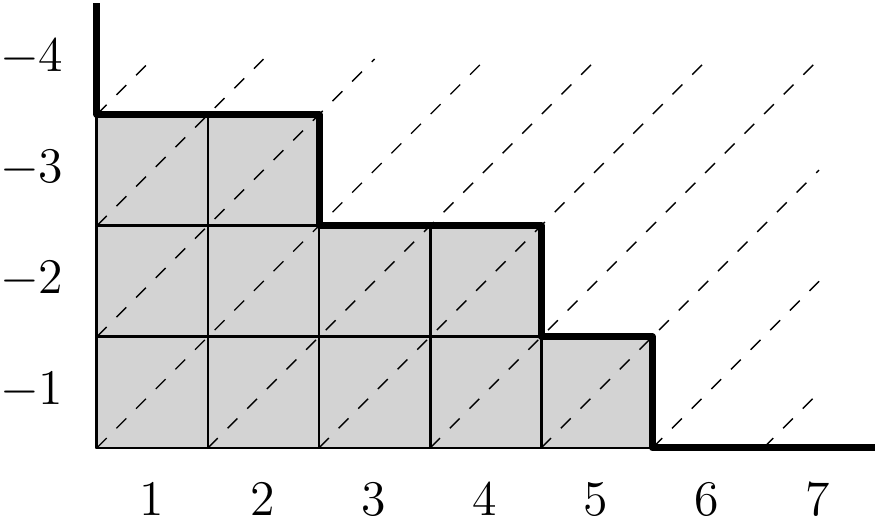}
\caption{Lattice path and Young diagram corresponding to
$I=\{-3,-2,1,2,4,6,7,8,\ldots\}$.}
\label{fig:young}
\end{center}
\end{figure}

The basis vectors $x_I$ have a natural partial order defined by $x_I
\preceq x_J$ if and only if $i_k \leq j_k$ for all $k$. This is equivalent
to the condition that the Young diagram corresponding to $I$ contains
the Young diagram corresponding to $J$. The unique largest element
has $I=\{1,2,3,\ldots\}$, and the partial order does not have infinite
strictly increasing chains. In fact, a much stronger statement holds:
the opposite partial order is a well-partial-order on the variables $x_I$
(and a similar statement holds for higher-dimensional partitions; see,
e.g., \cite{MacLagan01}), but we will not need this stronger statement.

Despite its apparent dependence on the choice of coordinates,
the infinite wedge has a large symmetry group acting on it. Indeed,
denote $G_{n,p} := \GL(V_{n,p})$ and embed $G_{n,p}$ into $G_{n+1,p}$
and $G_{n,p+1}$ by fixing the standard basis vector $x_{-(n+1)}$ and
$x_{p+1}$, respectively. Each of the two arrows emanating from $\Wedge^p
V_{n,p}$ is $G_{n,p}$-equivariant. As a consequence, the group $\Ginf
= \bigcup_{n,p}G_{n,p}$ acts on $\dirwed$. More explicitly, if an
element $g \in G_{n',p'}$ is to act on an element $\omega \in \Wedge^p
V_{n'',p''}$, then one sets $n:=\max\{n',n''\},p:=\max\{p',p''\}$, sees
$g$ as an element of $G_{n,p}$ and $\omega$ as an element in $\Wedge^p
V_{n,p}$, and performs the action there.

\begin{ex} \label{ex:derivations}
We note two consequences of the $\Ginf$-action on $\dirwed$ that will
become important later on.  First, embedding the symmetric group $S_{n+p}$
by means of permutation matrices into $G_{n,p}$, we find that the group
of all finitary permutations of $-\NN \cup \NN$, i.e., those that fix
all but a finite number of integers, acts on $\dirwed$. A finitary
permutation $\pi$ sends the basis vector $x_I$ to $\pm x_{\pi(I)}$,
where the sign depends on the number of pairs $i,j \in I$ with $i<j$
but $\pi(i)>\pi(j)$. All signed basis vectors
are contained in a single orbit under finitary permutations.

Second, the action of $\Ginf$ induces an action
of its Lie algebra by taking derivatives. This Lie algebra is spanned by the derivations
$\partial_{kl}:= x_k \cdot \pafg{x_l}$ as $k,l$ vary over $-\NN \cup \NN$. The
action of this derivation on a basis vector $x_I$ is obtained by writing
$x_I=x_{i_1} \wedge x_{i_2} \wedge \cdots$ and formally applying Leibniz'
rule. In other words,
\[
\partial_{kl} x_I =
\begin{cases}
	x_I & \text{if $k = l$ and $k \in I$,}\\
	\pm x_{I\setminus\{l\}\cup\{k\}} &
		\text{if $k \not \in I$ and $l \in I$, and}\\
	0 & \text{otherwise,}
\end{cases}
\]
with sign determined by the number of elements of $I$ strictly
between $k$ and $l$.\hfill $\diamondsuit$
\end{ex}	

\begin{re}
The symmetric algebra generated by the infinite wedge has an action
of $\Ginf$ by automorphisms. Since $\Ginf$ is isomorphic to the
infinite general linear group, one might think that this symmetric
algebra is a {\em twisted commutative algebra (tca)} in the sense of
\cite{Sam12,SamSnow12}. But this is not the case, since $\dirwed$ is not
a subquotient of any finite tensor power of the countably-dimensional
standard representation of the infinite general linear group. If one
restricts the attention to those $x_I$ with $I \supseteq \{p,p+1,\ldots\}$
for some fixed $p \in \NN$, acted on by the stabiliser in $\Ginf$ of
all $x_i$ for $i \geq p$, then one does obtain a tca. However, such a
tca is too small for our purposes. For instance, it
does not allow for proving statements about all
Grassmannians $\Gr(p,V)$ with both $p$ and $V$ varying.
\end{re}

In the following sections, we will be concerned with the {\em dual
infinite wedge} $(\dirwed)^*$. This uncountably-dimensional vector space
arises as the projective limit $\lim\limits_{\substack{\longleftarrow\\n,p}} \Wedge^p V_{n,p}^*$ of
the diagram obtained from Diagram \eqref{eq:dirwedge} by taking duals
of all arrows:
\begin{equation} \label{eq:projwedge}
\xymatrix{
\biw^0 V_{0,0}^* &
\biw^1 V_{0,1}^* \lpr &
\biw^2 V_{0,2}^* \lpr &
\ldots   \ar@{->>}[l]\\
\biw^0 V_{1,0}^* \upr &
\biw^1 V_{1,1}^* \lpr \upr &
\biw^2 V_{1,2}^* \lpr \upr &
\ldots \ar@{->>}[l]\\
\biw^0 V_{2,0}^* \upr &
\biw^1 V_{2,1}^* \lpr \upr &
\biw^2 V_{2,2}^* \lpr \upr &
\ldots \lpr \\
\vdots \upr & \vdots \upr & \vdots
\upr & \
}
\end{equation}
The symmetric algebra generated by $\dirwed$, which is
the polynomial ring in the $x_I$, serves as coordinate ring of the dual
infinite wedge. The dual infinite wedge carries the Zariski topology,
in which the closed subsets are those characterised by the vanishing of
a collection of polynomials in the $x_I$.

We will also need arrows going in the opposite direction.
For this, denote the basis of $V_{n,p}^*$ dual to the
standard basis by
$e_{-n},\ldots,e_{-1},e_1,\ldots,e_p$.
Take the $p$-th exterior power of the
embedding $V_{n,p}^* \to V_{n+1,p}^*,\ e_i \mapsto e_i$
as vertical maps and the map
\[ \Wedge^p V_{n,p}^* \to \Wedge^{p+1} V_{n,p+1}^*,\quad \omega
\mapsto \omega \wedge e_{p+1} \]
as horizontal map. These maps are right inverses (sections) of the
corresponding projections in Diagram~\eqref{eq:projwedge}, and they fit
into the commutative diagram
\begin{equation} \label{eq:dirwedgestar}
\xymatrix{
\Wedge^0 V^*_{0,0} \rem \dem &
\Wedge^1 V^*_{0,1} \rem \dem &
\Wedge^2 V^*_{0,2} \rem \dem &
\ldots\\
\Wedge^0 V^*_{1,0} \rem \dem &
\Wedge^1 V^*_{1,1} \rem \dem &
\Wedge^2 V^*_{1,2} \rem \dem &
\ldots\\
\Wedge^0 V^*_{2,0} \rem \dem &
\Wedge^1 V^*_{2,1} \rem \dem &
\Wedge^2 V^*_{2,2} \rem \dem &
\ldots\\
\vdots & \vdots & \vdots &
}
\end{equation}
In particular, this diagram allows us to lift an element of $\Wedge^{p}
V^*_{n,p}$ to an element in the dual infinite wedge. We will
return to this fact in Section~\ref{sec:backfin}.

\section{The limit of a Pl\"ucker variety} \label{sec:limit}

Let $\plg$ be a \pv{}, and evaluate $\X_{n,p} := \plg_p(V^*_{n,p})$. By the \pv{} axioms, the embedded variety
$\X_{n,p} \subseteq \Wedge^p V_{n,p}^*$ is the image of the embedded
variety $\plg_p(V) \subseteq \Wedge^p V$ for any $(n+p)$-dimensional
vector space $V$ under any isomorphism $V \to V^*_{n,p}$. Hence,
the $\X_{n,p}$ determine the \pv{} and they are stable under
$G_{n,p}=\GL(V_{n,p})$.

Next, the $X_{n,p}$ fit into two commutative diagrams
\begin{equation} \label{eq:diagX}
\xymatrix{
\X_{0,0} &
\X_{0,1} \lpr&
\X_{0,2} \lpr&
\ldots   \lpr\\
\X_{1,0} \upr &
\X_{1,1} \lpr \upr &
\X_{1,2} \lpr \upr &
\ldots \lpr \\
\X_{2,0} \upr &
\X_{2,1} \lpr \upr &
\X_{2,2} \lpr \upr &
\ldots \lpr \\
\vdots \upr & \vdots \upr & \vdots \upr & \
}
\quad
\xymatrix{
\X_{0,0} \rem \dem&
\X_{0,1} \rem \dem &
\X_{0,2} \rem \dem &
\ldots   \\
\X_{1,0} \rem \dem &
\X_{1,1} \rem \dem &
\X_{1,2} \rem \dem &
\ldots \\
\X_{2,0} \rem \dem &
\X_{2,1} \rem \dem &
\X_{2,2} \rem \dem &
\ldots \\
\vdots & \vdots & \vdots & \
}
\end{equation}
where the maps in the leftmost diagram are those from
Diagram~\ref{eq:projwedge} and those in the rightmost diagram are
those from Diagram~\ref{eq:dirwedgestar}.  The horizontal maps
preserve instances of \pvs{} because of Lemmas~\ref{lm:contraction}
and~\ref{lm:augmentation}, respectively. The vertical maps preserve
instances by Definition~\ref{de:pv}(2).

We denote $\Xinf := \lim\limits_{\longleftarrow}X_{n,p} \subseteq
(\dirwed)^*$, which we call the {\em limit} of the \pv{} $\plg$. This is
the subset of the dual infinite wedge consisting of all $\omega$ with
the property that for each $n,p$ the image of $\omega$ in $\Wedge^p
V_{n,p}^*$ lies in $\X_{n,p}$.  Equivalently, it is the zero set of
the union of all ideals of the $\X_{n,p}$ in the polynomial ring in the
variables $x_I \in \dirwed$. Since each $\X_{n,p}$ is $G_{n,p}$-stable,
$\Xinf$ is a $G_\infty$-stable, closed subset of $(\dirwed)^*$.

\begin{ex}
For the Grassmannian $\plg_p(V):=\Gr(p,V)$, the limit $\X_\infty$ is (the
charge zero part of) {\em Sato's Grassmannian} \cite{Sato83,Segal85};
for a more algebraic treatment see \cite{Alvarez98}. It is the common
zero set in $(\dirwed)^*$ of all polynomials of the form
\[ \sum_{k=1}^\infty (-1)^k x_{I \setminus \{i_k\}} (x_{i_k} \wedge
x_{J}) \]
where $I=\{i_1<i_2<\ldots\}$ is a subset of $-\NN \cup \NN$ with $k-i_k=1$
for $k \gg 0$ (charge $1$) and $J=\{j_1<j_2<\ldots\}$ is a subset with
$k-j_k=-1$ for $k \gg 0$ (charge $-1$) and where $x_{i_k} \wedge x_J$
equals $\pm x_{J \cup \{i_k\}}$ if $i_k \not \in J$ (sign depending on the
parity of the position of $i_k$ among the $j_l$) and zero otherwise. Note
that this is, indeed, a polynomial, since $i_k \in J$ for $k \gg 0$. In
characteristic zero, these Pl\"ucker relations generate the ideal of
$\Xinf$; for positive characteristic see
\cite{Abeasis80,Bruns03}. The simplest
Pl\"ucker relation comes from $\plg_2(V_{2,2})$ and reads
\[ x_{-2,-1,3,\ldots} x_{1,2,3,\ldots} -
x_{-2,1,3,\ldots} x_{-1,2,3,\ldots} +
x_{-2,2,3,\ldots} x_{-1,1,3,\ldots},
\]
or, in the Young diagram notation:
\begin{center}
\includegraphics[width=.9\textwidth]{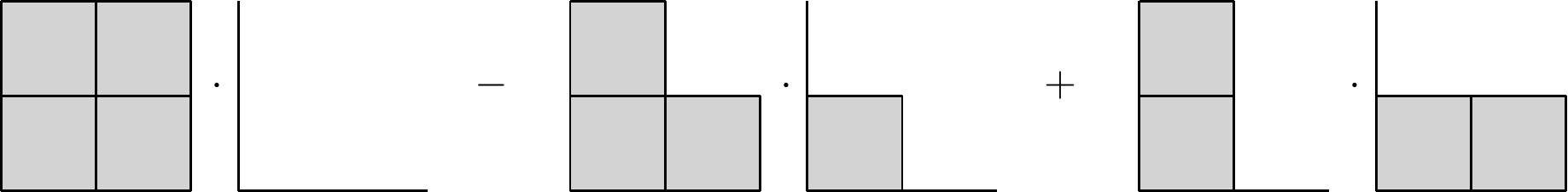}
\hfill $\diamondsuit$
\end{center}
\end{ex}

\section{Pfaffians on the dual infinite wedge}
\label{sec:pfaff}

To test whether an element $\omega$ of $\biw^2V$ has rank less than $r$,
one can use Pfaffians. We recall the definition.

\begin{de} Let $A = (a_{i,j})_{i,j=1}^{2r}$ be a skew-symmetric
matrix. Then the \emph{Pfaffian} of $A$ is defined as
$\Pf(A) = \frac{1}{2^r r!}\sum_{\sigma \in \mathrm{Sym}_{2r}}
\mathrm{sign}(\sigma)\prod_{i=1}^ra_{\sigma(2i-1)\sigma(2i)}$.  Its square
is the determinant of $A$.
\end{de}

If we write the Pfaffian of $A$ as a polynomial in
$\QQ[a_{i,j}]$, then its coefficients are integers. In fact,
if $\sigma \in \mathrm{Sym}_{2r}$, then the monomial
$\prod_{i=1}^ra_{\sigma(2i-1)\sigma(2i)}$ has coefficient
$\mathrm{sign}(\sigma)$ in $\Pf(A)$.
Hence the definition of $\Pf$ makes sense over fields of
positive characteristic, as well.

For a choice of linearly independent $x_1,\ldots,x_{2r} \in V^*$ we can
form the matrix $A=(x_i \wedge x_j)_{ij}$ of linear functions on $\Wedge^2
V$, and its Pfaffian $\Pf(A)$ is a degree-$r$ polynomial function on
$\Wedge^2(A)$. A polynomial obtained like this is called an order-$r$
sub-Pfaffian on $\Wedge^2 V$. The square of an order-$r$ sub-Pfaffian
is an order-$2r$ sub-determinant on $\Wedge^2 V$, and $\omega$ has
rank less than $r$ if and only if all order-$r$ sub-Pfaffians vanish
on it. In other words, the variety $Y^r(V)$ is the common zero set of
all $(r+1)$-th sub-Pfaffians on $\Wedge^2 V$. Note that the $(r+1)$-th
sub-Pfaffians form a single $\GL(V)$-orbit.

Returning to the $V_{n,2}$ from the definition of the infinite wedge,
observe that $\biw^2 V_{n,2}^*$ has coordinates $x_{i,j}:=x_i\wedge
x_j=-x_{i,j}$ with $i,j \in \{-n,\ldots,-2,-1,1,2\}$. We take $n=2r$
and define
\[ \Pf_{r+1} := \Pf((x_{i,j})_{i,j \in
\{-2r,\ldots,-2,-1,1,2\}}). \]
This is a polynomial function on $\biw^2 V_{n,2}^*$ and hence, by
Diagram~\eqref{eq:dirwedge}, on the dual infinite wedge. These specific
Pfaffians satisfy the following recursion that will be exploited in
Section~\ref{sec:noeth}.

\begin{lm} \label{lm:pfrec}
Assume that $r+1 \geq 2$. Among the variables $x_I$ appearing in the
Pfaffian $\Pf_{r+1}$, there is a unique $\preceq$-minimal one, namely,
$x_{-2r,-2r+1}=x_{-2r,-2r+1,3,4,\ldots}$. Moreover, we have the recursion
\[
\Pf_{r+1}=x_{-2r,-2r+1} \cdot \Pf_r \ + \ Q_{r+1}
\]
where $Q_{r+1}$ is a polynomial of degree $r+1$ in variables $\succ
x_{-2r,-2r+1,3,4,\ldots}$.
\end{lm}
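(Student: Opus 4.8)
The plan is to unwind the definition of $\Pf_{r+1}$ as a sum over perfect matchings of the index set $S := \{-2r,\ldots,-2,-1,1,2\}$ and to single out the monomials divisible by the coordinate $x_{-2r,-2r+1}$, which corresponds (under the identification of $\biw^2 V_{2r,2}^*$-coordinates with basis vectors $x_I$ of the infinite wedge) to the variable $x_{-2r,-2r+1,3,4,\ldots}$. Recall that $\Pf_{r+1} = \sum_{M} \operatorname{sgn}(M) \prod_{\{i,j\}\in M} x_{i,j}$ where $M$ runs over perfect matchings of $S$ and $\operatorname{sgn}(M)$ is the sign of any permutation realising $M$. First I would observe that the matchings $M$ that contain the pair $\{-2r,-2r+1\}$ are exactly the perfect matchings of $S\setminus\{-2r,-2r+1\} = \{-(2r-2),\ldots,-1,1,2\}$, which is precisely the index set defining $\Pf_r$ on $\biw^2 V_{2r-2,2}^*$; a short sign bookkeeping (the pair $\{-2r,-2r+1\}$ sits in the two smallest positions, so it contributes $+1$) shows that the corresponding part of $\Pf_{r+1}$ is exactly $x_{-2r,-2r+1}\cdot\Pf_r$. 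Defining $Q_{r+1}$ to be the sum of the remaining monomials — those coming from matchings $M$ that do \emph{not} pair $-2r$ with $-2r+1$ — then gives the claimed recursion by definition, and $Q_{r+1}$ is manifestly homogeneous of degree $r+1$.

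Next I would address the two qualitative assertions: that $x_{-2r,-2r+1}$ is the unique $\preceq$-minimal variable occurring in $\Pf_{r+1}$, and that every variable occurring in $Q_{r+1}$ is $\succ x_{-2r,-2r+1}$. For the first, note that every variable appearing in $\Pf_{r+1}$ is of the form $x_{i,j}$ with $i<j$ in $S$, which as an element of the infinite wedge is $x_{I}$ with $I = \{i,j\}\cup\{3,4,5,\ldots\}$; such an $x_I$ satisfies $x_{-2r,-2r+1} \preceq x_I$ because the smallest two elements of $\{-2r,-2r+1\}\cup\{3,4,\ldots\}$ are $\leq$ the smallest two elements of $I$ entrywise (indeed $-2r \le i$ and $-2r+1 \le j$ since $i,j\in S$ and $i<j$, so $j \ge -2r+1$), and all later entries agree. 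Moreover $x_{-2r,-2r+1}$ itself does occur (from the matching pairing $-2r$ with $-2r+1$, with nonzero coefficient), so it is the minimum. For the second assertion: a monomial of $Q_{r+1}$ comes from a matching $M$ that does not contain $\{-2r,-2r+1\}$, so in $M$ the element $-2r$ is paired with some $j > -2r+1$, i.e.\ $j \ge -2r+2$; then the variable $x_{-2r,j}$ appearing in that monomial equals $x_I$ with $I = \{-2r, j, 3,4,\ldots\}$ and $I \neq \{-2r,-2r+1,3,4,\ldots\}$, and since $-2r \le -2r$, $j \ge -2r+1$ with at least one inequality strict, we get $x_{-2r,-2r+1} \prec x_{-2r,j}$; as this factor divides the monomial, the monomial lies in the ideal generated by variables $\succ x_{-2r,-2r+1}$, which is what we need.

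The main obstacle I anticipate is purely one of sign bookkeeping: verifying cleanly that the matching $\{-2r,-2r+1\}$ contributes coefficient $+1$ and, more importantly, that the splitting $\Pf_{r+1} = x_{-2r,-2r+1}\Pf_r + Q_{r+1}$ respects signs exactly — i.e.\ that restricting attention to matchings through $\{-2r,-2r+1\}$ and deleting that pair is a sign-preserving bijection onto the matchings defining $\Pf_r$. This follows from the standard cofactor-type expansion of the Pfaffian along a row/column, $\Pf(A) = \sum_{j\ne 1}(-1)^{j}a_{1,j}\Pf(A_{\widehat{1},\widehat{j}})$, specialised to the first index $-2r$: the term $j = -2r+1$ (the second-smallest index, occupying position $2$) carries sign $(-1)^{2} = +1$ and its complementary minor is exactly the Pfaffian matrix for $\Pf_r$, while the remaining terms assemble into $Q_{r+1}$. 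I would carry out this cofactor expansion explicitly and let the remaining bound on the variables in $Q_{r+1}$ follow as above; none of the other steps involves any genuine difficulty.
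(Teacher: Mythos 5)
Your argument is correct and follows essentially the same route as the paper: expand $\Pf_{r+1}$ over perfect matchings of the index set, observe that the matchings through $\{-2r,-2r+1\}$ reproduce $x_{-2r,-2r+1}\cdot\Pf_r$ with the correct sign, and let $Q_{r+1}$ be the remainder. The one sentence worth tightening is your final conclusion that each monomial of $Q_{r+1}$ ``lies in the ideal generated by variables $\succ x_{-2r,-2r+1}$'': the lemma, and its later application in Section 6, actually require the stronger statement that \emph{every} variable occurring in $Q_{r+1}$ is $\succ x_{-2r,-2r+1}$; this does follow from what you have already established (every variable of $\Pf_{r+1}$ is $\succeq x_{-2r,-2r+1}$ by your first observation, and $x_{-2r,-2r+1}$ itself cannot occur in a monomial coming from a matching that avoids the pair $\{-2r,-2r+1\}$), so you should simply state that conclusion rather than the weaker ideal-membership.
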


\begin{proof}
The first statement is obvious, since all variables are of the form $x_i
\wedge x_j$ with $-2r \leq i < j \leq 2$ and therefore $-2r \leq i$ and
$-2r+1 \leq j$. For the second statement, note that any monomial occurring in $\Pf_{r+1}$ containing $x_{-2r,-2r+1}$ is of the form $x_{-2r,-2r+1}M_r$ with $M_r$ a monomial occurring in $\Pf_r$, and the coefficient of $M_r$ in $\Pf_r$ is the coefficient of $x_{-2r,-2r+1}M_r$ in $\Pf_{r+1}$ (namely, it is the sign of the permutation used to form $M_r$). This shows that the coefficient of $x_{-2r,-2r+1}$ in $\Pf_{r+1}$ is, indeed, $\Pf_r$.
\end{proof}

Dually, the pullback of $\Pf_{r+1}$ under a Hodge dual $\biw^{2r}
V_{2r,2} \to \biw^2 V_{2r,2}^*$ is the equation for the hypersurface
$Y^{r,\star}(V_{2r,2})$. Pulling back further along the exterior power
of an isomorphism $V_{2,2r}^* \to V_{2r,2}$, we find the dual Pfaffian
$\Pf^{\star}_{r+1}$, which is the polynomial function on $\biw^{2r}
V_{2,2r}^*$ whose vanishing characterises elements that are not of full
rank. Again, we can regard $\Pf^\star_{r+1}$ as a polynomial on the
dual infinite wedge. If we choose the scaling correctly, then we have
the following analogue of the previous lemma.

\begin{lm} \label{lm:pfrecstar}
Among the variables $x_I$ appearing in the Pfaffian
$\Pf^\star_{r+1}$, there is a unique $\preceq$-minimal one, namely,
$x_{-2,-1,1,2,\ldots,2r-2}=x_{-2,-1,1,2,\ldots,2r-2,2r+1,2r+2,\ldots}$. Moreover, we have the recursion
\[
\Pf_{r+1}^\star=x_{-2,-1,1,2,\ldots,2r-2} \cdot \Pf^\star_r \ + \
Q^\star_{r+1}
\]
where $Q^\star_{r+1}$ is a polynomial in
variables $\succ x_{-2,-1,1,2,\ldots,2r-2,2r+1,2r+2,\ldots}$.
\end{lm}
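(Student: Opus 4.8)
The plan is to make $\Pf^\star_{r+1}$ completely explicit and then repeat, essentially verbatim, the proof of Lemma~\ref{lm:pfrec}. Unwinding the definition, $\Pf^\star_{r+1}$ is the pullback of $\Pf_{r+1}=\Pf\bigl((x_i\wedge x_j)_{i,j\in\{-2r,\ldots,-1,1,2\}}\bigr)$ first under a Hodge dual $\biw^{2r}V_{2r,2}\to\biw^2 V_{2r,2}^*$ and then under $\biw^{2r}g$ for an isomorphism $g\colon V_{2,2r}^*\to V_{2r,2}$. Now the Hodge dual sends a standard basis vector $x_J$ of $\biw^{2r}V_{2r,2}$ to $\pm e_{J^{c}}$, where $J^{c}$ is the complementary $2$-subset of $\{-2r,\ldots,-1,1,2\}$; dually, the coordinate $x_i\wedge x_j$ on $\biw^2 V_{2r,2}^*$ is carried to $\pm$ times the coordinate on $\biw^{2r}V_{2r,2}$ indexed by $\{-2r,\ldots,-1,1,2\}\setminus\{i,j\}$, and the second pullback merely relabels these $2r$-element index sets as the cofinite subsets $I(a,b):=\bigl(\{-2,-1,1,\ldots,2r\}\setminus\{a,b\}\bigr)\cup\{2r+1,2r+2,\ldots\}$ of $-\NN\cup\NN$. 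Hence, up to a nonzero scalar, $\Pf^\star_{r+1}=\Pf(B)$, where $B$ is the skew-symmetric $(2r+2)\times(2r+2)$ matrix with rows and columns indexed by $\{-2,-1,1,\ldots,2r\}$ and with $(a,b)$-entry $\pm x_{I(a,b)}$. (Put differently, on standard basis vectors the composite pullback realises the particle--hole involution $I\mapsto-\bigl((-\NN\cup\NN)\setminus I\bigr)$, which transposes Young diagrams; this is why it sends the $\preceq$-minimal variable $x_{-2r,-2r+1,3,4,\ldots}$ of $\Pf_{r+1}$ to $x_{-2,-1,1,\ldots,2r-2,2r+1,2r+2,\ldots}$.)

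Granting this description, both assertions follow as in Lemma~\ref{lm:pfrec}. Every variable occurring in $\Pf(B)$ is one of the $x_{I(a,b)}$; and since $\{-2,-1,1,\ldots,2r-2\}$ consists of the $2r$ smallest elements of $\{-2,-1,1,\ldots,2r\}$, the set $I(2r-1,2r)=\{-2,-1,1,\ldots,2r-2,2r+1,2r+2,\ldots\}$ lies componentwise below every $I(a,b)$, so $x_{-2,-1,1,\ldots,2r-2,2r+1,\ldots}$ is the unique $\preceq$-minimal variable that appears. For the recursion, observe, exactly as in the proof of Lemma~\ref{lm:pfrec}, that every monomial of $\Pf(B)$ divisible by the entry $B_{2r-1,2r}=\pm x_{I(2r-1,2r)}$ equals $\pm x_{I(2r-1,2r)}$ times a monomial of $\Pf(B')$ with the same coefficient, and conversely, where $B'$ is obtained from $B$ by deleting rows and columns $2r-1,2r$; this $B'$ is the skew matrix indexed by $\{-2,-1,1,\ldots,2r-2\}$ with $(a,b)$-entry $\pm x_{(\{-2,-1,1,\ldots,2r-2\}\setminus\{a,b\})\cup\{2r-1,2r,2r+1,\ldots\}}$, so by the same analysis applied to $\Pf_r$ its Pfaffian is a nonzero scalar multiple of $\Pf^\star_r$. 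Choosing the scalars in the Hodge duals for successive values of $r$ consistently, so that this scalar is $1$, makes the $x_{I(2r-1,2r)}$-part of $\Pf^\star_{r+1}$ equal to $x_{-2,-1,1,\ldots,2r-2,2r+1,\ldots}\cdot\Pf^\star_r$; the remaining monomials involve only the other $x_{I(a,b)}$, each of which is $\succ x_{-2,-1,1,\ldots,2r-2,2r+1,\ldots}$, and they constitute $Q^\star_{r+1}$.

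The only genuinely delicate point is the bookkeeping in the first paragraph: checking that the two pullbacks really do turn $\Pf_{r+1}$ into a scalar multiple of $\Pf\bigl((x_{I(a,b)})\bigr)$, and that the scalars defining the Hodge duals can be pinned down, uniformly in $r$, so that the coefficient of the minimal variable comes out as $\Pf^\star_r$ on the nose rather than as a nonzero multiple of it. Once the composite pullback is identified with the particle--hole involution on index sets, everything else is a transcription of the proof of Lemma~\ref{lm:pfrec}.
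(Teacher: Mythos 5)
Your proof is correct. The paper actually gives no proof of Lemma~\ref{lm:pfrecstar} beyond the remark ``If we choose the scaling correctly, then we have the following analogue of the previous lemma'': your argument---unwinding the Hodge-dual and reindexing pullbacks to write $\Pf^\star_{r+1}$ as the Pfaffian of the skew matrix $B$ with entries $\pm x_{I(a,b)}$ indexed by $\{-2,-1,1,\ldots,2r\}$, observing that $I(2r-1,2r)$ is componentwise smallest, and then replaying the expansion-along-the-last-row argument of Lemma~\ref{lm:pfrec}---is exactly the intended dual argument, and the particle--hole (complementation, hence Young-diagram transposition) description correctly explains why the minimal variable of $\Pf_{r+1}$ is carried to that of $\Pf^\star_{r+1}$.
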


For later use, we observe that the Young diagram of the
smallest variable in $\Pf_{r+1}$ is a rectangle of width $2$ and
height $2r$, while the Young diagram of the smallest variable
in $\Pf^\star_{r+1}$ is a rectangle of height $2$ and width
$2r$:
\begin{center}
\includegraphics[scale=.7]{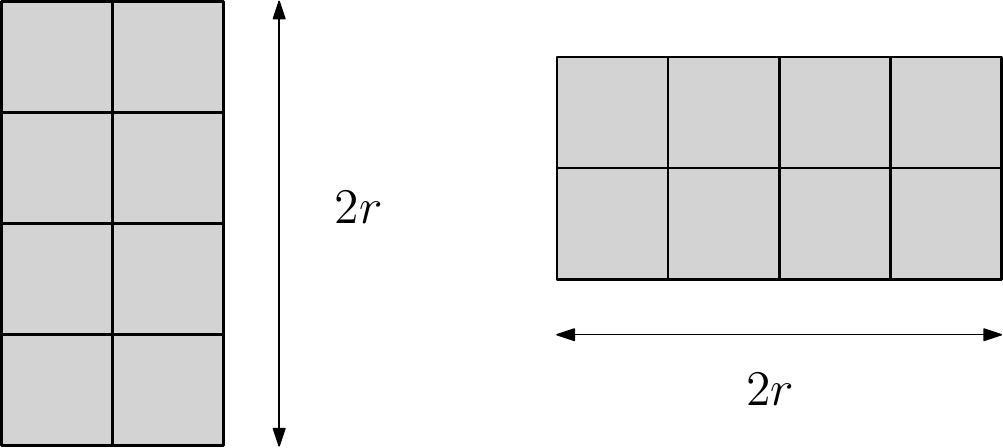}
\end{center}
All other variables have Young diagrams strictly contained in these
rectangles.

\begin{ex}
For $r=1$ we have
\[ \Pf_1 = x_{1,2} = \Pf^\star_1. \]
For $r=2$ we have
\[ \Pf_2 = x_{-2,-1}x_{1,2}-x_{-2,1}x_{-1,2} +
x_{-2,2}x_{-1,1}=\Pf^{\star}_2. \]
However, for $r=3$ we have
\begin{align*}
\Pf_3 = &x_{-4,-3}x_{-2,-1}x_{1,2} - x_{-4,-3}x_{-2,1}x_{-1,2}
+ \ldots + x_{-4,2}x_{-3,1}x_{-2,-1} \text{ and}\\
\Pf^{\star}_3 = &x_{-2,-1,1,2}x_{-2,-1,3,4}x_{1,2,3,4}
- x_{-2,-1,1,2}x_{-2,1,3,4}x_{-1,2,3,4} \\
& + \ldots + x_{-1,1,2,3}x_{-2,1,2,4}x_{-2,-1,3,4}.
\end{align*}
These polynomials are essentially different even when both
are viewed as polynomials on $\biw^4 V^*_{4,4}$, which is
the smallest $\biw^pV_{n,p}^*$ on which both are defined.
\hfill $\diamondsuit$
\end{ex}

\section{Equivariant Noetherianity of Pfaffian varieties}\label{sec:noeth}

This section contains the heart of our proof of Theorems~\ref{thm:main1}
and~\ref{thm:main2}. It deals with the following closed subsets of the
dual infinite wedge.

\begin{de}
For $r,s \in \ZZz$, we define $\Yinf^{r,s}$ as
\[ \Yinf^{r,s} := \{\omega \in (\dirwed)^* \mid
\forall g \in \Ginf: \Pf_{r+1} (g\omega)=\Pf^\star_{s+1} (g\omega)=0 \}. \]
We call $\Yinf^{r,s}$ a {\em Pfaffian variety}.
\end{de}

By construction, $\Yinf^{r,s}$ is a closed, $\Ginf$-stable subset of
the dual infinite wedge. The main result of this section is as follows.

\begin{thm}\label{thm:Ynoeth}
For all $r,s\in \ZZ_{\geq 0}$, the variety $\Yinf^{r,s}$ is
$\Ginf$-Noetherian. In other words, every $\Ginf$-stable closed subset
of $\Yinf^{r,s}$ is cut out by finitely many $\Ginf$-orbits of polynomial
equations.
\end{thm}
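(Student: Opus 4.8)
The plan is to prove Theorem~\ref{thm:Ynoeth} by reducing Noetherianity of $\Yinf^{r,s}$ to the auxiliary Theorem~\ref{thm:main3} via the two Pfaffian recursions of Lemmas~\ref{lm:pfrec} and~\ref{lm:pfrecstar}. The starting observation is that on $\Yinf^{r,s}$ all $\Ginf$-translates of $\Pf_{r+1}$ and $\Pf^\star_{s+1}$ vanish. By the recursion $\Pf_{r+1}=x_{-2r,-2r+1}\cdot\Pf_r+Q_{r+1}$, on the open subset of $\Yinf^{r,s}$ where some translate of the $\preceq$-minimal variable $x_{-2r,-2r+1}$ is nonzero, we can solve for $\Pf_r$ in terms of strictly larger variables; iterating, this expresses a Zariski-dense portion of $\Yinf^{r,s}$ as the image of a space whose ``small'' coordinates (the entries indexed by Young diagrams inside the $2\times 2r$ and $2r\times 2$ rectangles) are genuinely free, while the remaining coordinates are constrained. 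So the first step is to set up a stratification of $\Yinf^{r,s}$ by which translates of the minimal Pfaffian variables vanish, handle each stratum, and patch the finitely many pieces together using Noetherian induction on closed subsets.

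Second, I would identify the key stratum — where translates of $x_{-2r,-2r+1}$ and of $x_{-2,-1,1,\ldots,2r-2}$ are suitably nonzero — with (an open dense subset of) a space of the form $\Tot_{p,n,m,d}$. Concretely, after using $\Ginf$ to normalise, the coordinates $x_I$ of the dual infinite wedge split according to how the Young diagram of $I$ sits relative to the two critical rectangles: those contained in the tall rectangle, those in the wide rectangle, those in their (bounded) intersection, and the ``generic'' ones. The bounded intersection contributes a $K^d$ factor; the coordinates along one rectangle form an $\NN\times(\text{bounded})$ matrix, along the other a $(\text{bounded})\times\NN$ matrix, and the coordinates indexed by diagrams extending past both rectangles in both directions form an $\NN\times\NN$ matrix. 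A residual finite group of symmetries (powers coming from the truncated $\Ginf$-action transverse to the stabiliser of the two rectangles) acts, so one really gets a $p$-fold product $\Matp$; the group $\GL_\NN\times\GL_\NN$ then acts exactly as in Theorem~\ref{thm:main3}, coming from the parts of $\Ginf$ fixing each of the two rectangles pointwise. The Pfaffian equations, rewritten via the recursions, become polynomial equations in the entries of these matrices, and the whole stratum is a $\GL_\NN\times\GL_\NN$-stable closed subset of $\Tot_{p,n,m,d}$.

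Third, invoking Theorem~\ref{thm:main3}, that stratum is $\GL_\NN\times\GL_\NN$-Noetherian, hence any descending chain of $\Ginf$-stable closed subsets of $\Yinf^{r,s}$ stabilises on it; the complementary closed stratum is strictly smaller (cut out by setting a translate of a minimal Pfaffian variable to zero), so by induction on the partially ordered set of strata — finite because there are only finitely many minimal variables up to the $\Ginf$-action and the $\preceq$-order has no infinite ascending chains among the relevant variables — every descending chain in $\Yinf^{r,s}$ stabilises. The passage from ``every chain of $\GL_\NN\times\GL_\NN$-stable closed subsets of $\Tot$ stabilises'' to the statement about finitely many $\Ginf$-orbits of equations is the standard reformulation of equivariant Noetherianity.

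The main obstacle, and where I expect the real work to lie, is the bookkeeping in the second step: making the identification of the generic stratum with a product-of-matrices space precise and $\GL_\NN\times\GL_\NN$-equivariant. One must choose the normalisation of coordinates so that the infinitely many $x_I$ split cleanly into the four blocks above, check that the residual symmetry is exactly $\GL_\NN\times\GL_\NN$ times a finite group (and absorb the finite group by enlarging $p$), and verify that the Pfaffian equations — and more importantly \emph{all} $\Ginf$-translates of them, not just $\Pf_{r+1}$ and $\Pf^\star_{s+1}$ themselves — pull back to honest polynomial equations in the matrix entries (finiteness of degree here is what the recursions buy us, since they bound how far the relevant Young diagrams can stick out). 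Getting the combinatorics of which $x_I$ are forced to vanish on each stratum, and confirming that the induction on strata is genuinely well-founded, is delicate but, modulo the recursions, essentially formal.
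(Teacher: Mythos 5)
Your high-level plan matches the paper's in outline---peel off the lower-Pfaffian-vanishing locus, realise the remaining open stratum as a tuple-of-matrices space with a $\GL_\NN\times\GL_\NN$-action, and cite Theorem~\ref{thm:main3}---but there is a genuine gap exactly where you flag ``the real work'': making the identification of the generic stratum with a subset of $\Tot_{p,n,m,d}$ precise. ``Solving for $\Pf_r$'' from the recursion $\Pf_{r+1}=x_{-2r,-2r+1}\cdot\Pf_r+Q_{r+1}$ gives an expression for $\Pf_r$, not for any individual coordinate $x_I$, and it is the coordinates that must be controlled. The paper's key move is different: it restricts attention to the subgroup $H=G_{-\infty,-2r+1}\times G_{2s-1,\infty}\subseteq\Ginf$ that stabilises the open set $Z=\{\omega\in\Yinf^{r,s}:\Pf_r(\omega)\neq 0,\ \Pf^\star_s(\omega)\neq 0\}$, introduces the space $(\dirwed)_g^*$ of ``good'' coordinates (those $x_I$ with $|I\cap\ZZ_{\leq -2r+1}|\leq 1$ and $|I^c\cap\ZZ_{>2s-1}|\leq 1$), and shows that the projection $Z\to(\dirwed)_g^*$ that forgets all non-good coordinates is a locally closed embedding. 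For this one must show that on $Z$ each non-good $x_I$ is a \emph{rational function of the good coordinates} with denominator a power of $\Pf_r\Pf^\star_s$. That is proved by $\preceq$-induction: for a bad $I$ with $|I\cap\ZZ_{\leq -2r+1}|>1$ one applies a composite differential operator $D$ built from the $\partial_{k,l}$ of Example~\ref{ex:derivations} (chosen so that $D\,x_{-2r,-2r+1,3,4,\ldots}=x_I$) to the identity $0=\Pf_{r+1}$ on $\Yinf^{r,s}$; Leibniz' rule and the order-compatibility of the $\partial_{k,l}$ give $x_I\Pf_r=-(\text{terms in coordinates }\succ x_I)$, and the induction closes. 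Without this differential-operator step, ``rewriting the Pfaffian equations'' does not produce the embedding, and the appeal to Theorem~\ref{thm:main3} is unsupported.

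Three smaller points. First, you need Lemma~\ref{lem:oneGorbit} to reduce from ``some $g_1$ makes $\Pf_r$ nonzero and some $g_2$ makes $\Pf^\star_s$ nonzero'' to ``a single $g$ makes both nonzero''; this is what lets you write $\Yinf^{r,s}=\Yinf^{r-1,s}\cup\Yinf^{r,s-1}\cup\Ginf Z$ with $Z$ as above. Second, there is no residual finite group to absorb: sorting good coordinates by the pair $(|I\cap\ZZ_{\leq -2r+1}|,|I^c\cap\ZZ_{>2s-1}|)\in\{0,1\}^2$ puts them directly into $K^d$, $\Mat_{\NN,n}$, $\Mat_{m,\NN}$, and $\Matp$, and $H$ acts as $\GL_\NN\times\GL_\NN$ on the nose. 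Third, the paper's induction is simply on the pair $(r,s)$ (the base case $r=0$ or $s=0$ forcing $\Yinf^{r,s}=\{0\}$), not on a poset of strata, which keeps well-foundedness trivial.
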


We will need the following lemma on the complement
of Pfaffian varieties.

\begin{lem}\label{lem:oneGorbit}
Let $\omega \in (\dirwed)^*$ and suppose that there exist $g_1,g_2 \in
\Ginf$ such that $\Pf_r(g_1\omega) \neq 0$ and $\Pf^{\star}_s(g_2\omega)
\neq 0$. Then there exists a $g \in \Ginf$ such that both $\Pf_r(g\omega) \neq 0$
and $\Pf^{\star}_s(g\omega) \neq 0$.
\end{lem}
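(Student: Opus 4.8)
The plan is to exploit the fact that $\Pf_r$ and $\Pf_s^\star$ are each preserved, up to a nonzero scalar, by a large subgroup of $\Ginf$, and then to argue that the two conditions $\Pf_r(g\omega)\neq 0$ and $\Pf_s^\star(g\omega)\neq 0$ can be imposed simultaneously because each is a nonempty Zariski-open condition on $g$ ranging over a suitable finite-dimensional subgroup. Concretely, fix $n,p$ large enough that all the matrix entries involved in $\Pf_r$ and $\Pf_s^\star$ are coordinates on $\Wedge^q V_{n,p}^*$ for the relevant $q$, and large enough that (after enlarging $g_1,g_2$ to elements of $G_{n,p}$) the images of $g_1\omega$ and $g_2\omega$ already lie in $\Wedge^q V_{n,p}^*$ and witness the two nonvanishings there. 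Then work inside the finite-dimensional, irreducible (in fact, connected) algebraic group $G := G_{n,p} = \GL(V_{n,p})$ acting algebraically on the finite-dimensional space $\Wedge^q V_{n,p}^*$.

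First I would observe that the set $U_1 := \{g\in G \mid \Pf_r(g\omega)\neq 0\}$ is Zariski-open in $G$: it is the non-vanishing locus of the regular function $g\mapsto \Pf_r(g\omega)$ on $G$. It is nonempty because $g_1\in U_1$. Likewise $U_2 := \{g\in G\mid \Pf_s^\star(g\omega)\neq 0\}$ is nonempty Zariski-open, since $g_2\in U_2$. Because $G$ is irreducible, any two nonempty Zariski-open subsets meet, so $U_1\cap U_2\neq\emptyset$; picking any $g$ in the intersection (and then viewing it as an element of $\Ginf$ in the usual way) gives simultaneously $\Pf_r(g\omega)\neq 0$ and $\Pf_s^\star(g\omega)\neq 0$, as desired.

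The only genuine point requiring care—and the step I expect to be the main obstacle—is the bookkeeping needed to realise both nonvanishings on a \emph{single} finite level $\Wedge^q V_{n,p}^*$: a priori $\Pf_r$ lives most naturally on $\biw^2 V_{2(r-1),2}^*$ and $\Pf_s^\star$ on $\biw^{2(s-1)} V_{2,2(s-1)}^*$, and the witnesses $g_1,g_2$ may involve matrices supported far out in $\Ginf$. One must therefore choose $n,p$ (and hence $q = p$, say) large enough to contain all of this data, check that under the identifications built into Diagram~\eqref{eq:dirwedge} and Diagram~\eqref{eq:dirwedgestar} the polynomials $\Pf_r$ and $\Pf_s^\star$ pull back to honest regular functions on $\Wedge^q V_{n,p}^*$, and check that the $\Ginf$-action of $g_1,g_2$ on $\omega$ agrees with the $G_{n,p}$-action on the image of $\omega$ at that level. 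All of this is routine given the compatibility of the actions with the directed systems, but it is what makes the finite-dimensional irreducibility argument applicable; once that reduction is in place, the irreducibility of $\GL(V_{n,p})$ finishes the proof immediately.
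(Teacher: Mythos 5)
Your proof is correct, but it takes a modestly different route from the paper's. The paper does not invoke irreducibility of any $\GL(V_{n,p})$; instead it works directly with the two-parameter pencil $g=\lambda g_1+\mu g_2$, expands $\Pf_r(g\omega)$ and $\Pf_s^\star(g\omega)$ as polynomials in $(\lambda,\mu)$, and observes that the coefficient of $\lambda^r$ in the first is $\Pf_r(g_1\omega)\neq 0$ while the coefficient of $\mu^s$ in the second is $\Pf_s^\star(g_2\omega)\neq 0$. Hence each is a nonzero polynomial on $K^2$, and (using that $K$ is infinite) the set of $(\lambda,\mu)$ for which $g$ is invertible and both Pfaffians are nonzero is a nonempty Zariski-open subset of $K^2$. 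Your argument and the paper's both reduce, in the end, to Zariski density of $K$-points in an affine space over an infinite field; yours does this for the whole group $\GL(V_{n,p})$ once $n,p$ are chosen large enough, whereas the paper cuts down to the explicit $2$-dimensional pencil spanned by $g_1$ and $g_2$. The paper's version neatly sidesteps the bookkeeping you flag as the main obstacle: extracting the $\lambda^r$ and $\mu^s$ coefficients exhibits the nonemptiness of the relevant open conditions without any prior discussion of finite levels, because those coefficients equal the given nonvanishing quantities on the nose. On the other hand, your phrasing in terms of irreducibility of $G_{n,p}$ is conceptually cleaner and generalizes readily; just note that ``irreducible'' here should be understood as the statement that no nonzero regular function vanishes on all $K$-points, which does hold for $\GL_n$ over any infinite $K$ (it is an open subset of affine space), so this is fine.
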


\begin{proof}
Consider $g = \lambda g_1+\mu g_2$, where $\lambda,\mu
\in K$. Expand $\Pf_r(g\omega)$ as a formal polynomial in
$\lambda,\mu$. Observe that the coefficient at $\lambda^r$ is
$\Pf_r(g_1\omega) \neq 0$. Similarly, observe that the coefficient at
$\mu^s$ of $\Pf^{\star}_s(g\omega)$ is $\Pf^{\star}_s(g_2\omega) \neq
0$. So
the formal polynomials obtained are both non-zero, and hence the set
\[ \{(\lambda,\mu) \in K^2: g \not\in \Ginf \vee \Pf_r(g\omega) = 0 \vee
\Pf^{\star}_s(g\omega) = 0\}\]
is a proper Zariski-closed subset of $K^2$ (using the fact that $K$ is
infinite). So there exist $\lambda,\mu \in K$ such that  $g \in \Ginf$
and $\Pf_r(g\omega) \neq 0$ and $\Pf^{\star}_s(g\omega) \neq 0$, and $g
\in \Ginf$.
\end{proof}

To prove Theorem~\ref{thm:Ynoeth} we proceed by induction.  First, if
either $r=0$ or $s=0$, then the Pfaffian $\Pf_1=x_{1,2}=\Pf^\star_1$
vanishes on $\Yinf^{r,s}$. But then so do all polynomials in the
$\Ginf$-orbit of $x_{1,2}=x_{1,2,3,\ldots}$, which contains all
$x_I$. Hence then $Y^{r,s}$ consists of the single point $0$ and is
certainly equivariantly Noetherian. In the induction step, we may
therefore assume that $r,s \geq 1$. We then write
\[ \Yinf^{r,s} = \Yinf^{r-1,s} \cup \Yinf^{r,s-1} \cup Z', \]
where $Z'$ is the subset of $\omega \in \Yinf^{r,s}$ for which there
exist $g_1,g_2 \in \Ginf$ such that $\Pf_r(g_1 \omega), \Pf^\star_s(g_2
\omega)$ are both non-zero. By induction we know that the first two
terms are $\Ginf$-Noetherian, so it suffices to prove that $Z'$ is.
By the previous lemma, we have $Z'=G_\infty Z$ where
\[ Z:=\{\omega \in \Yinf^{r,s} \mid \Pf_r(\omega) \neq 0
\text{ and } \Pf^\star_s(\omega) \neq 0\}. \]
We now set out to prove that $Z$ is equivariantly Noetherian
under a suitable subgroup $H$ of $\Ginf$. To define this group, let
$G_{-\infty,-2r+1}$ denote the subgroup of $\Ginf$ of all maps that fix
all $x_i \in V_\infty$ with $i \geq -2r+2$. By Lemma~\ref{lm:pfrec}
with $r$ replaced by $r-1$, this group fixes $\Pf_r$ (and also $\Pf^\star_s$ by Lemma~\ref{lm:pfrecstar}). Similarly, let
$G_{2s-1,\infty}$ denote the group of all matrices that fix all $x_i$
with $i \leq 2s-2$.  By Lemma~\ref{lm:pfrecstar} with $r$ replaced by
$s-1$, each variable $x_I$ in $\Pf^\star_s$ has $\{2s-1,2s,2s+1,\ldots\}
\subseteq I$, so that an element $g \in G_{2s-1,\infty}$ scales $x_I$
by $\det(g)$ and hence $\Pf^\star_s$ by $\det(g)^s$ (and scales $\Pf_r$ by $\det(g)^r$). We conclude that
the open subset $Z$ of $\Yinf^{r,s}$ is stable under the group
\[ H:=G_{-\infty,-2r+1} \times G_{2s-1,\infty} \subseteq \Ginf. \]
To prove that $Z$ is $H$-Noetherian, we will embed it $H$-equivariantly
into a space of the type in Theorem~\ref{thm:main3}. To do so, we will
use the equations of $\Yinf^{r,s}$ to show that a point $\omega \in Z$ is in
fact determined uniquely by a subset of its coordinates
$x_I(\omega)$. These are the coordinates with $I$ as in the
following definition.

\begin{de}
Let $I \subseteq -\NN \cup \NN$ be a set of charge $0$. We call $I$
(or $x_I$) \emph{good} if both $I \cap \ZZ_{\leq -2r+1}$ and $I^c\cap
\ZZ_{>2s-1}$ have cardinality at most $1$.
\end{de}

A good $I$ corresponds to a lattice path that goes east at
most once to the north of the diagonal strip corresponding to $-2r+2$
and south at most once to the east of the diagonal strip
corresponding to $2s-2$, see Figure~\ref{fig:goodI}.

\begin{figure}
\includegraphics[width=.9\textwidth]{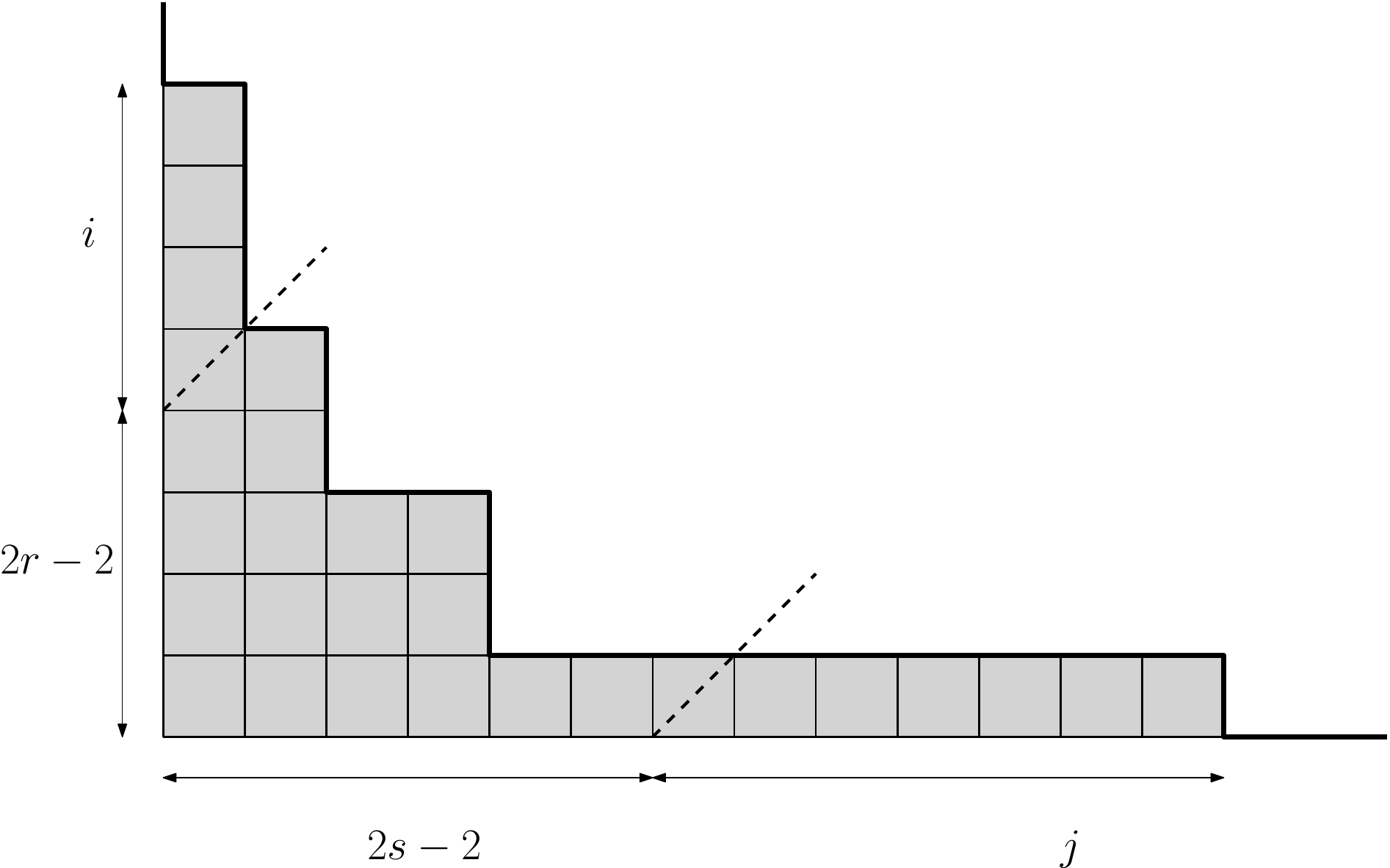}
\caption{The lattice path corresponding to a good $I$.}
\label{fig:goodI}
\end{figure}

We let $(\dirwed)_g$ be the subspace of $\dirwed$ spanned by the good
coordinates $x_I$, and let $(\dirwed)_g^*$ be its dual. Observe that $H$
acts on these spaces, and that the natural projection $(\dirwed)^* \to
(\dirwed)^*_g$ is $H$-equivariant.

\begin{lem}\label{lem:goodNoeth}
The topological space $(\dirwed)^*_g$ with the Zariski topology is
$H$-Noetherian.
\end{lem}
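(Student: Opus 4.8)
The plan is to identify $(\dirwed)^*_g$ with a variety of the type appearing in Theorem~\ref{thm:main3}, and then invoke that theorem together with the fact that a topological space is $H$-Noetherian as soon as it is the image of an $H$-Noetherian space under an $H$-equivariant continuous surjection. So the whole lemma comes down to unwinding what a good index set $I$ looks like and what the two factors of $H = G_{-\infty,-2r+1} \times G_{2s-1,\infty}$ do to the corresponding coordinate $x_I$.

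First I would analyse the combinatorics of good $I$. A charge-zero set $I$ with $|I \cap \ZZ_{\leq -2r+1}| \leq 1$ and $|I^c \cap \ZZ_{>2s-1}| \leq 1$ is determined by the following data: its intersection with the ``middle band'' $\{-2r+2,\ldots,2s-1\}$ (a finite set, so finitely many possibilities — this accounts for the $K^d$ factor, with $d$ equal to the number of such middle configurations); the single element, if any, of $I$ below $-2r+1$, which ranges over the index set $\ZZ_{\leq -2r+1} \cong \NN$ (this is the ``column'' direction); and the single element, if any, of $I^c$ above $2s-1$, which likewise ranges over $\NN$ (the ``row'' direction). Crucially, because of the charge-zero condition and these cardinality bounds, once the middle configuration is fixed, the presence/absence of a low element and of a high co-element are forced to occur in matched pairs, so each good $x_I$ corresponds to: a choice among finitely many middle types, together with at most one entry in each of two semi-infinite directions. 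This is exactly the data of one entry in $(\Mat_{\NN,1})^{\p} \times \Mat_{\NN,n'} \times \Mat_{m',\NN} \times K^d$ for suitable $p, n', m', d$ — more precisely, I would bundle all the good coordinates sharing a given pair (low element, high co-element) into a block of size equal to the number of compatible middle types, which is a constant.

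Next I would make the group action match. An element of $G_{-\infty,-2r+1}$ acts on $V_\infty$ fixing all $x_i$ with $i \geq -2r+2$ and acting invertibly on the span of the $x_i$ with $i \leq -2r+1$; on a good $x_I$ it therefore only shuffles the ``low element'' of $I$ (by an almost-identity invertible matrix indexed by $\ZZ_{\leq -2r+1}$), leaving the middle type and the high co-element untouched. Dually, $G_{2s-1,\infty}$ acts on the span of the $x_i$ with $i \geq 2s-1$, hence on the ``high co-element'' of $I^c$, again by an almost-identity invertible matrix indexed by $\ZZ_{>2s-1}$, leaving the rest fixed. After relabelling $\ZZ_{\leq -2r+1}$ and $\ZZ_{>2s-1}$ as $\NN$, this is precisely the action of $\GL_\NN \times \GL_\NN$ on the coordinate $\Mat_{\NN,\cdot}$ and $\Mat_{\cdot,\NN}$ factors (with the finitely many middle types contributing the trivial-action $K^d$ factor). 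Thus there is an $H$-equivariant linear isomorphism from $(\dirwed)_g$ onto $\Mat_{\NN,n'} \oplus \Mat_{m',\NN} \oplus K^d$ for appropriate $n', m', d$ (with $p=0$, i.e.\ no square-matrix factor is needed here); dualising gives an $H$-equivariant homeomorphism $(\dirwed)^*_g \cong \Tot_{0,n',m',d}$. Theorem~\ref{thm:main3} says the latter is equivariantly Noetherian, and the lemma follows.

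The main obstacle I anticipate is purely bookkeeping: correctly checking that the signs and the ``forced pairing'' really make each good $x_I$ a coordinate function that transforms linearly (not merely projectively) under the two copies of $\GL_\NN$, and that no extra interaction between the low and high parts sneaks in — i.e.\ that the action is genuinely diagonal in the sense of Theorem~\ref{thm:main3}, with $\GL_\NN$ on one side acting only on the column index and $\GL_\NN$ on the other side acting only on the row index. Once the dictionary ($I \leftrightarrow$ (middle type, column index, row index)) is pinned down and shown to be $H$-equivariant, the rest is immediate from the auxiliary Noetherianity theorem.
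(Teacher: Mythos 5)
Your high-level plan is the same as the paper's: classify good coordinates by the presence or absence of a ``low'' element of $I$ below $-2r+1$ and a ``high'' co-element of $I^c$ above $2s-1$, identify the resulting blocks with the factors of $\Tot_{p,n,m,d}$, check $H$-equivariance, and invoke Theorem~\ref{thm:main3}. That part is fine. But there is a genuine error in the bookkeeping step, and it is not a minor one.

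You claim that, once the middle configuration is fixed, the charge-zero condition forces the low element and the high co-element to appear \emph{in matched pairs}, and you conclude that one can take $p=0$, i.e.\ that no $(\Mat_{\NN,\NN})^p$ factor is needed. This is false. The charge condition only pins down, for each fixed middle configuration, the \emph{number} of low elements and the \emph{number} of high co-elements separately; and for the balanced middle configurations it forces exactly one of each --- not zero of each. Concretely, take $r=s=1$: for any $i\le -1$ and any $j\ge 2$, the set $I_{i,j}=\{i,1,2,\ldots,j-1,j+1,j+2,\ldots\}$ is good (its unique negative is $i$, its unique missing positive is $j$, and the charge is zero). These coordinates $x_{I_{i,j}}$ form an infinite-by-infinite grid on which $G_{-\infty,-2r+1}$ acts on the index $i$ by row operations and $G_{2s-1,\infty}$ acts on the index $j$ by column operations. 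This block is precisely a $\Mat_{\NN,\NN}$ factor, so in general $p>0$, contrary to what you wrote. The paper handles this by splitting good coordinates into four types according to $(|I\cap\ZZ_{<-2r+2}|,\ |I^c\cap\ZZ_{>2s-2}|)\in\{0,1\}^2$ and mapping the four types to $K^d$, $\Mat_{\NN,n}$, $\Mat_{m,\NN}$, and $(\Mat_{\NN,\NN})^p$ respectively.

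Fortunately, your error does not break the conclusion --- Theorem~\ref{thm:main3} is stated for arbitrary $p$ --- but the identification you gave is incorrect as written and would need to be replaced by the four-type decomposition (in particular, the $(1,1)$ type genuinely occurs and genuinely contributes $\Mat_{\NN,\NN}$ blocks).
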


\begin{proof}
A coordinate $x_I$ on $(\dirwed)^*_g$ can be one of four possible types,
depending on $|I\cap \ZZ_{<-2r+2}| \in \{0,1\}$ and $|I^c\cap\ZZ_{>2s-2}|
\in \{0,1\}$. The coordinates with $|I\cap \ZZ_{<-2r+2}| =
|I^c\cap\ZZ_{>2s-2}| = 0$ form a finite set, say of size $d$. The
coordinates with $|I\cap \ZZ_{<-2r+2}| = 1$ and $|I^c\cap\ZZ_{>2s-2}| = 0$
can be organized in finitely many, say $n$, columns with row index equal
to the unique element of $I \cap \ZZ_{<-2r+2}$.  These columns are acted
upon by $G_{-\infty,-2r+1}$.  Similarly, the coordinates with $|I\cap
\ZZ_{<-2r+2}| = 0$ and $|I^c\cap\ZZ_{>2s-2}| = 1$ can be organized in
finitely many, say $m$, rows, acted upon by $G_{2s-1,\infty}$. Finally,
the coordinates with $|I\cap \ZZ_{<-2r+2}| = |I^c\cap\ZZ_{>2s-2}| =
1$ can be organized in finitely many, say $p$, infinite-by-infinite
matrices, on which $G_{-\infty,-2r+1}$ acts by row operations, and on
which $G_{2s-1,\infty}$ acts by column operations. The row and column
index of the good $x_I$ in Figure~\ref{fig:goodI}, for instance, equals
$-2r+2-i$ and $2s-2+j$.

Thus, after relabelling the column and row indices to take values in
$\NN$ (so replacing $-2r+2-i$ by $i$ and $2s-2+j$ by $j$), an element
of $(\dirwed)^*_g$ can be seen as a tuple of a vector in $K^d$, an
element of $K^{\NN \times n}$, and element of $K^{m \times \NN}$, and an
element of $(K^{\NN \times \NN})^p$, and the action of $H$ corresponds
to the diagonal action of $\GL_\NN \times \GL_\NN$ on this space.
Now Theorem~\ref{thm:main3} implies the lemma.
\end{proof}

Before we continue with the proof that $Z$ is $H$-Noetherian, we recall
that the coordinates $x_I$ are partially ordered by $\preceq$, which
corresponds to opposite containment of Young diagrams. This order is
compatible with the ``upper triangular'' derivations $\partial_{k,l},\
k < l$ from Example~\ref{ex:derivations} in the following sense:
first, if $\partial_{k,l} x_I$ is non-zero, then it equals $\pm X_J$ with
$J \prec I$. Second, if also $\partial_{k,l} x_K=\pm X_L$ is non-zero
and if $I \prec K$, then $J \prec L$.

We now consider the projection $Z \to (\dirwed)^*_g$ that takes a point
$\omega$ and forgets all its coordinates except for the good ones. We
claim that this map is injective, and in fact a closed embedding into
the open subset of $(\dirwed)^*_g$ where both $\Pf_r$ and $\Pf^\star_s$
are non-zero. For this it suffices to show that, on $Z$, each coordinate
$x_I$ can be expressed as a rational function in the good coordinates,
whose denominator only has factors $\Pf_r$ and $\Pf^\star_s$. If $I$
is good, then $x_I$ itself is such an expression. Now we proceed by
induction relative to the partial order $\preceq$. So let $I$ be not good,
and assume that for all $J \succ I$ such a rational expression exists for
$x_{J}$. Since $I$ is not good, one of the following two cases applies.

First, suppose $|I \cap \ZZ_{\leq -2r+1}| > 1$. Then $I \preceq
\{-2r,-2r+1,3,4,\ldots\}$. On $\Yinf^{r,s}$ we have
\[ 0=\Pf_{r+1}=x_{-2r,-2r+1,3,4,\ldots}\Pf_r + Q_{r+1} \]
by Lemma~\ref{lm:pfrec}, where both $\Pf_r$ and $Q_{r+1}$ contain only
variables $x_J$ with $J \succ \{-2r,-2r+1,3,4,\ldots\}$.  Write $I=\{i_1 <
i_2 < \ldots\}$ and let $k \geq 2$ be the maximal index for which $i_k <
k$. Consider the differential operator
\[ D:=\partial_{i_k,k} \circ \partial_{i_{k-1},k-1} \circ \cdots \circ
\partial_{i_3,3} \circ \partial_{i_2,-2r+1} \circ \partial_{i_1,-2r} \]
which is chosen such that $D x_{-2r,-2r+1,3,4,\ldots}=x_I$. We stress
the order: first $\partial_{i_1,-2r}$ has the effect of replacing $-2r$
by $i_1$, then $-2r+1$ is replaced by $i_2$, etc. Since
$\Yinf^{r,s}$ is $\Ginf$-stable, the ideal of polynomials vanishing on
it is stable under $D$. Applying $D$ to the equation above, and using
the Leibniz rule, we find that
\[ 0 = x_I \Pf_r + P + D Q_{r+1} \]
holds on $\Yinf^{r,s}$, where $P$ is obtained from the product
$x_{-2r,-2r+1,3,4,\ldots} \cdot \Pf_r$ by letting at least one of
the factors of $D$ act on $\Pf_r$ and the remaining factors act on
$x_{-2r,-2r+1,3,4,\ldots}$. By the discussion above, the variables
appearing in $P$ and in $D Q_{r+1}$ are all strictly greater than $x_I$,
so for those variables a rational expression exists as
desired by the induction hypothesis. But then also for
\[ x_I = (-P-D Q_{r+1})/\Pf_r \]
such an expression exists.

Second, otherwise we have $|I^c \cap \ZZ_{\geq 2s-1}|>1$.
Then we have
\[ I \preceq \{-2,-1,1,2,\ldots,2s-2,2s+1,2s+2,\ldots\}. \]
On $\Yinf^{r,s}$ we have
\[
0= \Pf_{s+1}^\star=x_{-2,-1,1,2,\ldots,2s-2} \cdot \Pf^\star_s \ + \
Q^\star_{s+1}
\]
where all variables in $\Pf^\star_s$ and $Q^\star_{s+1}$ are strictly
larger than $x_{-2,-1,1,2,\ldots,2s-2}.$
Again, write $I=\{i_1<i_2<\ldots\}$, let $k \geq 2s$ be
maximal with $i_k \neq k$ and apply the differential operator
\[ D= \partial_{i_k,k} \circ \cdots \circ
\partial_{i_{2s+1},2s+1} \circ \partial_{i_{2s},2s-2}
\cdots \circ \partial_{i_3, 1} \circ \partial_{i_2,-1} \circ
\partial_{i_1,-2} \]
to the equation above to find an expression for $x_I$.

We conclude that the topological space $Z$ is isomorphic to an $H$-stable
locally closed subset of $(\dirwed)^*_g$ with the induced topology. Since
the latter space is $H$-Noetherian, so is $Z$. A basic observation
on equivariant Noetherianity is that when the $H$-Noetherian space $Z$
is smeared out by the larger group $\Ginf \supseteq H$, then the
resulting topological space is $\Ginf Z \subseteq (\dirwed)^*$ is
$\Ginf$-Noetherian (see \cite[Lemma 5.4]{Draisma11d}). Moreover, the union of finitely
many $\Ginf$-Noetherian spaces is $\Ginf$-Noetherian, hence in particular
so is
\[ \Yinf^{r,s} = \Yinf^{r-1,s} \cup \Yinf^{r,s-1} \cup \Ginf Z. \]
This concludes the proof of Theorem~\ref{thm:Ynoeth}
given Theorem~\ref{thm:main3}; the latter will be proved in
Section~\ref{sec:mxtuples}. A direct consequence of that theorem is
the following.

\begin{cor} \label{cor:pvlimit}
Let $\plg$ be a bounded \pv{}. Then its limit $\Xinf$ is defined in
$(\dirwed)^*$ by the $\Ginf$-orbits of finitely many polynomial equations.
\end{cor}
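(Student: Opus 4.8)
The plan is to realise $\Xinf$ as a $\Ginf$-stable closed subset of a Pfaffian variety $\Yinf^{r,s}$ and then invoke the Noetherianity established in Theorem~\ref{thm:Ynoeth}. First I would use boundedness: since $\plg$ is bounded, there is a unique rank $r$ with $\plg_2(V)=Y^r(V)$ for all $V$, and dually $\plg_{\dim V-2}(V)=Y^{r,\star}(V)$. Setting $s:=r$ (or, more carefully, taking $s$ to be the rank appearing on the dual side — these coincide by the discussion following Lemma~\ref{lem:starY}), I claim $\Xinf \subseteq \Yinf^{r,s}$. Indeed, for any $g \in \Ginf$ and any $\omega \in \Xinf$, the point $g\omega$ again lies in $\Xinf$ because $\Xinf$ is $\Ginf$-stable; projecting $g\omega$ to $\biw^2 V_{2r,2}^* = \X_{2r,2}$-ambient space, its image lies in $\X_{2r,2}=\plg_2(V^*_{2r,2})=Y^r(V^*_{2r,2})$, on which $\Pf_{r+1}$ vanishes by the defining property of $Y^r$; hence $\Pf_{r+1}(g\omega)=0$. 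The same argument with the dual Pfaffian $\Pf^\star_{s+1}$ and the instance $\X_{2,2s}=Y^{s,\star}(V^*_{2,2s})$ gives $\Pf^\star_{s+1}(g\omega)=0$. As $g$ was arbitrary, $\omega \in \Yinf^{r,s}$.

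Next I would observe that $\Xinf$ is a closed, $\Ginf$-stable subset of the dual infinite wedge (this is already stated in Section~\ref{sec:limit}), hence a closed $\Ginf$-stable subset of $\Yinf^{r,s}$ in the induced topology. Theorem~\ref{thm:Ynoeth} says $\Yinf^{r,s}$ is $\Ginf$-Noetherian, and a closed subspace of a $G$-Noetherian space is $G$-Noetherian; in particular every closed $\Ginf$-stable subset of $\Yinf^{r,s}$, and thus $\Xinf$ itself viewed as such a subset, is cut out inside $\Yinf^{r,s}$ by finitely many $\Ginf$-orbits of polynomials. Combining these with a finite set of $\Ginf$-orbits of equations defining $\Yinf^{r,s}$ inside $(\dirwed)^*$ — namely the orbits of $\Pf_{r+1}$ and $\Pf^\star_{s+1}$, which by definition of $\Yinf^{r,s}$ already suffice — yields finitely many $\Ginf$-orbits of polynomials whose common zero locus in $(\dirwed)^*$ is exactly $\Xinf$.

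The one point requiring a little care, and the main place a sloppy argument could go wrong, is matching the two ranks: I must make sure the integer $s$ I feed into $\Yinf^{r,s}$ is compatible with the \emph{dual} side of $\plg$, i.e. that $\plg_{\dim V-2}(V)=Y^{s,\star}(V)$ with this same $s$. By the remark after the definition of boundedness, the rank on the Hodge-dual side agrees with $r$, so one can simply take $s=r$; if one worried about this one could instead take $s$ large enough that $Y^{s,\star}$ contains all instances, at the cost of a possibly non-optimal Pfaffian variety, which is harmless for the qualitative statement. Everything else is a formal consequence of Theorem~\ref{thm:Ynoeth} together with the elementary fact that being equivariantly Noetherian passes to closed invariant subspaces. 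I do not expect any serious obstacle beyond bookkeeping the ranks correctly.
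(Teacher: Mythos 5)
Your proposal is correct and follows the same route as the paper: realise $\Xinf$ as a closed $\Ginf$-stable subset of $\Yinf^{r,r}$, apply Theorem~\ref{thm:Ynoeth}, and add the orbits of the Pfaffians cutting out $\Yinf^{r,r}$. Your explicit justification that $\Pf_{r+1}$ and $\Pf^\star_{r+1}$ vanish on $\Xinf$ via the projections to $\X_{2r,2}$ and $\X_{2,2r}$ is the correct fleshing-out of a step the paper leaves implicit, and your observation that the two ranks agree (so $s=r$) is exactly the right bookkeeping.
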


\begin{proof}
The limit $\Xinf$ is a closed, $\Ginf$-stable subset of $\Yinf^{r,r}$,
where $r$ is the rank of the bounded \pv{}. Since $\Yinf^{r,r}$ is
$\Ginf$-Noetherian, $\Xinf$ is defined within $\Yinf^{r,r}$ by the
$\Ginf$-orbits of finitely many equations. Adding to these the single
$\Ginf \Pf_r=\Pf^\star_r$, the corollary follows.
\end{proof}

\section{Back to finite-dimensional instances}\label{sec:backfin}

We have now proved a fundamental result, Corollary~\ref{cor:pvlimit},
on the limit of a bounded \pv{} $\plg$. This is a projective limit
of all instances of the \pv{}, hence projects to all of them. To draw
conclusions for these instances themselves, however, we need to be able
to lift them back into the limit.  For this we make use
of Diagram~\eqref{eq:dirwedgestar}. This diagram allows us to extend
a single $\omega_{n_1,p_1} \in \Wedge^{p_1} V^*_{n_1,p_1}$ to a point
$\omega=(\omega_{n,p})_{n,p \in \ZZz}$ in the dual infinite wedge. By
the right-most diagram in~\eqref{eq:diagX}, this point $\omega$ lies in
$\Xinf$ if and only if $\omega_{n_1,p_1}$ lies in $X_{n_1,p_1}$. We can
now easily prove the main theorem.

\begin{proof}[Proof of the main theorem.]
By Corollary~\ref{cor:pvlimit} there exist $n_0,p_0$ such that the
$\Ginf$-orbits of the equations of $X_{n_0,p_0}$ define $\Xinf$. Thus
for arbitrary $n,p$ we can find polynomials
$f_1,\ldots,f_N$ in the ideal of $X_{n_0,p_0}$ and group elements
$g_1,\ldots,g_N \in \Ginf$ such that the $g_i f_i$, restricted to
$\Wedge^p V^*_{n,p}$ via Diagram~\ref{eq:dirwedgestar}, define $X_{n,p}$
set-theoretically. Each of these equations $g_i f_i$ arises as a pullback
of $f_i$ under a sequence of linear maps as in the definition of a
\pv{}.
\end{proof}

This proof is slightly unsatisfactory in that we seem to have no control
over the elements $g_i$. {\em A priori}, they may have to be chosen from
$G_{n',p'}$ with $n',p'$ much larger than the relevant $n,n_0,p,p_0$. Our
next goal is to show that this is not the case. After the $g_i$ are
under control, also Theorem~\ref{thm:main2} follows.

From now on, we write $\Projd_{n_0,p_0} \omega$ for the image in
$\Wedge^{p_0} V_{n_0,p_0}^*$ of a point $\omega$ in the dual infinite
wedge. We will use the same notation when $\omega$ lies in some finite
$\Wedge^{p} V_{n,p}^*$. Then it is understood that $\omega$ is first
lifted to the dual infinite wedge and then projected.

\begin{lm}~\label{lem:onestep}
Let $\plg$ be a \pv{}, $n,p \in \ZZz$, and $\omega \in \biw^p V_{n,p}^*$.
Let $n_0,p_0,n',p' \in \ZZz$ with $n' \geq \max\{n,n_0\}$
and $p' \geq \max\{p,p_0\}$ and
suppose that there exists a $g \in G_{n',p'}$ such that $\Projd_{n_0,p_0}
g(\omega) \not\in \X_{n_0,p_0}$. Then the following hold.
\begin{enumerate}
\item If $p' > p$ and $p' > p_0$, then $\exists g' \in
G_{n',p'-1}: \Projd_{n_0,p_0}g'(\omega) \not\in \X_{n_0,p_0}$.
\item If $p' > p$ and $p'= p_0$, then $\exists g' \in
G_{n',p'-1}: \Projd_{n_0,p_0-1}g(\omega) \not\in \X_{n_0,p_0-1}$.
\item If $n' > n$ and $n' > n_0$, then $\exists g' \in
G_{n'-1,p'}: \Projd_{n_0,p_0}g(\omega) \not\in \X_{n_0,p_0}$.
\item If $n' > n$ and $n' = n_0$, then $\exists g' \in
G_{n'-1,p'}: \Projd_{n_0-1,p_0}g'(\omega) \not\in \X_{n_0-1,p_0}$.
\end{enumerate}
\end{lm}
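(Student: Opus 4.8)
The plan is to reduce each assertion to the concrete combinatorics of how the group $G_{n',p'}$ and the projection $\Projd$ interact with the one-step contraction and embedding maps, using that $\X_{n_0,p_0}$ is $G_{n_0,p_0}$-stable. Throughout, I would freely use that the diagrams \eqref{eq:projwedge}, \eqref{eq:dirwedgestar}, \eqref{eq:diagX} commute and that the horizontal and vertical arrows between instances respect $\plg$ (Lemmas~\ref{lm:augmentation} and~\ref{lm:contraction}). The uniform idea for all four parts is: write $g \in G_{n',p'}$ as a product $g = g_2 g_1$, where $g_1$ lies in a suitable parabolic-type subgroup that either fixes the extra coordinate $x_{p'}$ (resp.\ $x_{-n'}$) or moves it in a controlled way, and $g_2 \in G_{n',p'-1}$ (resp.\ $G_{n'-1,p'}$); then analyse $\Projd_{n_0,p_0} g_2 g_1(\omega)$.

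For part~(1), since $p' > p$ the element $\omega$ already lies in $\Wedge^p V^*_{n',p'-1}$ (before applying $g$), i.e.\ its $x_{p'}$-``slot'' is empty in the sense that $\omega = \Projd_{n',p'-1}\,\omega \wedge e_{p'}$-free; more precisely $\omega$ is in the image of the vertical-free embedding. The key step is the standard Bruhat-type decomposition $G_{n',p'} = G_{n',p'-1} \cdot P$, where $P$ is the subgroup of maps fixing $x_i$ for $i \neq p'$ up to adding multiples of $x_{p'}$, together with the Levi torus acting on $x_{p'}$. One checks that the $P$-part, applied to $\omega$ (which does not involve $x_{p'}$), either fixes $\omega$ or scales a lift; since $\X_{n_0,p_0}$ is a cone and $G_{n_0,p_0}$-stable, and since $p' > p_0$ means $x_{p'}$ is invisible to $\Projd_{n_0,p_0}$, we may absorb $P$ and take $g' = g_2 \in G_{n',p'-1}$. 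Part~(3) is the mirror image with $x_{-n'}$ in place of $x_{p'}$ and $n'$ in place of $p'$.

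For parts~(2) and~(4), the hypothesis $p' = p_0$ (resp.\ $n' = n_0$) is the genuinely new feature: now the extra coordinate $x_{p_0}$ is visible to the projection, so we cannot simply discard it. Here I would use Lemma~\ref{lm:contraction}: the horizontal map $\Wedge^{p_0} V^*_{n_0,p_0} \to \Wedge^{p_0-1} V^*_{n_0,p_0-1}$ sending $x_1\wedge\cdots\wedge x_{p_0} \mapsto (\text{contraction against } x_{p_0})$ carries $\X_{n_0,p_0}$ into $\X_{n_0,p_0-1}$. The plan is: given $g \in G_{n',p'}$ with $\Projd_{n_0,p_0} g(\omega) \notin \X_{n_0,p_0}$, we want $g' \in G_{n',p'-1}$ with $\Projd_{n_0,p_0-1} g'(\omega) \notin \X_{n_0,p_0-1}$. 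Since $p' > p$, again $\omega$ does not involve $x_{p'}$; decompose $g = g_2 g_1$ with $g_2 \in G_{n',p'-1}$ and $g_1$ in the parabolic fixing all $x_i$, $i \neq p'$, modulo $x_{p'}$, plus a torus scaling $x_{p'}$ by some $t \in K^\times$. The composite ``apply $g_2$, then contract out slot $p_0 = p'$'' equals ``first contract, then apply the induced $g_2$'': this is precisely commutativity of the contraction with the $G_{n',p'-1}$-action, which I would verify directly. Thus $\Projd_{n_0,p_0-1} g_2(\omega)$ differs from the contraction of $\Projd_{n_0,p_0} g_2(\omega)$ only by the scalar $t$ coming from $g_1$; if this contraction is nonzero (hence not in $\X_{n_0,p_0-1}$ after possibly a further $G_{n_0,p_0-1}$-translate) we are done, and if it is zero one argues that then $\Projd_{n_0,p_0}g(\omega) \in \X_{n_0,p_0}$ already, contradicting the hypothesis, because an element of $\Wedge^{p_0} V^*_{n_0,p_0}$ whose every contraction against a basis covector lies in the $\plg$-instance, and which is itself of this shape, must lie in $\X_{n_0,p_0}$ (using the cone structure and the $\star$-axiom to move between $\Wedge^{p_0}$ and $\Wedge^0$). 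Part~(4) is the mirror statement, contracting the $-n_0$-slot via the vertical map $\Wedge^{p_0} V^*_{n_0,p_0} \to \Wedge^{p_0} V^*_{n_0-1,p_0}$ and using Definition~\ref{de:pv}(2) directly rather than Lemma~\ref{lm:contraction}.

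The main obstacle is the bookkeeping in parts~(2) and~(4): making the ``first act then contract = first contract then act'' identity precise requires carefully tracking the section maps of Diagram~\eqref{eq:dirwedgestar} and the signs in the contraction formula of Lemma~\ref{lm:contraction}, and — more seriously — justifying the step where ``the contraction is zero'' forces $\Projd_{n_0,p_0}g(\omega)$ already to be in $\X_{n_0,p_0}$. For that I expect to need the observation that for $\omega_{n_0,p_0} \in \Wedge^{p_0} V^*_{n_0,p_0}$, being in $\X_{n_0,p_0}$ can be detected after applying a single generic element of $G_{n_0,p_0}$ followed by contraction, because the contractions against all basis covectors, taken together, recover $\omega_{n_0,p_0}$ up to scalar, and $\X$ is closed and conical; alternatively one invokes the $\star$-axiom to pass to $\Wedge^0$ where the claim is trivial. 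Everything else is a routine Bruhat decomposition argument combined with the already-established functoriality of $\plg$.
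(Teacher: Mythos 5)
Your overall framing is right — reduce to parts (1) and (2) by duality, assume $g$ generic, and relate the action of $G_{n',p'}$ on lifts to an action of $G_{n',p'-1}$ — but there are two concrete gaps that the paper's proof avoids.

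First, the decomposition $G_{n',p'} = G_{n',p'-1}\cdot P$ with $P$ the mirabolic you describe (maps sending $x_i \mapsto x_i + \lambda_i x_{p'}$ for $i\ne p'$ and $x_{p'}\mapsto t x_{p'}$) is simply false: the product set has strictly smaller dimension than $G_{n',p'}$, because it cannot produce a nonzero $e_{p'}$-component in $g e_i$ for $i < p'$. The paper does not decompose $g$ at all. Instead it \emph{constructs} $g'\in G_{n',p'-1}$ directly, as the composition of $g|_{V^*_{n',p'-1}}$ with the projection of $V^*_{n',p'}$ onto $V^*_{n',p'-1}$ along the line $\langle g e_{p'}\rangle$ (well-defined for generic $g$), and then verifies the single key identity $x_{p'}(g e_{p'})\cdot g'\pi_{n',p'-1}(\omega) = \pi_{n',p'-1}(g\omega)$ by hand. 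This sidesteps any group-theoretic factorisation.

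Second, and more seriously, your treatment of parts (2) and (4) points Lemma~\ref{lm:contraction} in the wrong logical direction. Contraction maps $\X_{n_0,p_0}$ \emph{into} $\X_{n_0,p_0-1}$; from $\Projd_{n_0,p_0}g(\omega)\notin\X_{n_0,p_0}$ you cannot conclude that its contraction lies outside $\X_{n_0,p_0-1}$, and your patch (``if the contraction is nonzero then a further $G_{n_0,p_0-1}$-translate leaves $\X_{n_0,p_0-1}$'') is circular: it already presupposes that the contraction is outside the $\GL$-stable set $\X_{n_0,p_0-1}$. (It also tacitly assumes $\X_{n_0,p_0-1}$ is a proper subvariety, which the lemma does not guarantee since $\plg$ is not assumed bounded.) The paper argues in the \emph{opposite} direction using the augmentation map of Lemma~\ref{lm:augmentation}: wedging $\pi_{n_0,p_0-1}g'(\omega)$ with (the projection of) $g e_{p_0}$ recovers, up to a nonzero scalar, $\pi_{n_0,p_0}(g\omega)\notin\X_{n_0,p_0}$; since wedging with a vector carries $\X_{n_0,p_0-1}$ into $\X_{n_0,p_0}$, the preimage $\pi_{n_0,p_0-1}g'(\omega)$ cannot lie in $\X_{n_0,p_0-1}$. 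Replacing your contraction step by this wedge/contrapositive step, and replacing the Bruhat factorisation by the explicit projection construction of $g'$, would make your outline line up with the paper's proof.
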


\begin{proof}
The first part and the third part of the lemma are dual to each other, and
so are the second part and the fourth part.
Therefore, it suffices to prove only the first two parts.
Moreover, the condition $\pi_{n_0,p_0}g(\omega) \not \in
X_{n_0,p_0}$ holds for $g$ in a nonempty and open, hence
dense, subset of
$G_{n',p'}$, so we may assume that $g$ from the statement of the lemma is
sufficiently general.  Recall that $V_{n',p'}^* = \langle e_i \rangle_{-n'
\leq i \leq p', i \neq 0}$, with corresponding coordinates $x_i$.

Suppose that $p' > p$. We may assume that $x_{p'}(ge_{p'}) \neq 0$. We
define the linear map $g'$ on $V_{n',p'-1}^*$ by
\[ g'v = gv-\frac{x_{p'}(gv)}{x_{p'}(ge_{p'})}ge_{p'},\ v
\in V_{n',p'-1}^*.\]
In other words, $g'$ equals the composition of
$g|_{V^*_{n',p'-1}}:V^*_{n',p'-1} \to V^*_{n',p'}$ and the projection
$V^*_{n',p'} \to V^*_{n',p'-1}$ along $ge_{p'}$.  We view $g'$
as an element of $G_{n',p'}$ by inclusion, i.e., fixing $e_{p'}$.
In $\biw^{p'}V_{n',p'}^*$ we compute
\[ g'\pi_{n',p'-1}(\omega) \wedge ge_{p'} =
g \Projd_{n',p'-1}(\omega) \wedge g e_{p'} =
g(\Projd_{n',p'-1}(\omega) \wedge e_{p'}) =
g(\Projd_{n',p'}(\omega))
. \]
Here the first equality follows from basic properties of alternating
tensors and the last equality follows from $p'>p$, which means that to go
from $\omega$ to $\pi_{n',p'}\omega$ one tensors with $p'-p > 0$ factors
on the right, and then follows the inclusion $\Wedge^{p'}V_{n,p'}^*
\to \Wedge^{p'}V_{n',p'}^*$. Contracting both sides with $x_{p'}$ yields
\begin{equation} \label{eq:lowerp}
x_{p'}(g e_{p'}) \cdot g' \pi_{n',p'-1}(\omega) =
\pi_{n',p'-1}(g \omega). \end{equation}
Now if $p' > p_0$, then we can further down and find
\[ \Projd_{n_0,p_0}g'(\omega) = \frac{1}{x_{p'}(ge_{p'})}\Projd_{n_0,p_0}
g (\omega) \not\in \X_{n_0,p_0}. \]
If $p'=p_0$, then wedging the right-hand side of~\eqref{eq:lowerp}
with $g e_{p_0}$ one obtains $\pi_{n',p_0}(g \omega)$, which projects
to $\pi_{n_0,p_0}(g \omega) \not \in \X_{n_0,p_0}$. This element is also
obtained from the left-hand side by applying $\pi_{n_0,p_0-1}$ and then
wedging with the projection of $g e_{p_0}$ to $V_{n_0,p_0}^*$. Hence the element $\pi_{n_0,p_0-1} g' \omega$
does not lie in $\X_{n_0,p_0-1}$.
\end{proof}

\begin{cor} \label{cor:boundedg}
Let $\plg$ be a bounded \pv{}. Then there exist $n_0,p_0
\in \ZZz$ such that for all $n,p \in \ZZz$, and all $\omega \in
\biw^pV_{n,p}^*$, the following are equivalent:
\begin{enumerate}
\item $\omega \not\in X_{n,p}$.
\item There exists $g \in G_{n,p}$ such that $\Projd_{\min(n,n_0),\min(p,p_0)}(g\omega) \not\in X_{\min(n,n_0),\min(p,p_0)}$.
\end{enumerate}
\end{cor}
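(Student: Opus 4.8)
The plan is to deduce this corollary from Corollary~\ref{cor:pvlimit} together with a repeated application of Lemma~\ref{lem:onestep}. I would take $n_0,p_0$ to be the pair produced in the proof of the main theorem, so that $\Xinf$ is cut out by the $\Ginf$-orbits of the equations of $X_{n_0,p_0}$; equivalently, a point $\eta$ of the dual infinite wedge lies in $\Xinf$ exactly when $\Projd_{n_0,p_0}(h\eta)\in X_{n_0,p_0}$ for every $h\in\Ginf$.

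The implication (2)$\Rightarrow$(1) I would dispose of first. Since $\min(n,n_0)\le n$ and $\min(p,p_0)\le p$, the map $\Projd_{\min(n,n_0),\min(p,p_0)}$ restricted to $\biw^pV_{n,p}^*$ is a composition of the contraction maps of Lemma~\ref{lm:contraction} and the restriction maps of Definition~\ref{de:pv}(2) occurring in the left-hand diagram of~\eqref{eq:diagX}, and hence carries $X_{n,p}$ into $X_{\min(n,n_0),\min(p,p_0)}$. As $X_{n,p}$ is $G_{n,p}$-stable, $\omega\in X_{n,p}$ forces $\Projd_{\min(n,n_0),\min(p,p_0)}(g\omega)\in X_{\min(n,n_0),\min(p,p_0)}$ for every $g\in G_{n,p}$, which is exactly the negation of (2).

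For (1)$\Rightarrow$(2) I would start from $\omega\notin X_{n,p}$, lift $\omega$ to the dual infinite wedge via Diagram~\eqref{eq:dirwedgestar}, and observe, using the right-hand diagram of~\eqref{eq:diagX}, that the lift does not lie in $\Xinf$. Hence there is $h\in\Ginf$, say $h\in G_{a,b}$, with $\Projd_{n_0,p_0}(h\omega)\notin X_{n_0,p_0}$; after enlarging $a,b$ I may assume $a\ge\max(n,n_0)$ and $b\ge\max(p,p_0)$, so Lemma~\ref{lem:onestep} applies with $(n',p')=(a,b)$. I would then iterate that lemma: using parts (1) and (2) I lower the ambient group's $p$-coordinate one unit at a time down to $p$, invoking part (2) precisely when this coordinate has caught up with the second index of the current projection level (which is then strictly larger than $p$), and afterwards using parts (3) and (4) I do the same with the $n$-coordinate down to $n$. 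The second index of the projection level stays equal to $p_0$ while the group coordinate exceeds it and thereafter decreases in lockstep with it, so it terminates at $\min(p,p_0)$; likewise the first index terminates at $\min(n,n_0)$. This produces $g\in G_{n,p}$ with $\Projd_{\min(n,n_0),\min(p,p_0)}(g\omega)\notin X_{\min(n,n_0),\min(p,p_0)}$, which is (2).

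The only real work is the bookkeeping in this last iteration: one must check that the hypotheses $n'\ge\max(n,\cdot)$ and $p'\ge\max(p,\cdot)$ of Lemma~\ref{lem:onestep} survive each step — they do, since part (1) is invoked only when $p'$ strictly exceeds both $p$ and the relevant projection index, and part (2) only when $p'$ equals that index while still exceeding $p$, so that $p'-1\ge p$ afterwards — and that the projection indices really land at $\min(p,p_0)$ and $\min(n,n_0)$ rather than overshooting. Since the $p$- and $n$-reductions act on disjoint sets of indices and therefore do not interfere, they may be carried out in either order, and no deeper obstacle arises.
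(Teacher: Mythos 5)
Your proof is correct and takes essentially the same route as the paper: invoke Corollary~\ref{cor:pvlimit} to find $n_0,p_0$ so that $\omega\notin X_{n,p}$ iff some $g\in\Ginf$ detects this at level $(n_0,p_0)$, then iterate Lemma~\ref{lem:onestep} to bring $g$ down into $G_{n,p}$ while the projection indices descend to $\min(n,n_0),\min(p,p_0)$. The paper's proof is terser and leaves the easy direction and the iteration bookkeeping implicit, but the content agrees with yours.
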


\begin{proof}
By Corollary~\ref{cor:pvlimit}, there exist $n_0,p_0$ such that for all
$n,p$ and $\omega \in \biw^p V_{n,p}^*$ we have $\omega \not \in X_{n,p}$
if and only if $\exists g \in \Ginf: \Projd_{n_0,p_0} (g \omega) \not
\in X_{n_0,p_0}$. Now apply Lemma~\ref{lem:onestep} repeatedly to get $g$
down to $G_{n,p}$.
\end{proof}

We conclude this section with the proof of
Theorem~\ref{thm:main2}.

\begin{proof}[Proof of Theorem~\ref{thm:main2}.]
Let $n_0,p_0$ be as in the previous corollary, and let $f_1,\ldots,f_N$
be defining equations for $\X_{n_0,p_0}$.

Let $(d,p,\omega \in \Wedge^p K^d)$ be the input to the algorithm. We
first give a {\em randomised} algorithm for testing wether $\omega \in
\plg_p(K^d)$. First, if $p>d$, then $\omega=0$ and the output is {\em
yes} if $\plg$ is not the empty \pv{} and {\em no} if it is. Otherwise,
set $n:=d-p$, pick a random linear isomorphism $g: K^d \to V_{n,p}^*$,
and return the answer to the question whether
$f_k(\pi_{n_0,p_0}\Wedge^p g(\omega))=0$ for all $k=1,\ldots,N$.

If $\omega$ lies in $\plg_p(K^d)$, then the output will always
be {\em yes}. If $\omega$ does not lie in $\plg_p(K^d)$, then by
Corollary~\ref{cor:boundedg} an open and dense set of choices for $g$
will yield the correct output {\em no}. Clearly, the number of arithmetic
operations over $K$ is polynomially bounded. Moreover, since the $f_k$
are fixed polynomials, no super-polynomial coefficient blow-up can happen
if one works over $\QQ$ or a more general number field.

To make this algorithm deterministic, one can take the matrix entries
$g_{ij}$ of $g$ to be variables rather than elements of $K$, and output
{\em yes} if all $f_k(\pi_{n_0,p_0}\Wedge^p g(\omega))$ are zero {\em
as polynomials in those variables}. Now the arithmetic operations
take place in the polynomial ring $K[g_{ij}]$, but (again since the $f_i$
are fixed) they still reduce to polynomially many operations
over $K$, and to an algorithm of polynomial bit-complexity
over $\QQ$ or number fields.
\end{proof}

\section{Noetherianity of matrix tuples}
\label{sec:mxtuples}

We recall the statement of Theorem~\ref{thm:main3}: for all ${\p},n,m,d\in \ZZz$, the space
\[ \Tot_{{\p},n,m,d} = \Matp\times \Mat_{\NN,n} \times
\Mat_{m,\NN} \times \Fin\]
is $\GLprod$-Noetherian with respect to the Zariski topology.

In the case where $p=0$, a much stronger statement is known to hold: the
coordinate ring of this variety is $\Sym(\NN) \times \Sym(\NN)$-Noetherian
\cite{Hillar09}. But this fails for $p>0$ \cite[Example 3.8]{Hillar09}, and we will not need this
result in our proof. The entire section will be devoted to the proof of
the theorem.  We order $\ZZz^4$ lexicographically, and we apply induction
on $({\p},n,m,d)$. From here on, we assume that $\Tot_{{\p}',n',m',d'}$
is $\GLprod$-Noetherian whenever $({\p}',n',m',d')$ is lexicographically
smaller than $({\p},n,m,d)$. We abbreviate $\Tot = \Tot_{{\p},n,m,d}$.
For $\x \in \Tot$ we write $\xma$, $\xcol$, and $\xrow$ for the
projections of $\x$ in $\Matp$, $\Colp$, $\Rowp$, respectively.

A key step in our proof will be a version of the the following dichotomy:
a $\GLprod$-stable closed subset of $\Matp$ is either equal to $\Matp$
or else consists of matrix tuples of {\em bounded rank}, in the sense
of the following definition.

\begin{de}
For a tuple $\ma = (\ma_1,\ldots,\ma_{\p})$ of matrices of the same size
(infinite or not), we define the {\em rank} as
\[ \rk(\ma) =
\min\{\rk(c_1\ma_1+\ldots+c_{\p}\ma_{\p}) \mid (c_1:\cdots:c_p)
\in
\PP^{p-1} \} \in \ZZz \cup \{\infty\}.\]
\end{de}

\noindent Before establishing the dichotomy, we settle the bounded-rank case
by induction.

\begin{lm}\label{lm:finrk}
Fix $r \in \ZZz$. Then $\{x \in \Tot \mid \rk \xma \leq r\}$ is
$\GLprod$-Noetherian.
\end{lm}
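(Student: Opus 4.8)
The plan is to reduce the bounded-rank locus to a space of the same shape but with strictly smaller parameters, so that the lexicographic induction hypothesis applies. Here is how I would carry this out.

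\medskip

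\noindent\textbf{Setting up a parametrisation of bounded-rank tuples.}
First I would analyse what it means for a $p$-tuple $\ma=(\ma_1,\dots,\ma_p)$ of $\NN\times\NN$-matrices to satisfy $\rk(\ma)\le r$: by definition some nonzero linear combination $c_1\ma_1+\dots+c_p\ma_p$ has rank $\le r$. Over the Zariski topology it is cleaner to work one combination at a time. So I would cover $\{x\in\Tot\mid\rk\xma\le r\}$ by the finitely-many (up to the $\GL_p$-action mixing the $\ma_i$, which is \emph{not} part of our acting group, so really we just fix representatives) closed subsets $Z_c:=\{x\mid \rk(c_1\xma_1+\dots+c_p\xma_p)\le r\}$ as $c$ ranges over $\PP^{p-1}$ --- but since $\PP^{p-1}$ is infinite this is not directly a finite cover. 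Instead I would change coordinates inside the tuple: since we may apply $\GL_{\NN}\times\GL_{\NN}$ but also permute/recombine the $\ma_i$ among themselves freely (this last is a harmless extra symmetry that only makes the closed set bigger, hence its equivariant Noetherianity is no weaker), it suffices to treat the closed set $W:=\{x\mid \rk(\xma_1)\le r\}$ and show it is $\GLprod$-Noetherian; the full bounded-rank locus is a finite union of $\GL_p$-translates of $W$ together with lower strata, and a finite union of Noetherian spaces is Noetherian.

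\medskip

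\noindent\textbf{Resolving the rank-$\le r$ condition on the first matrix.}
On $W$, the matrix $\xma_1$ has rank $\le r$, so it factors as $\xma_1 = C_1 R_1$ with $C_1\in\Mat_{\NN,r}$ and $R_1\in\Mat_{r,\NN}$. This factorisation is the crucial move: it trades the first infinite-by-infinite matrix for an $\NN\times r$ block and an $r\times\NN$ block, i.e.\ it converts one slot counted by $p$ into one slot counted by $n$ (namely $r$ extra columns) and one slot counted by $m$ (namely $r$ extra rows). Concretely I would consider the map
\[
\Phi:\ \Mat_{\NN,N_1}^{\,?}\ \longrightarrow\ \Tot,\qquad
(C_1,R_1,\ma_2,\dots,\ma_p,\xcol,\xrow,\xfin)\ \mapsto\ (C_1R_1,\ma_2,\dots,\ma_p,\xcol,\xrow,\xfin),
\]
where the source is $\Mat_{\NN,r}\times\Mat_{r,\NN}\times(\Mat_{\NN,\NN})^{p-1}\times\Mat_{\NN,n}\times\Mat_{m,\NN}\times K^d$, i.e.\ a space of the type $\Tot_{p-1,\,n+r,\,m+r,\,d}$ up to the harmless presence of the two extra rectangular factors $\Mat_{\NN,r},\Mat_{r,\NN}$ (which can be absorbed, since $\Mat_{\NN,r}$ and the column block $\Mat_{\NN,n}$ together form $\Mat_{\NN,n+r}$, similarly for rows). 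Equip the source with the diagonal $\GLprod$-action, acting on $C_1$ by left multiplication, on $R_1$ by right multiplication, trivially the other way, exactly as on the genuine column/row blocks. Then $\Phi$ is $\GLprod$-equivariant, continuous in the Zariski topology (the entries of $C_1R_1$ are polynomials in the entries of $C_1,R_1$), and its image is exactly $W$. By the lexicographic induction hypothesis the source is $\GLprod$-Noetherian, and the continuous equivariant image of a $G$-Noetherian space is $G$-Noetherian (a descending chain of closed $G$-stable subsets of $W$ pulls back to one in the source, which stabilises, and then pushes forward since $\Phi$ is surjective onto $W$). Hence $W$ is $\GLprod$-Noetherian, and therefore so is the finite union giving $\{x\in\Tot\mid\rk\xma\le r\}$.

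\medskip

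\noindent\textbf{Where the difficulty lies.} The routine parts are the equivariance of $\Phi$ and the image-of-Noetherian-is-Noetherian principle. The step that needs care is the \emph{surjectivity onto a closed set}: I must make sure that $W=\{x\mid\rk\xma_1\le r\}$ really is closed (it is, being cut out by the vanishing of all $(r+1)\times(r+1)$ minors of the first matrix --- an intersection of zero sets of polynomials in finitely many entries each, hence Zariski-closed) and that $\Phi$ surjects onto it, i.e.\ that \emph{every} infinite matrix of rank $\le r$ admits a factorisation $C_1R_1$ with the stated finite widths. This is true but uses that rank $\le r$ for an $\NN\times\NN$-matrix means its column space is contained in an $r$-dimensional subspace --- one picks $r$ columns (or fewer, padding with zeros) spanning the column space to build $C_1$, then expresses every column in that basis to build $R_1$; some bookkeeping is needed because the spanning columns need not be the first ones, but $\GL_\NN$ (column operations) moves any $r$ independent columns to the first $r$ positions, and that is precisely the group we are allowed to use. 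The other point to watch is the $\GL_p$-recombination trick: strictly the acting group is only $\GLprod$, not $\GL_p$ on the tuple index, so I would either phrase the cover of $\{\rk\xma\le r\}$ more carefully as a finite union over a well-chosen finite set of linear combinations $c$ whose $\GLprod$-translates exhaust the locus, or --- cleaner --- observe that $\{\rk\xma\le r\}$ is itself $\GLprod$-stable and closed, and directly build an equivariant surjection onto it from a single space of smaller parameters by factoring a \emph{generic} rank-$\le r$ combination; the first formulation is the one I would ultimately write up, as it avoids any genericity subtlety.
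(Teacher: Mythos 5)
Your core idea---factoring a rank-$\le r$ matrix as $C_1R_1$ with $C_1 \in \Mat_{\NN,r}$, $R_1 \in \Mat_{r,\NN}$, thereby trading one $\Matp$-slot for $r$ extra columns and $r$ extra rows, and invoking the lexicographic induction hypothesis on a space with smaller parameters---is exactly the paper's mechanism. But the step by which you reduce $\{\,x \mid \rk\xma\le r\,\}$ to the single set $W=\{\,x\mid\rk\xma_1\le r\,\}$ has a genuine gap that your own closing caveat does not repair. By definition $\{\rk\xma\le r\}=\bigcup_{c\in\PP^{p-1}} Z_c$ where $Z_c=\{\,x\mid\rk(c_1\xma_1+\dots+c_p\xma_p)\le r\,\}$. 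Each $Z_c$ is $\GLprod$-stable (the group acts the same way on every $\xma_i$, so it commutes with taking fixed linear combinations), so the $\GLprod$-translates of any one $Z_c$ are just $Z_c$ again; no ``well-chosen finite set of linear combinations $c$ whose $\GLprod$-translates exhaust the locus'' exists, because the $Z_c$ are genuinely infinitely many distinct closed sets and their union is strictly larger than any finite sub-union. The $\GL_p$-recombination you appeal to is not in the acting group, and adding it would change the class of equivariantly-stable closed subsets one must control, so it is not a ``harmless extra symmetry'' for the induction.

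The paper resolves this by building the recombination \emph{into the parametrising space} rather than into the acting group: it uses the morphism
\[
\varphi:\ \Tot_{p-1,\,n+r,\,m+r,\,d+p^2}\ \longrightarrow\ \Tot_{p,n,m,d},\qquad
\bigl((M_j)_j,(C_1,C_2),(R_1,R_2),(t,\alpha)\bigr)\mapsto(\xma,C_1,R_1,t),
\]
with $\xma_i=\sum_{j=1}^{p-1}\alpha_{ij}M_j+\alpha_{ip}\,C_2R_2$, where $\alpha\in K^{p\times p}$ is absorbed into the $K^{d+p^2}$-factor. This is $\GLprod$-equivariant (the group acts trivially on $\alpha$), continuous, and surjects onto $\{\rk\xma\le r\}$: given a point with $\rk\xma\le r$, choose any nonzero $c$ with $\rk(\sum_i c_i\xma_i)\le r$, extend $c$ to an invertible $\alpha^{-1}$ with last row $c$, set $N_j=\sum_i(\alpha^{-1})_{ji}\xma_i$, take $M_j=N_j$ for $j<p$ and factor $N_p=C_2R_2$. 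The source is lexicographically smaller, hence Noetherian by induction, and the image of a $G$-Noetherian space under a continuous equivariant map is $G$-Noetherian. So the missing ingredient in your write-up is exactly those $p^2$ extra finite-dimensional coordinates: they replace your untenable finite cover by a single equivariant parametrisation, and they also make the ``genericity subtlety'' you wanted to avoid disappear, since $\alpha$ is allowed to be arbitrary (not necessarily invertible) without spoiling containment of the image in $\{\rk\xma\le r\}$.
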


\begin{proof}
Consider the morphism $\varphi:\Tot_{p-1,n+r,m+r,d+p^2} \to
\Tot_{p,n,m,d}=\Tot$ defined as follows. Let $(M_1,\ldots,M_{p-1}) \in
\Mat_{\NN,\NN}^{p-1}$, let $C_1 \in \Mat_{\NN,m}, C_2 \in
\Mat_{\NN,r}$, let $R_1 \in \Mat_{m,\NN},R_2 \in
\Mat_{r,\NN}$, and let $t \in K^d$ and $(\alpha_{ij}) \in
K^{p \times p}$. Then
\[ \varphi((M_1,\ldots,M_{p-1}),(C_1,C_2),(R_1,R_2),(t,\alpha))
:= (\xma,C_1,R_1,t) \]
where the $i$-th matrix in $\xma$ equals
\[ \sum_{j=1}^{p-1} \alpha_{ij} M_j + \alpha_{ip} C_2 \cdot
R_2.
\]
This map is $\GLprod$-equivariant, continuous, and its image equals
the set in the statement of the lemma. Since $(p-1,n+r,m+r,d+p^2)$
is lexicographically smaller than $(p,n,m,d)$, the left-hand space
is equivariantly Noetherian by the induction assumption. Hence so is
its image.
\end{proof}

Similar induction arguments apply to the set of $x \in \Tot$ for which
$\xcol \in \Mat_{\NN,n}$ has rank strictly less than $n$ or $\xrow$
has rank strictly less than $m$. So we need only focus on the $x$ for
which $\xcol$ and $\xrow$ have full rank, and $\xma$ has high rank. We
start with an easy lemma in linear algebra.


\begin{lm}\label{lm:matrixreduction} Let $N_1,N_2 \in \ZZz$, and let $\ma
\in \Matfin$. If $\ma$ has rank at least ${\p}$,
then there exists a $v \in K^{N_2}$ for which $\ma_1v,\ldots,
\ma_{\p}v \in K^{N_1}$ are linearly independent.
\end{lm}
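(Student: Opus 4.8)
The plan is to prove this by induction on $p$, since the statement for $p$ asserts that a rank-$\geq p$ tuple of $p$ matrices admits a common ``spreading'' vector $v$. The base case $p=1$ is immediate: a single matrix $\ma_1$ of rank $\geq 1$ is nonzero, so some column is nonzero, i.e.\ $\ma_1 v \neq 0$ for some $v$, and a single nonzero vector is linearly independent.

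\medskip

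\noindent\emph{Plan of the induction step.} Assume the result for $p-1$. Given $\ma = (\ma_1,\dots,\ma_p)$ with $\rk(\ma) \geq p$, note in particular $\rk(\ma_1,\dots,\ma_{p-1}) \geq \rk(\ma) \geq p-1$ (any linear combination of the first $p-1$ matrices is a linear combination of all $p$), so by the induction hypothesis there is a vector $v_0 \in K^{N_2}$ with $\ma_1 v_0,\dots,\ma_{p-1}v_0$ linearly independent. The task is then to perturb $v_0$ to a $v$ that moreover makes $\ma_p v$ independent of $\ma_1 v,\dots,\ma_{p-1}v$, while preserving independence of the first $p-1$ images (which is an open condition, hence survives small generic perturbation). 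Consider $v = v_0 + w$ for $w$ ranging over $K^{N_2}$. The obstruction to the full independence is that $\ma_p v$ lies in the span $U_v := \langle \ma_1 v,\dots,\ma_{p-1}v\rangle$; I want to show this fails for generic $w$. Suppose, for contradiction, that for \emph{every} $w$ in a Zariski-dense (hence, since $K$ is infinite, for every $w$ in a suitable subspace, or by a genericity argument for all $w$) the vector $\ma_p v$ were a linear combination $\sum_{i<p} c_i(v)\, \ma_i v$. The clean way to extract a contradiction: if $\ma_p v \in U_v$ for all $v$ in a neighborhood of $v_0$ where the $\ma_i v$ stay independent, then one can solve for the coefficients $c_i(v)$ as rational functions of $v$, and in fact show these $c_i$ are \emph{constant}. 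Indeed, restricting to the affine line $v_0 + tw$ and comparing, the relation $\ma_p(v_0+tw) = \sum_i c_i(t)\,\ma_i(v_0+tw)$ with $c_i(t)$ rational in $t$ forces, upon clearing denominators and matching, that $c_i$ are constants $c_i$ independent of $t$ and of $w$. Then $\ma_p - \sum_{i<p} c_i \ma_i$ kills every $v$, i.e.\ $\ma_p = \sum_{i<p} c_i \ma_i$ identically; but then the linear combination $\ma_p - \sum_{i<p}c_i\ma_i = 0$ has rank $0$, contradicting $\rk(\ma) \geq p \geq 1$.

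\medskip

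\noindent The main obstacle is making the ``solve for constant coefficients'' argument airtight: a priori the dependence $\ma_p v \in \langle \ma_i v\rangle_{i<p}$ only gives, for each $v$ in the open locus, \emph{some} coefficients, and one must rule out that these vary with $v$. The slick route is to avoid choosing $v_0$ via induction at all and instead argue directly: consider the generic vector $v$ with indeterminate coordinates over $K$, form the $N_1 \times p$ matrix $[\ma_1 v \mid \cdots \mid \ma_p v]$ with entries linear in the indeterminates, and observe that if every $p\times p$ minor of this matrix vanished identically as a polynomial in the coordinates of $v$, then specializing the coordinates of $v$ and taking linear combinations would force every $c_1\ma_1 + \cdots + c_p\ma_p$ to have rank $< p$, i.e.\ $\rk(\ma) < p$ (this uses that a matrix has rank $<p$ iff all its $p$-column selections are rank $<p$, combined with the fact that $\{\ma_i v : v\}$ spans the column space of $\ma_i$ and a pigeonhole/Vandermonde argument over the infinite field $K$ to pass from ``all $\ma_i v$ jointly'' back to ``all $c_i\ma_i$''). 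Hence some $p\times p$ minor is a nonzero polynomial, and since $K$ is infinite it has a nonvanishing specialization $v \in K^{N_2}$, which is exactly the desired vector. I would write up this direct ``generic minor'' version, flagging the passage from jointly-dependent-images to a genuine low-rank linear combination as the step requiring care.
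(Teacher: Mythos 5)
Both of your routes have a gap at exactly the step you yourself flag, and neither gap can be closed the way you suggest; the paper instead uses a short incidence--variety dimension count that sidesteps both difficulties.

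On the inductive route: the claim that the dependency coefficients $c_i(v)$ must be constant is false, even when $\ma_1 v,\dots,\ma_{p-1} v$ remain linearly independent. Take $p=3$, $N_1=N_2=2$, $\ma_1=I$, $\ma_2=\left(\begin{smallmatrix}0&1\\1&0\end{smallmatrix}\right)$, $\ma_3=\left(\begin{smallmatrix}1&0\\0&-1\end{smallmatrix}\right)$. Three vectors in $K^2$ are automatically dependent, and solving $\ma_3 v = c_1(v)\ma_1 v + c_2(v)\ma_2 v$ yields $c_1(v)=(v_1^2+v_2^2)/(v_1^2-v_2^2)$, a genuinely non-constant rational function. (Here $\rk(\ma)<p$, but your argument never invokes $\rk(\ma)\geq p$ until \emph{after} constancy has been established, so you need constancy unconditionally, and it simply does not hold.) Restricting to a line $v_0+tw$ and ``clearing denominators'' does not force the $c_i(t)$ to be constant either, as the same example shows.

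On the generic-minor route: since $K$ is infinite, ``every $p\times p$ minor of $[\ma_1 v\mid\cdots\mid\ma_p v]$ vanishes identically as a polynomial in $v$'' is equivalent to ``$\ma_1 v,\dots,\ma_p v$ are dependent for every $v$,'' and the implication you assert --- that this forces $\rk(\ma)<p$ --- is precisely the contrapositive of the Lemma. Your parenthetical appeal to pigeonhole/Vandermonde restates the problem rather than solving it, so this route is circular.

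The idea you are missing is the incidence correspondence used in the paper: set
\[
Y := \{(v,d)\in K^{N_2}\times\PP^{p-1}\ :\ (d_1\ma_1+\cdots+d_p\ma_p)v=0\}.
\]
By the rank hypothesis, for every $d\in\PP^{p-1}$ the fibre of $Y$ over $d$ is the kernel of a matrix of rank at least $p$, hence has dimension at most $N_2-p$; therefore $\dim Y\leq (N_2-p)+(p-1)=N_2-1<N_2$, so the projection $Y\to K^{N_2}$ is not surjective. Any $v$ outside its image is a vector with $\ma_1 v,\dots,\ma_p v$ linearly independent, as required.
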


\begin{proof}
Consider the variety
\[ Y := \{(v,d) \in K^{N_2}\times \PP^{p-1} \mid
(d_1\ma_1+\ldots+d_{\p}\ma_{\p})\cdot v = 0\}. \]
Given $d \in \PP^{p-1}$, the space $\{v \mid (v,(d_i)_{i=1}^{\p}) \in Y\}$
has dimension at most $N_2-p$ because $\rk(d_1\ma_1+\ldots+d_{\p}\ma_{\p})
\geq p$ by assumption.  In other words, the fibre in $Y$ above $d$ has dimension
at most $N_2-p$. But then $Y$ has dimension at most $N_2-
p + p-1 < N_2$, and hence the projection from $Y$ to $K^{N_2}$ is not
surjective. \end{proof}

\begin{cor}\label{cor:indepvectfin}
Let $N_1,N_2 \in \ZZz$, let $\ma \in \Matfin$, and let $l \in \ZZz$. If
$\ma$ has rank at least ${\p}l$, then there exists a linear
subspace $V \subseteq K^{N_2}$ of dimension $l$ such that $\ma_1V +
\ldots + \ma_{\p}V$ has dimension ${\p}l$.
\end{cor}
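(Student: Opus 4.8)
The plan is to deduce Corollary~\ref{cor:indepvectfin} from Lemma~\ref{lm:matrixreduction} by a greedy induction on $l$, building up the subspace $V$ one basis vector at a time. The base case $l=0$ is trivial (take $V=0$). For the inductive step, suppose we have already found linearly independent vectors $v_1,\ldots,v_{l-1} \in K^{N_2}$ spanning a subspace $V'$ with $\dim(\ma_1 V' + \cdots + \ma_{\p} V') = {\p}(l-1)$, and we want to find one more vector $v_l$ to adjoin. The key trick is to pass to the quotient: let $U := \ma_1 V' + \cdots + \ma_{\p} V' \subseteq K^{N_1}$, let $W := K^{N_1}/U$ with quotient map $\pi: K^{N_1} \to W$, and consider the composed maps $\ma_i' := \pi \circ \ma_i : K^{N_2} \to W$. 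I would then argue that the tuple $\ma' = (\ma_1',\ldots,\ma_{\p}')$, viewed as acting on $K^{N_2}/V'$ (it kills $V'$ by construction, so it factors through this quotient), still has rank at least ${\p}$, apply Lemma~\ref{lm:matrixreduction} to it to obtain a vector $\bar v \in K^{N_2}/V'$ whose images $\ma_1'\bar v, \ldots, \ma_{\p}'\bar v$ are linearly independent in $W$, and lift $\bar v$ to $v_l \in K^{N_2}$. Setting $V := V' + \la v_l \ra$, one checks that $\dim(\ma_1 V + \cdots + \ma_{\p} V) = {\p}(l-1) + {\p} = {\p}l$: the new contributions $\ma_i v_l$ are independent modulo $U$ and there are $\p$ of them, while $V'$ still contributes its full ${\p}(l-1)$ dimensions, and $v_l \notin V'$ so $\dim V = l$.

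The one genuinely non-formal point — and the step I expect to be the main obstacle — is verifying that $\ma'$ (on $K^{N_2}/V'$) has rank at least $\p$, i.e. that for every $(d_1:\cdots:d_p) \in \PP^{p-1}$ the map $\pi \circ (d_1\ma_1 + \cdots + d_{\p}\ma_{\p})$, restricted to a complement of $V'$ in $K^{N_2}$, has rank $\geq \p$. Here is where one must be a little careful with the bookkeeping of dimensions. We know $\rk(d_1\ma_1 + \cdots + d_{\p}\ma_{\p}) \geq {\p}l$ on all of $K^{N_2}$ by hypothesis. Passing to the quotient $K^{N_1}/U$ with $\dim U = {\p}(l-1)$ drops the rank by at most ${\p}(l-1)$, leaving rank at least ${\p}l - {\p}(l-1) = {\p}$ for the map $\ma_i'$ on $K^{N_2}$; and since $\ma_i'$ vanishes on $V'$, this rank is attained on $K^{N_2}/V'$, so the induced tuple there has rank $\geq {\p}$. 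That is exactly the hypothesis needed to invoke Lemma~\ref{lm:matrixreduction} in dimension $(N_1 - {\p}(l-1), N_2 - (l-1))$.

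In fact, a cleaner way to organise the whole argument — which I would probably adopt to avoid repeatedly re-quotienting — is to prove the statement directly by induction on $l$ and phrase the inductive hypothesis so that it applies to arbitrary $N_1, N_2$ and arbitrary tuples of rank $\geq {\p}l$; then the step above is simply: apply Lemma~\ref{lm:matrixreduction} once to get $v_1$ with $\ma_1 v_1, \ldots, \ma_{\p} v_1$ independent (using rank $\geq \p$, which follows from rank $\geq {\p}l \geq \p$ for $l \geq 1$), pass to the quotient tuple of rank $\geq {\p}(l-1)$ as computed above, and apply the inductive hypothesis to get the remaining $l-1$ vectors. The sum condition $\dim(\ma_1 V + \cdots + \ma_{\p}V) = {\p}l$ then follows because the contributions from $v_1$ (which are $\p$-dimensional) and from the quotient (which are ${\p}(l-1)$-dimensional by the inductive hypothesis applied downstairs) are complementary by construction. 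Everything else is routine linear algebra.
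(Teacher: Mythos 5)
Your proposal is correct, and your ``cleaner'' reformulation at the end is essentially verbatim the paper's proof: induct on $l$, apply Lemma~\ref{lm:matrixreduction} once to get $v$ with $\ma_1 v,\ldots,\ma_p v$ independent, pass to the quotient tuple $K^{N_2}/\langle v\rangle \to K^{N_1}/\langle\ma_1 v,\ldots,\ma_p v\rangle$, observe it has rank $\geq p(l-1)$, and invoke the inductive hypothesis. Your first, forward-greedy formulation is a valid minor variant of the same argument.
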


\begin{proof}
We apply induction. For $l = 1$ the corollary follows from
Lemma~\ref{lm:matrixreduction}. Let $l > 1$, and assume that the
corollary is true for $l-1$. Use the lemma to pick $v \in K^{N_2}$
such that $W:=\langle \ma_1v,\ldots, \ma_{\p}v \rangle$ has dimension
${\p}$. Then each $\ma_i$ induces a linear map
\[ \tilde{\ma}_i:K^{N_2}/\langle v \rangle \to K^{N_1}/W,\]
and the tuple $(\tilde{\ma}_1,\ldots,\tilde{\ma}_p)$ has rank at least
${\p}l-{\p}={\p}(l-1)$---indeed, deleting a row or column from
a matrix tuple reduces the rank by at most one, and deleting a zero row from a matrix tuple does not reduce the rank. By the induction
hypothesis we find an $(l-1)$-dimensional $V' \subseteq K^{N_2}/\langle
v \rangle$ such that $\dim (\tilde{\ma}_1 V' + \cdots + \tilde{\ma}_p
V')={\p}(l-1)$, and the preimage $V$ of $V'$ in $K^{N_2}$ has
the desired property.
\end{proof}

\begin{cor}\label{cor:indepvect} Let $\ma \in \Matp$ and let $l \in
\ZZz$. If $\ma$ has rank at least ${\p}l$, then there exists a
linear space $V \subseteq K^{(\NN)}$ of dimension $l$ such that $\ma_1V +
\ldots + \ma_{\p}V$ has dimension ${\p}l$.
\end{cor}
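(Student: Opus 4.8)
The plan is to deduce this from the finite-dimensional Corollary~\ref{cor:indepvectfin} by restricting $\ma_1,\dots,\ma_{\p}$ to a large enough finite block of rows and columns. For $N_1,N_2\in\NN$ write $\ma^{(N_1,N_2)}\in\Matfin$ for the tuple of top-left $N_1\times N_2$ submatrices of the $\ma_i$. The one nontrivial point to establish is the following claim: there exist $N_1,N_2$ with $\rk\big(\ma^{(N_1,N_2)}\big)\geq {\p}l$.

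Granting this, the rest is routine bookkeeping. Apply Corollary~\ref{cor:indepvectfin} to $\ma^{(N_1,N_2)}$ to obtain an $l$-dimensional subspace $V'\subseteq K^{N_2}$ with $\dim\big(\ma_1^{(N_1,N_2)}V'+\dots+\ma_{\p}^{(N_1,N_2)}V'\big)={\p}l$. Let $V\subseteq K^{(\NN)}$ be the image of $V'$ under the coordinate inclusion $K^{N_2}\hookrightarrow K^{(\NN)}$, so that $\dim V=l$. Restricting the ambient $K^{\NN}$ to its first $N_1$ coordinates carries $\ma_iV$ onto $\ma_i^{(N_1,N_2)}V'$, so the linear projection $K^{\NN}\to K^{N_1}$ maps $\ma_1V+\dots+\ma_{\p}V$ onto a space of dimension ${\p}l$; hence $\dim(\ma_1V+\dots+\ma_{\p}V)\geq {\p}l$, while $\dim V=l$ forces the reverse inequality, and we get equality.

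To prove the claim I would argue by contradiction, using that a projective space is a Noetherian topological space. For each $N_1,N_2$ let $Z_{N_1,N_2}\subseteq\PP^{{\p}-1}$ be the set of $(c_1:\dots:c_{\p})$ for which $c_1\ma_1^{(N_1,N_2)}+\dots+c_{\p}\ma_{\p}^{(N_1,N_2)}$ has rank $<{\p}l$; this set is Zariski-closed, being cut out by the $({\p}l)\times({\p}l)$ minors, which are polynomials in $c$. Since a submatrix cannot have rank larger than the full matrix, $Z_{N_1',N_2'}\subseteq Z_{N_1,N_2}$ whenever $N_1'\geq N_1$ and $N_2'\geq N_2$. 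If the claim failed, every $Z_{N_1,N_2}$ would be nonempty; by Noetherianity of $\PP^{{\p}-1}$ the family $\{Z_{N_1,N_2}\}$ would have a minimal element, which by the monotonicity just noted equals $Z_{N_1,N_2}$ for all sufficiently large $N_1,N_2$ and is nonempty. Choosing $c$ in it, every $({\p}l)\times({\p}l)$ minor of $c_1\ma_1+\dots+c_{\p}\ma_{\p}$ — each of which involves only finitely many rows and columns, hence is a minor of some $\ma^{(N_1,N_2)}$ — vanishes, so $\rk(c_1\ma_1+\dots+c_{\p}\ma_{\p})<{\p}l$, contradicting the hypothesis $\rk(\ma)\geq {\p}l$.

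The main (and essentially only) obstacle is this finite-truncation claim; the descending-chain argument above is how I would get around the fact that one cannot in general certify the rank condition with a single generic minor, and I do not expect to need anything beyond Noetherianity of $\PP^{{\p}-1}$ for it.
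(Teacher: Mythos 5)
Your proposal is correct and follows essentially the same route as the paper. The paper projects to square blocks $\pi_N:\Mat_{\NN,\NN}\to\Mat_{N,N}$, considers the descending chain of closed sets $D_N=\{d\in\PP^{p-1}\mid \rk(\pi_N(d_1\ma_1+\dots+d_p\ma_p))<pl\}$, observes that $\bigcap_N D_N=\emptyset$ because $\ma$ has rank at least $pl$, and invokes Noetherianity of $\PP^{p-1}$ to conclude $D_N=\emptyset$ for some $N$; the remainder is the same reduction to Corollary~\ref{cor:indepvectfin}. Your version differs only cosmetically: you use rectangular truncations and a doubly-indexed family, and you phrase the stabilisation step as a proof by contradiction via a minimal member of the family rather than directly via the descending-chain condition, but the key idea (Noetherianity of projective space applied to the minor loci) is identical. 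One stylistic remark: since the $D_N$ (or your $Z_{N_1,N_2}$) are nested under the natural partial order, it is a little cleaner to argue directly, as the paper does, that the chain $D_1\supseteq D_2\supseteq\dots$ stabilises at its intersection, which you already know to be empty; this avoids the detour through a minimal element and the subsequent minor-by-minor contradiction.
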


Here, by $K^{(\NN)}$ we mean the countable-dimensional subspace of
$K^\NN$ where all but finitely many coordinates are zero. A matrix
in $\Mat_{\NN,\NN}$ defines naturally a linear map
$K^{(\NN)} \to K^\NN$, which is referred to in the
corollary.

\begin{proof}
For $N \in \ZZz$, denote by $\pi_N$ the projection from $\Mat_{\NN,\NN}$ to
$\Mat_{N,N}$. Define the variety
\[ D_N = \{(d_1:\ldots:d_{\p}) \in \PP^{p-1} \mid
\rk(\pi_N(d_1\ma_1+\ldots+d_{\p}\ma_{\p})) < {\p}l\}. \]
Observe that $D_1 \supseteq D_2 \supseteq \ldots$ is a descending sequence
of closed subvarieties of $\PP^{p-1}$. Moreover, the intersection of
the $D_N$ is $\emptyset$ because $\ma$ has rank at least ${\p}l$. So
there exists an $N \in \ZZz$ such that
$\rk(\pi_N(\ma_1),\ldots,\pi_N(\ma_{\p})) \geq {\p}l$.
Now apply Corollary~\ref{cor:indepvectfin} to find a linear subspace $V
\subseteq K^N$ such that $\pi_N(\ma_1)V + \ldots + \pi_N(\ma_{\p})V$ has
dimension ${\p}l$. View $V$ as a subspace of $K^{(\NN)}$, and observe that
$\ma_1V + \ldots + \ma_{\p}V$ has dimension ${\p}l$, as
desired.
\end{proof}

For $N_1,N_2 \in \ZZz$, we denote $\Totfin^{N_1,N_2} :=
\Matfin \times \Colfin \times \Rowfin$. Note that we have a natural projection from $\Tot$ to $\Totfin^{N_1,N_2}$.

\begin{prop}\label{prop:highrank-denseorbit}
Let $N_1, N_2 \in \ZZz$. Then there exists an $r \in \ZZz$ such that for
any $\x \in \Tot$ with $\rk(\xma) \geq r$ and with $\xcol$ and $\xrow$
of full rank, the projection of the orbit $\GLprod \x$ to $\Totfin^{N_1,N_2}$
is dense in $\Totfin^{N_1,N_2}$.
\end{prop}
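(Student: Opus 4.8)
The plan is to fix $N_1,N_2$ and choose $r$ large enough that a tuple $\xma$ of rank $\geq r$ carries "enough independent vectors" to steer an $N_1 \times N_2$ window of each matrix to an arbitrary prescribed value, while simultaneously controlling the columns $\xcol$ and rows $\xrow$. Concretely, I would apply Corollary~\ref{cor:indepvect} with $l := N_2$ (or a slightly larger value to leave room for the column/row data): if $\rk(\xma) \geq pl$ then there is an $l$-dimensional $V \subseteq K^{(\NN)}$ with $\xma_1 V + \cdots + \xma_p V$ of dimension $pl$. Using a column operation $g_2 \in \GL_\NN$ we may move $V$ to be spanned by the first $l$ standard basis vectors; then the images $\xma_i e_j$ ($1 \le i \le p$, $1 \le j \le l$) are $pl$ linearly independent vectors in $K^\NN$, and a row operation $g_1 \in \GL_\NN$ can send any chosen $pl$ of them — in particular enough to fill the first $N_1$ rows of the first $N_2$ columns of every $\xma_i$ — to prescribed values in $K^{N_1}$. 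That gives density of the projection of the $\GLprod$-orbit onto the $\Matfin$ factor.

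Next I would handle the $\Colfin$ and $\Rowfin$ factors. Since $\xcol$ has full rank $n$, its column span is an $n$-dimensional subspace of $K^\NN$; by a further left multiplication we can arrange the top $N_1 \times n$ block of $\xcol$ to be an arbitrary rank-$n$, hence (by density, arbitrary) matrix in $\Colfin$. The subtlety is that the row operation $g_1$ used for $\xcol$ must be compatible with the one used for $\xma$: this is why one takes $V$ of dimension a bit larger than $N_2$, so that after using part of the independent-vector supply to control the $\xma$-window, the remaining coordinates (rows with index $> N_1$, which are invisible in $\Totfin^{N_1,N_2}$) are still free to be used to fix up the columns. In fact the cleanest route is to observe that the subgroup of $\GL_\NN$ fixing the first $N_1$ coordinates still acts with a dense orbit on the relevant data below row $N_1$; one assembles the two left-multiplications into a single $g_1$ by a block-triangular construction. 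The rows $\xrow$ of full rank $m$ are dual and handled by the right-multiplication $\GL_\NN$ acting on columns beyond $N_2$, again chosen block-triangularly so as not to disturb the already-fixed $N_2$-column window.

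The key technical point, and the main obstacle, is the \emph{simultaneity}: a priori the row operation that linearizes the $\xma$-block, the one that fixes $\xcol$, and the column operation that fixes $\xrow$ might conflict. I would resolve this by a dimension count: all the constraints "top-left window of $\xma_i$ equals a given value, top $\xcol$-block equals a given value, left $\xrow$-block equals a given value" cut out a locally closed subset of $\Totfin^{N_1,N_2}$, and I want to show the image of $\GLprod \x$ meets a dense open part of it. Rather than explicit block matrices, the slick argument is: the map $\GLprod \to \Totfin^{N_1,N_2}$, $g \mapsto \pi_{N_1,N_2}(g\x)$, has image containing a constructible set, and its Zariski closure is $\GLprod$-stable on the left and right; by Corollary~\ref{cor:indepvect} the differential of this map at a suitable $g$ is surjective onto the $\Matfin$ factor (the independent vectors $\xma_i e_j$ give surjectivity), and full rank of $\xcol,\xrow$ gives surjectivity onto the remaining factors, with the off-block coordinates absorbing the overlap. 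Hence the image is dense. Choosing $r := p(N_2 + \max(N_1,\,?))$ — explicitly $r := p(N_1 + N_2)$ comfortably suffices — completes the argument. The one genuinely fiddly verification is that deleting/augmenting rows and columns in forming the "off-block" freedom does not drop the rank below what Corollary~\ref{cor:indepvect} needs, which is exactly the kind of bookkeeping already carried out in the proof of Corollary~\ref{cor:indepvectfin}.
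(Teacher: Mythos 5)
Your proposal correctly identifies the right tools (Corollary~\ref{cor:indepvect} to extract independent images, row/column operations to steer the window) and, importantly, names the central obstacle: the simultaneity of controlling the $\xma$, $\xcol$, and $\xrow$ blocks by a \emph{single} pair $(g_1,g_2) \in \GLprod$. But you do not actually resolve that obstacle, and the two mechanisms you sketch for resolving it both have gaps.

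First, the ``slick argument'': you claim the Zariski closure of the image of $g \mapsto \pi_{N_1,N_2}(g\x)$ is ``$\GLprod$-stable on the left and right.'' This is not a meaningful statement as written -- $\GLprod$ does not act on $\Totfin^{N_1,N_2}$ in a way that makes the projection $\Tot \to \Totfin^{N_1,N_2}$ equivariant, so there is no stability to exploit. The differential-surjectivity idea could in principle work, but you would need to exhibit a base point $g$ and compute the derivative, and this is precisely the bookkeeping you are deferring. Second, the ``block-triangular'' assembly of the two left-multiplications is asserted, not constructed, and the rank budget $r := p(N_1+N_2)$ is not shown to leave room for it.

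The paper dissolves the simultaneity issue by a cleaner decoupling: take $r := n + m + pl$ (with $N_1 = pl$, $N_2 = l$, which suffices), first normalize $\xcol$ to $\begin{bmatrix} 0 \\ I_n \end{bmatrix}$ and $\xrow$ to $\begin{bmatrix} 0 & I_m \end{bmatrix}$, and then observe that the restriction of $\xma$ to the \emph{complementary} $(N-n)\times(N-m)$ block still has rank $\geq pl$, because deleting $n$ rows and $m$ columns drops the rank of a matrix tuple by at most $n+m$. All subsequent row/column operations are confined to that complementary block (plus a carefully ordered sequence of corrections) and therefore do not disturb the normalized $\xcol, \xrow$ until the very last step, where the disturbance can itself be corrected by operations in the rows/columns that $\Totfin^{pl,l}$ does not see. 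Your instinct to ``leave room'' and to use rows beyond $N_1$ as invisible slack is pointing in this direction, but without the explicit rank accounting and the ordered sequence of normalizations, the proof is incomplete at exactly the step you flagged as the main obstacle.
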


\begin{proof}
It suffices to prove the lemma for $N_1 = {\p}l$ and $N_2 = l$ with
$l \in \ZZz$. Let $r:= n+m+{\p}l$ and let $\x \in \Tot$ be as in
the statement of the proposition.  As in Corollary~\ref{cor:indepvect},
there exists an $N$ such that the projection of $\xma$ to the first $N
\times N$ coordinates has rank at least $r$. Without loss of generality
(taking $N$ larger if necessary), we assume that the projection of $\xcol$
in $\Mat_{N,n}$ and the projection of $\xrow$ in $\Mat_{m,N}$ have
full rank.  From now on, we view $\x$ as an element of $\Totfin^{N,N}$,
and only act on it with elements of $\GL_N \times \GL_N$---the first
copy on $\xma,\xcol$ by row operations, and the second copy on $\xma,\xrow$
by column operations.

Without loss of generality, we may assume that
\[ \xcol=\begin{bmatrix} 0_{N-n,n}\\ I_n \end{bmatrix}
\text{ and }
\xrow=\begin{bmatrix} 0_{m,N-m} & I_m \end{bmatrix}.
\]
Let $\ma' = \xma'$ be the projection of $\ma = \xma$ to
$(\Mat_{N-n,N-m})^{\p}$. Observe that $\ma'$ has rank at least
${\p}l$. Then by Corollary~\ref{cor:indepvectfin}, there exists
a linear subspace $V \subset K^{N-m}$ of dimension $l$ such that
$W:=\ma_1'V+\ma_2'V+\ldots+\ma_{\p}'V \subset K^{N-n}$ has dimension ${\p}l$.
View $V$ as a subspace of $K^N$. Performing column
operations on the first $N-m$ columns does not change
$\xrow$ and can bring $V$ into the span of the first $l$
standard basis elements. Performing row operations on the
first $N-m$ rows does not change $\xcol$ and can bring $W$
into the span of the first $pl$ standard basis vectors.
Performing further row operations on the first $pl$ rows we achieve that
\[
\xma=\left(
\begin{bmatrix}
I_l & *_{l,N-l} \\
0_l & *_{l,N-l} \\
\vdots & \vdots \\
0_l & *_{l,N-l} \\
*_{N-pl,l} & *_{N-pl,N-l}
\end{bmatrix},
\begin{bmatrix}
0_l & * \\
I_l & * \\
\vdots & \vdots \\
0_l & * \\
*   & *
\end{bmatrix},
\ldots,
\begin{bmatrix}
0_l & * \\
0_l & * \\
\vdots & \vdots \\
I_l & * \\
*   & *
\end{bmatrix}
\right).
\]
Subtracting suitable linear combinations of the first $pl$
rows from the last $N-pl$ rows, we can clear the $*$s below
the identity matrices:
\[
\xma=\left(
\begin{bmatrix}
I_l & *_{l,N-l} \\
0_l & *_{l,N-l} \\
\vdots & \vdots \\
0_l & *_{l,N-l} \\
0_{N-pl,l} & *_{N-pl,N-l}
\end{bmatrix},
\begin{bmatrix}
0_l & * \\
I_l & * \\
\vdots & \vdots \\
0_l & * \\
0   & *
\end{bmatrix},
\ldots,
\begin{bmatrix}
0_l & * \\
0_l & * \\
\vdots & \vdots \\
I_l & * \\
0   & *
\end{bmatrix}
\right),
\]
still with $\xrow$ and $\xcol$ as above. To see that the $\GL_N \times
\GL_N$-orbit of $x$ projects dominantly into $\Totfin^{pl,l}$, pick a
general point $x'$ in the latter space. Subtract a linear combination of
the last $N-m$ columns of $\xma$ and $\xrow$ from the first $l$ columns
to achieve that $\xrow$ becomes equal to $\xrow'$.  This messes up the
$0/I$-structure of $\xma$, but by generality of $\xcol'$ we may assume
that the $pl \times pl$-matrix obtained from the new $\xma$ by concatenating
the $pl \times l$-submatrices of the $p$ components is still invertible
(though no longer the identity matrix). Moreover, we may assume that the
same holds for $\xma'$. Then suitable row operations with the first $pl$
rows move $\xma$ into $\xma'$, and subtracting suitable multiples of these
$pl$ rows from the $N-pl$ rows below them again clears the $*'s$. Finally,
subtracting a suitable linear combination of the last $N-n$ rows from the
first $pl$ rows fixes $\xma=\xma'$ and $\xcol=\xcol'$ but moves $\xrow$
into $\xrow'$.
\end{proof}

Here is the promised dichotomy.

\begin{cor} \label{cor:dichot}
Let $Y$ be a $\GLprod$-stable closed subset of $\Matp \times \Colp
\times \Rowp$. If $Y$ contains elements $x$ with $\xrow$ and $\xcol$
of full rank and $\xma$ of arbitrarily high rank, then $Y=\Matp \times
\Colp \times \Rowp$.
\end{cor}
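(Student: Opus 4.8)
The plan is to deduce Corollary~\ref{cor:dichot} from Proposition~\ref{prop:highrank-denseorbit} together with the observation that $Y$ is \emph{closed} and $\GLprod$-stable. First I would fix an arbitrary pair $N_1,N_2 \in \ZZz$ and let $\rho_{N_1,N_2}: \Matp \times \Colp \times \Rowp \to \Totfin^{N_1,N_2}$ denote the natural (continuous, $\GL_{N_1}\times\GL_{N_2}$-equivariant) projection onto the upper-left corner. Since closed sets in $\Matp \times \Colp \times \Rowp$ are, by definition, cut out by polynomials in finitely many matrix entries, to prove $Y$ is the whole space it suffices to show that $\rho_{N_1,N_2}(Y)$ is Zariski-dense in $\Totfin^{N_1,N_2}$ for every $N_1,N_2$: a polynomial vanishing on $Y$ involves only finitely many coordinates, hence factors through some $\rho_{N_1,N_2}$, and if each image is dense then that polynomial is zero.

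Next I would invoke Proposition~\ref{prop:highrank-denseorbit}: given $N_1,N_2$, it produces an $r \in \ZZz$ such that whenever $x \in \Tot$ (here taken with $d=0$, i.e.\ in $\Matp\times\Colp\times\Rowp$) satisfies $\rk(\xma)\geq r$ and $\xcol,\xrow$ of full rank, the projection of the orbit $\GLprod\, x$ into $\Totfin^{N_1,N_2}$ is dense. By the hypothesis of the corollary, $Y$ contains some such point $x$ with $\rk(\xma)\geq r$ and $\xcol,\xrow$ of full rank. Since $Y$ is $\GLprod$-stable, $\GLprod\, x \subseteq Y$, and hence $\rho_{N_1,N_2}(Y) \supseteq \rho_{N_1,N_2}(\GLprod\, x)$, which is dense in $\Totfin^{N_1,N_2}$. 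As $N_1,N_2$ were arbitrary, the preceding paragraph gives $Y = \Matp \times \Colp \times \Rowp$.

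I expect no serious obstacle here: the real work has already been done in Proposition~\ref{prop:highrank-denseorbit}. The only points requiring a word of care are (i) that a closed subset of this infinite-dimensional affine space is genuinely determined by its finite-dimensional projections — this is immediate from the stated definition of the Zariski topology in terms of polynomials in the entries of the matrices; and (ii) that the proposition, stated for $\Tot_{p,n,m,d}$, applies with the trivial $K^d$-factor $d=0$, so that it directly concerns $\Matp\times\Colp\times\Rowp$. One should also note that $\Totfin^{N_1,N_2}$ is an irreducible variety (a product of affine spaces), so "dense" is unambiguous and a nonempty open subset suffices.
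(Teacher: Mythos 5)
Your argument is correct and is essentially the paper's own proof: both reduce to showing that any polynomial vanishing on $Y$ lives on some $\Totfin^{N_1,N_2}$, then invoke Proposition~\ref{prop:highrank-denseorbit} and $\GLprod$-stability of $Y$ to conclude that the polynomial vanishes on a dense subset of $\Totfin^{N_1,N_2}$ and is therefore zero. The only cosmetic difference is that you phrase the conclusion as ``every finite-dimensional projection of $Y$ is dense'' whereas the paper phrases it directly as ``every polynomial vanishing on $Y$ is zero''; these are the same observation.
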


\begin{proof}
Let $f$ be a polynomial that vanishes identically on $Y$.  Then there
exist $N_1,N_2$ such that the matrix entries appearing in $f$ are
coordinates on $\Totfin^{N_1,N_2}$. Let $r$ be as in the proposition,
and pick an element $x \in Y$ with $\rk \xma=r$ and $\xcol,\xrow$
of full rank. Then $f$ vanishes on the projection of $\GLprod x$ in
$\Totfin^{N_1,N_2}$. Hence, by the proposition, $f$ is zero.
\end{proof}

Now we can complete the proof of the theorem.

\begin{proof}[Proof of Theorem~\ref{thm:main3}.]
Assume that we have a descending chain
\[ \Tot \supseteq Y_1 \supseteq Y_2 \supseteq \ldots \]
of $\GLprod$-stable closed subsets of $\Tot$. For $r \in \ZZz$ let $U_k^r$
denote the subset of $Y_k$ where $\xcol$ and $\xrow$ have full rank and
$\xma$ has rank at least $r$. By Lemma~\ref{lm:finrk} (and variations
concerning $\xcol$ and $\xrow$), it suffices to prove that for some
value of $r$, the chain of closures
\[ \Tot \supseteq \overline{U_1^r} \supseteq \overline{U_2^r} \supseteq \ldots \]
stabilises. Note that $U_k^r$ shrinks both when $k$ grows and when $r$
grows. Since $K^d$ is Noetherian, we may choose $k$ and $r$ such that for
all $l \geq k$ and $s \geq r$ the closure of the projection of $U_l^s$
into $K^d$ is constant, say equal to $Z$. We will then argue that
\[ \overline{U_l^r} = \Matp \times \Mat_{\NN,n} \times
\Mat_{m,\NN} \times Z \text{ for all } l \geq k. \]
To simplify notation, write $X:=U_l^r$, and let $f$ be a polynomial
function on $\Tot$ that vanishes identically on $X$.  We can write
$f = \sum_{i=1}^l f_i \otimes h_i$ with each $f_i$ defined on $\Matp
\times \Colp \times \Rowp$, and each $h_i$ defined on $K^d$, and with
the $f_i$ linearly independent. We will argue that each $h_i$ vanishes
on $Z$.

Choose $N_1,N_2$ such that all $f_i$ involve only coordinates from
$\Totfin^{N_1,N_2}$. By the proposition there exists an $s \geq r$ such
that for all $x \in \Matp \times \Colp \times \Rowp$ with $\xcol,\xrow$ of
full rank and $\rk \xma \geq s$ the orbit $\GLprod x$ projects dominantly
into $\Totfin^{N_1,N_2}$. In particular, the restrictions of the $f_i$
to $\GLprod x$ are still $K$-linearly independent. Now if such an $x$
lies in the fibre in $X$ over $z \in Z$, then we find
\[ 0=f(g(x,z))=f(gx,z)=\sum_i f_i(gx) h_i(z) \text{ for all
} g \in \Ginf,\]
and by varying the $g$ we conclude that $h_i(z)=0$ for all $i$. Finally,
the image of $U_l^s \subseteq X$ in $Z$ is dense by assumption, and
hence $h_i(z)=0$ for all $i$ and $z$.
\end{proof}

\section{Discussion}\label{sec:disc}

We have introduced the natural notion of {\em Pl\"ucker variety}, which
is a family of subvarieties of exterior powers that, like Grassmannians,
are functorial and behave well under duals. For the {\em bounded ones}
among these, we have established that they are defined by polynomial
equations of bounded degree, independently of the particular instance
of the Pl\"ucker variety. This result is new already for the first
secant variety of the Grassmannian, and for the tangential variety of
the Grassmannian.

Before turning to several open questions that result from our
work, let us explain the second part of the title of this paper. In
Section~\ref{sec:limit} we have seen that the projective limit of all
(cones over) Grassmannians is (the cone over) Sato's Grassmannian. Higher
secant varieties of any Pl\"ucker variety are again Pl\"ucker
varieties, of which one can take the limit. But, in fact, passing to
the limit commutes with taking joins, and in particular with taking
secant varieties.  In other words, for \pvs{} $\plg$ and $\bY$ the
limit of $\plg + \bY$ as defined in Section~\ref{sec:limit} equals the
closure of the set of all points of the form $x+y$ with $x \in \Xinf$
and $y \in \Yinf$, where the addition takes place in the dual infinite
wedge vector space $(\dirwed)^*$. To see this one uses the right-hand side of
Diagram~\ref{eq:diagX}, which lifts all instances of $X$ to a dense subset
of $\Xinf$. Thus a special case of our main theorem is that {\em higher
secant varieties of Sato's Grassmannian are defined set-theoretically
by finitely many $\Ginf$-orbits of equations}.
Another result that one easily derives from the Noetherianity of matrix
tuples is that, for any number $p$, the Cartesian product of $p$ copies
of Sato's Grassmannian, with the Zariski-topology, is equivariantly
Noetherian with respect to a single copy of $\Ginf$ acting diagonally.
We conclude with a number of open problems.
\begin{enumerate}
\item Is there an ideal-theoretic analogue of our main theorem?  This is
a very interesting, but apparently also very difficult question. It is
{\em not} true that the ideal of the limit of every bounded Pl\"ucker
variety is generated by finitely many $\Ginf$-orbits of equations. Indeed,
using the fact that the ideal of the finite-dimensional Grassmannian
in $\Wedge^p V$ is generated by a number of $\GL(V)$-modules in the
symmetric power $S^2 \Wedge^p V^*$ that is unbounded as $p$ and $\dim
V-p$ grow, one can show that the ideal of Sato's Grassmannian is not
generated by any finite number of $\Ginf$-orbits of polynomials. Thus
any progress on the ideal-theoretic question would require entirely
new ideas. This is different from the situation for ordinary tensors,
where at least a conjectural ideal-theoretic analogue of our theorems
exists \cite{Draisma11d}. A closely related question is how to generalise
Snowden's $\Delta$-modules \cite{Snowden13} to modules of syzygies for
Pl\"ucker varieties. An intermediate question, between our set-theoretic
result and an ideal-theoretic analogue, is whether a variant of our main
theorem holds in the setting of projective schemes; we do not know the
answer to this question either.

\item Most Pl\"ucker varieties of interest to us are constructed from
Grassmannians by operations such as joins and tangential varieties. These
are all bounded. Nevertheless, the restriction to bounded \pvs{} in
Theorems~\ref{thm:main1} and~\ref{thm:main2} seems somewhat {\em ad
hoc}. Are these theorems true for {\em unbounded} Pl\"ucker varieties,
as well? Our proof in Section~\ref{sec:noeth} uses the recursive nature
of Pfaffians in Section~\ref{sec:pfaff} in a fundamental manner. It is
conceivable that, for general Pl\"ucker varieties, this structure can be
replaced with techniques like {\em prolongation} that produce equations
for higher secant varieties given equations for lower secant varieties
\cite{Sidman06,Catalano96}. A second important ingredient in the proofs is the Noetherianity
of matrix tuples, Theorem~\ref{thm:main3}. We may need an analogue of this
for higher-dimensional tensors to deal with general Pl\"ucker varieties.

\item Is the ideal-theoretic version of Theorem~\ref{thm:main3} true? This
question seems easier than the preceding questions, and we conjecture
that the answer is positive. This question is essentially a question
about two-variable tca's in the sense of~\cite{Sam12,SamSnow12}.

\end{enumerate}


\newcommand{\etalchar}[1]{$^{#1}$}

\end{document}